\documentclass[11pt]{amsart}
 \usepackage{amsmath,amssymb,enumerate}
 \usepackage[all]{xy}
 \newtheorem{prop}{Proposition}[section]
 \newtheorem{coro}[prop]{Corollary}

 \newtheorem{lem}[prop]{Lemma}
 \newtheorem{rem}[prop]{Remark}

  \newtheorem{defi}[prop]{Definition}
 \newtheorem{theo}[prop]{Theorem}

 \newcommand{\R}{\mathbb R}
 \newcommand{\Q}{\mathbb Q}
 \newcommand{\C}{\mathbb C}
 \newcommand{\N}{\mathbb N}

 \newcommand{\e}{\varepsilon}
 \newcommand{\f}{\varphi}
 
 \newcommand{\p}{\psi}

 \newcommand \al {\alpha}

 \numberwithin{equation}{section}

 \begin{document}

  \title[Parabolic complex Monge-Amp\`ere equations]{Weak solutions to degenerate complex Monge-Amp\`ere Flows II}

\setcounter{tocdepth}{1}

  \author{ Philippe Eyssidieux, Vincent Guedj, Ahmed Zeriahi} 

\address{Universit\'e Joseph Fourier et Institut Universitaire de France, France}

\email{Philippe.Eyssidieux@ujf-grenoble.fr }

\address{Institut de Math\'ematiques de Toulouse et Institut Universitaire de France, France}

\email{vincent.guedj@math.univ-toulouse.fr}

\address{Institut de Math\'ematiques de Toulouse,   France}

\email{zeriahi@math.univ-toulouse.fr}

 \date{\today}

\begin{abstract}  
Studying the (long-term) behavior of the K\"ahler-Ricci flow on mildly singular varieties, one is naturally lead to study weak solutions of degenerate parabolic complex Monge-Amp\`ere equations.

  The purpose of this article, the second of a series on this subject, is to develop a viscosity theory for
degenerate complex Monge-Amp\`ere flows on compact K\"ahler manifolds.  Our general theory
allows in particular to define and study the (normalized) K\"ahler-Ricci flow on varieties with canonical singularities, generalizing results of Song and Tian.
\end{abstract}

 \maketitle

\tableofcontents

\newpage

\section*{Introduction}

 The study of the (long-term) behavior of the K\"ahler-Ricci flow on mildly singular varieties in relation to the Minimal Model Program was undertaken by J. Song and G. Tian \cite{ST1,ST2} and requires 
a theory of weak solutions for certain  degenerate parabolic complex Monge-Amp\`ere equations modelled on:
\begin{equation}\label{krf}
 \frac{\partial \phi}{\partial t}+\phi= \log \frac{(dd^c\phi)^n}{V}
\end{equation}
where $V$ is volume form and $\phi$ a $t$-dependant K\"ahler potential on a compact K\"ahler manifold. The approach in \cite{ST2} is to regularize the equation
and take limits of the solutions of the regularized equation with uniform
higher order estimates. But as far as the existence and uniqueness statements in \cite{ST2} are concerned, we believe that 
a zeroth order approach would be both simpler and more efficient.

There is a well established pluripotential 
theory of weak solutions to elliptic complex Monge-Amp\`ere equations,
following the pionnering work of Bedford and Taylor \cite{BT76,BT82} in the local case (domains in $\C^n$). 
A complementary viscosity approach has been developed only recently in \cite{HL09,EGZ11,W12, EGZ13} both in the local and the global case (compact K\"ahler manifolds).

Suprisingly no similar theory has ever been developed on the parabolic side.  The most significant reference for a parabolic flow of plurisubharmonic functions on a pseudoconvex domains
 is \cite{Gav} but the  flow studied there takes  the form 
\begin{equation}
 \label{gavflow}  \frac{\partial \phi}{\partial t}= ((dd^c\phi)^n )^{1/n} 
\end{equation}
which does not make sense in the global case. 
The purpose of this article, the second of a series on this subject, is to develop a viscosity theory for
degenerate complex Monge-Amp\`ere flows of the form 
(\ref{krfglobalal}).

   This article focuses on solving this problem on compact K\"ahler manifolds, while its companion \cite{EGZ14} is concerned with the local case (domains in $\C^n$).
More precisely  we study here the complex degenerate parabolic complex Monge-Amp\` ere flows
\begin{equation} \label{krfglobalal}
 e^{\dot{\f}_t + F (t,x,\f) } \mu (t,x) - (\omega_t +dd^c \f_t)^n = 0, 
\end{equation}
where
  \begin{itemize}
   \item $T \in ]0,+\infty]$;
  \item $\omega=\omega(t,x)$ is a continuous family of semi-positive $(1,1)$-forms on $X$, 
  \item $F (t,z,r)$ is continuous in $[0,T[ \times X \times \R$ and non decreasing in $r$,
  \item $\mu (t,z)\geq 0$ is a bounded continuous volume form on $X$,
   \item $\f : X_T :=  [0,T[ \times X \rightarrow \R$ is the unknown function, with $\f_t: = \f (t,\cdot)$. 
   \end{itemize}
Our plan is to adapt the viscosity approach developped by P.L.~Lions and al.
(see \cite{IL90,CIL92}) to the complex case, using the elliptic side of the theory which was developped in \cite{EGZ11}. It should be noted that the
method used in \cite{ST2} is a version of the classical PDE method of vanishing viscosity which was superseded by the theory of viscosity solutions.

\smallskip

We develop the appropriate definitions of (viscosity) subsolution, supersolution and solution
in the {\it first section}, and connect these to weak solutions of the K\"ahler-Ricci flow (normalized or not).

As is often the case in the viscosity theory, one of our main technical tools is the global comparison principle.
We actually establish several comparison principles in the {\it second section}, in particular
the following:

\medskip

\noindent{\bf{Theorem A.}} 
{\em  
Assume $t \mapsto \omega_t$ is non-decreasing or more generally regular in the sense of Definition \ref{defi:oregular}.
If $\f$ (resp. $\p$) is a bounded subsolution (resp. supersolution) 
of the above degenerate parabolic equation
then
$$
\max_{X_T} (\f - \p) \leq \max_{x \in X} (\f (0,x)- \p(0,x))_+,
$$

with the notation $a_+= \max(a,0)$, given $a$ a real number.
}

\smallskip We do not reproduce here the rather technical Definition \ref{defi:oregular} and refer the reader to section 2 instead. It is enough to record here that 
the condition is 
satisfied in all the situations arising from the  K\"ahler-Ricci flow with singularities.

\medskip

In the {\it third section} we specialise to the complex Monge-Amp\`ere flows arising in the study of 
the (normalized) K\"ahler-Ricci flow on mildly singular varieties, assuming
$F(t,x,\f)=\al \f$,
and 
$$
\mu(x,t)=e^{u(x)} f(x,t) dV(x),
$$
where $f>0$ is a positive continuous density and $u$ is  quasi-plurisubharmonic function that is
  {\it exponentially continuous} (i.e. such that $e^u$ is continuous).

We construct barriers at each point of the parabolic boundary and use the Perron method to eventually show the existence of a viscosity solution to the Cauchy problem:

\medskip

\noindent{\bf{Theorem B.}} 
{\em  
 Let  $\f_0$ be a continuous $\omega_0$-plurisubharmonic function on $X$
and assume $F,\mu$ are as above.
The Cauchy problem for the parabolic complex Monge-Amp\`ere equation  
with initial data $\f_0$ admits a unique viscosity solution $\f (t,x)$;
it is the upper envelope of all subsolutions. 
}

\medskip

We describe applications to the K\"ahler-Ricci flow on varieties with a definite first Chern class in the
{\it fourth section}, showing in particular a generalization of Cao's theorem \cite{Cao85}:

\medskip

\noindent{\bf{Theorem C.}} 
{\em  
Let $Y$ be a $\Q$-Calabi-Yau  variety and $S_0$ a positive closed current with continuous potentials 
representing a K\"ahler class $\al \in H^{1,1}(Y,\R)$.
The K\"ahler-Ricci flow 
$$
\frac{\partial \omega_t}{\partial t}=-\rm{Ric}(\omega_t)
$$
can be uniquely run  from $S_0$ and converges, as $t \rightarrow +\infty$, towards the unique 
Ricci flat K\"ahler-Einstein current $S_{KE}$ in $\al$.
}

\medskip

We similarly handle the case of canonical models:

\medskip

\noindent{\bf{Theorem D.}} 
{\em  Let $Y$ be a canonical model, i.e. a general type
projective algebraic variety with only canonical singularities 
such that $K_Y$ is nef and big and $S_0$ a positive closed current with continuous potential 
representing a K\"ahler class $\al \in H^{1,1}(Y,\R)$.
The normalized K\"ahler-Ricci flow 
$$
\frac{\partial \omega_t}{\partial t}=-\rm{Ric}(\omega_t)-\omega_t
$$
can be uniquely run from $S_0$ and exists for all time. Moreover $\omega_t$ has continuous potentials on 
$\R^+ \times Y$ and  converges, as $t \rightarrow +\infty$, towards the unique singular K\"ahler-Einstein metric $S_{KE}$ on $Y$.

}

\medskip

The convergence is here uniform at the level of potentials. The existence of $S_{KE}$
is due to \cite{EGZ09}, while the continuity of its potentials follows from the elliptic viscosity
approach of \cite{EGZ11}. 

We also show that the weak K\"ahler-Ricci flows considered
by Song-Tian \cite{ST2} (when the measure $\mu$ is sufficiently regular) coincide with ours,
this yields in particular the global continuity of the corresponding potentials which was not established in \cite{ST2}.

\medskip

We conclude by proposing a (discontinuous) viscosity approach to understanding the behavior
of the K\"ahler-Ricci flow over the flips. This requires to extend our results allowing for
discontinuous densities, a promising line of research for the future. We plan to come back to that question
in a forthcoming work. 

\medskip
We learnt of the possibility to use  viscosity solutions in the present context via a hint in \cite{CLN} where  no attempt to fully justify this technique was made.



\section{Complex Monge-Amp\`ere flows on compact manifolds}

\subsection{Geometrical Background for Complex Monge-Amp\`ere flows}
Let $X$ be a $n$-dimensionnal compact complex manifold and $n=\dim_{\C}(X)$.

The sheaf  $\mathcal{Z}^{1,1}_X$ of closed $(1,1)$-forms with continuous potential 
is, by definition,  the quotient  sheaf $\mathcal{Z}^{1,1}_{X} :=\mathcal{C}^0_{X}/\mathcal{PH}_{X}$ 
of the sheaf $\mathcal{C}^0_{X}$  of real valued continuous functions on $X$ by its subsheaf of pluriharmonic functions. Given a section of $\mathcal{Z}^{1,1}_X$
represented by a cocycle $(\phi_{\beta})_{\beta\in B}$ where $\phi_{\beta} \in C^0(U_{\beta}, \R)$ and  $\mathfrak{U} =(U_\beta)_{\beta\in B}$ is covering 
of $X$,  the currents $dd^c\phi_{\beta}$ defined on each $U_\beta$ glue into a closed current of bidegree $(1,1)$ on $X$. 
Global sections of $\mathcal{Z}^{1,1}_X$ form the background geometry in the study of degenerate complex Monge-Amp\`ere equations in the global case \cite{EGZ09}.

It is straightforward to formulate a parabolic analog. Let $T$ be positive real number and consider the manifold with boundary $X_T:= [0,T[ \times  X$ 
and denote by $\mathcal{C}^0_{X_T}$ the sheaf of continuous functions 
on $X_T$.  Denote by $\mathcal{PH}_{X_T/[0,T[} \subset \mathcal{C}^0_{X_T}$
the sheaf of continuous  real valued local functions whose restriction to each 
$X_t:=  \{t\} \times X \buildrel{i_t}\over{\hookrightarrow} X_T$ is a  pluriharmonic real valued function.

Say a germ of real valued  function  on $X_T$ is of class $C^{1,2}$ if it is of class $C^1$
admitting continuous second order partial derivatives in the $X$ direction. 

\begin{defi}\label{defi:backgd}
 A   family of closed real $(1,1)$-forms with continuous local potentials $\omega=(\omega_t)_{t\in [0,T[}$ is a global section of the sheaf 
$\mathcal{Z}^{1,1}_{X_T/[0,T[} :=\mathcal{C}^0_{X_T}/\mathcal{PH}_{X_T/[0,T[}$.

 A  continuous family of closed real $(1,1)$-forms $\omega=(\omega_t)_{t\in [0,T[}$ is a global section of the sheaf 
$C^0\mathcal{Z}^{1,1}_{X_T/[0,T[} :=\mathcal{C}^{1,2}_{X_T}/\mathcal{PH}_{X_T/[0,T[}$. 
\end{defi}

It is straighforward to see that there is a covering $\mathfrak{U} =(U_\beta)_{\beta\in B}$
of $X$ such that,  for every $\omega$ a global section of the sheaf $\mathcal{Z}^{1,1}_{X_T/\R} :=\mathcal{C}^0_{X_T}/\mathcal{PH}_{X_T/\R}$, 
  $\omega|_{[0,T[\times U_\beta}$ is represented by $\Phi_{\beta}\in C^0([0,T[ \times U_\beta, \R)$
such that $\Phi_{\beta\beta'}=\Phi_{\beta}-\Phi_{\beta'} \in C^0(U_{\beta \beta'})$ satisfies
$\partial\bar\partial \Phi_{\beta\beta'}=0$ and conversely such a cochain $(\Phi_{\beta})_{\beta\in B}$
defines a global section of $\mathcal{Z}^{1,1}_{X_T/[0,T[}$. The covering $\mathfrak{U}$ will be fixed throughout the article for technical reasons but  our results
will not depend on this choice. 

We have a natural map $\mathcal{Z}^{1,1}_{X_T/[0,T[} \to (i_t)_*\mathcal{Z}^{1,1}_X$ hence 
for every $t\in [0,T[$,  $\omega$ defines a closed real $(1,1)$-form with continuous potentials $\omega_t$  
by the prescription:
$$
\left(
\omega\mapsto \omega_t:=dd^c\Phi_{\beta}|_{\{t\} \times U_{\beta}}, \quad  H^0(X_T, \mathcal{Z}^{1,1}_{X_T/[0,T[}) \to H^{0}(X,\mathcal{Z}^{1,1}_X)
\right)
$$
and, taking the Bott Chern cohomology class $\{ \omega_t\}$  of $\omega_t$, we get a map
$$
\{ \relbar _t\}: H^0(X_T, \mathcal{Z}^{1,1}_{X_T/[0,T[}) \to H^{1,1}_{BC}(X,\R) \simeq H^1(X,\mathcal{PH}_X)
$$
such that $t\mapsto \{\omega_t\}$
is a continuous $H^{1,1}_{BC}(X,\R)$-valued function. 
The resulting  map $ H^0(X_T, \mathcal{Z}^{1,1}_{X_T/[0,T[})\to C^0([0,T[, H^{1,1}_{BC}(X,\R))$
is surjective. On the other hand, $ H^0(X_T, C^0\mathcal{Z}^{1,1}_{X_T/[0,T[})$ maps onto $  C^1([0,T[, H^{1,1}_{BC}(X,\R))$

Let us remark that the previous definitions make sense for normal complex spaces. However, for the formulation of the flows to be given in the next paragraph, it is necessary  to assume smoothness. 

\subsection{Complex Monge-Amp\`ere flows}

\begin{defi} \label{cmafdef} The complex Monge-Amp\`ere flow associated to $(\omega, \mu, F)$ where:
 \begin{itemize}
  \item $\omega \in  H^0(X_T, C^0\mathcal{Z}^{1,1}_{X_T/[0,T[})$ is  a continuous family of closed  real $(1,1)-$forms on $X$ in the sense of definition \ref{defi:backgd};
\item $0\leq \mu (t,x) \in C^0(X, \Omega^{n,n}_{X_T/[0,T[} )$ is a continuous family of volume forms on $X$, 
  \item $F : [0,T[ \times X \times \R \longrightarrow \R$ is continuous and non decreasing in the last variable, 
 \end{itemize}
is the following parabolic equation:
$$
 (\omega +dd^c\phi)^n= e^{\frac{\partial \phi}{\partial t} + F (t,x,\phi)} \mu,
\leqno (CMAF)_{X,\omega, \mu, F}  
$$
\end{defi}
Here: 
  $\phi : X_T  \longrightarrow \R$ is the unknown function.





A classical solution of a complex Monge-Amp\`ere flow is a function of class $C^{1,2}$  satisfying equation $(CMAF)_{X,\omega, \mu, F}$ pointwise in $]0,T[ \times X$.

Define $F_\beta(t,x,r):= F(t,x,r-\Phi_{\beta}(x))$ and  $\mu_{\beta}(t,x):= e^{-\frac{\partial \Phi_{\beta}(t,x)}{\partial t}} \mu(t,x)  $ where $(t,x) \in ]0,T[ \times U_\beta$. 

\begin{defi}
A function $\phi:X_T\to \R$ is a viscosity sub/super-solution of $(CMAF)_{\omega, \mu, F}$ iff, 
for each $\beta\in B$,  $\psi=\phi+\Phi_\beta$ is  a viscosity sub/super-solution of the following parabolic Monge-Amp\`ere equation:
 $$
(PMA)_{\mu_\beta, F_\beta} \quad (dd^c\psi)^n = e^{\frac{\partial \psi}{\partial t} + F_{\beta} (t,x,\psi)} \mu_\beta  \quad \mathrm{on} \quad ]0,T[ \times U_\beta.
$$ 

A bounded function $\phi:X_T \rightarrow \R$ is a viscosity solution of $(CMAF)_{\omega, \mu, F}$ iff
it is both a sub- and a supersolution  of $(CMAF)_{\omega, \mu, F}$. Such a function is continuous.

A bounded function $\phi:X_T \rightarrow \R$ is a discontinuous viscosity solution of $(CMAF)_{\omega, \mu, F}$ iff
its upper semi-continuous regularisation $\phi^*$ is  a subsolution  of $(CMAF)_{\omega, \mu, F}$ and its lower semicontinuous regularisation $\phi_*$ is a supersolution of $(CMAF)_{\omega, \mu, F}$ .
\end{defi}

If a viscosity solution (resp. subsolution, resp. supersolution) is of class $C^{1,2}$, it is a classical solution (resp. subsolution, resp. supersolution). 
We refer the reader to \cite{EGZ14} for a study of viscosity sub/super-solutions to local complex Monge-Amp\`ere flows. 

This definition is a special case of the general theory of \cite{CIL92} for viscosity solutions of general
degenerate elliptic/parabolic equations. The reader is referred to this survey article for the first principles 
of the theory. The basic fact we certainly need to  recall is that subsolutions are u.s.c whereas supersolutions are l.s.c.

Recall that if $\omega$ is a closed smooth $(1,1)$-form in $X$, then the complex Monge-Amp\`ere  measure $(\omega + dd^c \p)^n$ 
is well-defined {\it in the pluripotential sense} for all bounded $\omega$-psh functions $\p$ in $X$, as follows from the work of  Bedford and Taylor (see \cite{BT76}, \cite{GZ05}) and  viscosity 
(sub)solutions of complex Monge-Amp\`ere equations can be interpreted in pluripotential theory as explained in \cite[Theorem 1.9]{EGZ11}. 

On the other hand, it is not clear to us how to interpret viscosity solutions of Complex Monge-Amp\`ere flows in terms of pluripotential theory. We will note however the following useful lemma
which follows easily from \cite[Theorem 1.9, Lemma 4.7]{EGZ11}. 

\begin{lem}\label{lem:pluripotvisc}
 Let $u\in C^0(X_T, \R)$ such that:
 \begin{itemize}
  \item $u$ admits a continuous partial derivative $\partial_t u$ with respect to $t$,
  \item For every $ t\in ]0,T[$, the restriction $u_t$ of $u$ to $X_t$  satisfies $$(\omega_t+dd^c u_t)^n \ge e^{\partial_t u+F(t,x,u)}\mu(t,x)$$ in the pluripotential sense on $X_t$.
 \end{itemize}

Then $u$ is a subsolution of $(CMAF)_{\omega, \mu, F}$.

 Let $v\in C^0(X_T, \R)$ such that:
 \begin{itemize}
 \item The restriction $v_t$ of $v$ to $X_t$ is $\omega_t$-psh,
  \item $v$ admits a continuous partial derivative $\partial_t v$ with respect to $t$,
  \item There exists a continuous function $w$ such that, for every $ t\in ]0,T[$, the restriction $v_t$ to $X_t$  satisfies 
  $$
  (\omega_t+dd^c v_t)^n \leq e^w \mu(t,x)
  $$ 
  in the pluripotential sense on $X_t$
  and ${\partial_t v_t+F(t,x,v_t)} \ge w$.
 \end{itemize}

Then $v$ is a supersolution of $(CMAF)_{\omega, \mu, F}$. 
\end{lem}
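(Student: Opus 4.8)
\emph{Proof plan.} The plan is to prove both assertions chart by chart, reducing the parabolic viscosity test to a frozen-time elliptic one. Fix $\beta\in B$; by definition one has to show that $\psi:=u+\Phi_\beta$ (resp. $\psi:=v+\Phi_\beta$) is a viscosity sub- (resp. super-) solution of $(PMA)_{\mu_\beta,F_\beta}$ on $]0,T[\times U_\beta$. I will use repeatedly that on $U_\beta$ one has $\omega_t=dd^c_x\Phi_\beta(t,\cdot)$, while $\mu_\beta=e^{-\partial_t\Phi_\beta}\mu$ and $F_\beta(t,x,r+\Phi_\beta)=F(t,x,r)$, so that along $\psi$ the factor $e^{-\partial_t\Phi_\beta}$ in $\mu_\beta$ exactly absorbs the extra term $\partial_t\Phi_\beta$ coming from $\partial_t\psi=\partial_t u+\partial_t\Phi_\beta$ (resp. $\partial_t v+\partial_t\Phi_\beta$), and $F_\beta(t,x,\psi)=F(t,x,u)$ (resp. $F(t,x,v)$). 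Note also that the pluripotential inequalities in the hypotheses presuppose $u_t,v_t\in\mathrm{PSH}(X,\omega_t)$.

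For the subsolution statement I would take a $C^{1,2}$ germ $q$ touching $\psi$ from above at a point $(t_0,x_0)\in\,]0,T[\times U_\beta$. Since $u$ and $\Phi_\beta$ are $C^1$ in $t$, the function $t\mapsto q(t,x_0)-\psi(t,x_0)$ is $C^1$ near $t_0$ with an interior minimum there, so $\partial_tq(t_0,x_0)=\partial_tu(t_0,x_0)+\partial_t\Phi_\beta(t_0,x_0)$. Freezing $t=t_0$, the function $x\mapsto q(t_0,x)-\Phi_\beta(t_0,x)$ is of class $C^2$ and touches $u_{t_0}$ from above at $x_0$. Since $u_{t_0}$ is $\omega_{t_0}$-psh and $(\omega_{t_0}+dd^cu_{t_0})^n\ge g_{t_0}$ in the pluripotential sense, where $g_{t_0}(x):=e^{\partial_tu(t_0,x)+F(t_0,x,u(t_0,x))}\mu(t_0,x)$ is a continuous nonnegative volume form on $X$ (by continuity of $\partial_t u$, $F$, $u$, $\mu$), \cite[Theorem 1.9, Lemma 4.7]{EGZ11} shows that $u_{t_0}$ is a viscosity subsolution of $(\omega_{t_0}+dd^c\,\cdot\,)^n=g_{t_0}$ on $X$. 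Testing this with $q(t_0,\cdot)-\Phi_\beta(t_0,\cdot)$ and using $\omega_{t_0}=dd^c_x\Phi_\beta(t_0,\cdot)$ gives $dd^c_xq(t_0,\cdot)(x_0)\ge 0$ and $(dd^c_xq(t_0,\cdot))^n(x_0)\ge g_{t_0}(x_0)$; by the cancellation above, $g_{t_0}(x_0)=e^{\partial_tq(t_0,x_0)+F_\beta(t_0,x_0,\psi(t_0,x_0))}\mu_\beta(t_0,x_0)$, which is exactly the viscosity subsolution inequality for $(PMA)_{\mu_\beta,F_\beta}$.

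The supersolution statement is entirely parallel: here the hypotheses give $v_t\in\mathrm{PSH}(X,\omega_t)$ with $(\omega_t+dd^cv_t)^n\le e^{w(t,\cdot)}\mu(t,\cdot)$ in the pluripotential sense, so \cite[Theorem 1.9, Lemma 4.7]{EGZ11} makes $v_{t_0}$ a viscosity supersolution of $(\omega_{t_0}+dd^c\,\cdot\,)^n=e^{w(t_0,\cdot)}\mu(t_0,\cdot)$ on $X$. For a $C^{1,2}$ germ $q$ touching $v+\Phi_\beta$ from below at $(t_0,x_0)$, the same freezing argument yields $\partial_tq(t_0,x_0)=\partial_tv(t_0,x_0)+\partial_t\Phi_\beta(t_0,x_0)$ and, in the only case requiring an argument ($dd^c_xq(t_0,\cdot)(x_0)\ge 0$), the inequality $(dd^c_xq(t_0,\cdot))^n(x_0)\le e^{w(t_0,x_0)}\mu(t_0,x_0)$; combining with $\partial_tv(t_0,x_0)+F(t_0,x_0,v(t_0,x_0))\ge w(t_0,x_0)$ and the same absorption of $e^{\pm\partial_t\Phi_\beta}$ concludes. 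The bookkeeping with $\Phi_\beta$ and the exponentials is routine; the step I expect to require real care is the passage from a joint $C^{1,2}$ test function on $X_T$ to an admissible $C^2$ elliptic test function on a time slice, together with the first-order matching of $\partial_t q$ — this is what legitimizes applying \cite{EGZ11} slice by slice, and it relies crucially on $u$, $v$ and $\Phi_\beta$ being $C^1$ in $t$.
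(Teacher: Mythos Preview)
Your argument is correct and is precisely the approach the paper intends: the lemma is stated as following ``easily from \cite[Theorem~1.9, Lemma~4.7]{EGZ11}'', and you have supplied the routine reduction---freezing time via the $C^1$-in-$t$ hypothesis to match $\partial_t q$ with $\partial_t\psi$, then invoking the elliptic pluripotential/viscosity equivalence of \cite{EGZ11} on the slice $X_{t_0}$---together with the bookkeeping in the local potentials $\Phi_\beta$. The only point worth flagging is your parenthetical remark that the pluripotential hypothesis on $u_t$ implicitly forces $u_t\in\mathrm{PSH}(X,\omega_t)$; this is indeed needed for the Bedford--Taylor measure to be defined and for \cite{EGZ11} to apply, and is consistent with how the lemma is used later in the paper.
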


We also need to record a
basic property  from \cite{EGZ14}. 

\begin{prop}
Let $\phi$ be a  viscosity subsolution of $(CMAF)_{\omega, \mu, F}$. For each $t\in ]0,T[$, we have 
$\phi_t \in \mathrm{PSH}(X, \omega_t)$. 
\end{prop}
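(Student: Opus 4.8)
The plan is to reduce to the local statement of \cite{EGZ14} applied to $\psi:=\phi+\Phi_\beta$ on each chart $[0,T[\times U_\beta$; since being $\omega_{t_0}$-psh is a local condition and $\mathfrak U$ covers $X$, it is enough to prove, for a fixed $t_0\in\,]0,T[$ and a fixed $\beta\in B$, that the upper semicontinuous function $\psi_{t_0}:=\phi_{t_0}+\Phi_\beta(t_0,\cdot)$ is plurisubharmonic on $U_\beta$ — recall that $\psi$ is, by the very definition of a viscosity subsolution of $(CMAF)_{\omega,\mu,F}$, a viscosity subsolution of $(PMA)_{\mu_\beta,F_\beta}$ on $]0,T[\times U_\beta$. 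By the classical viscosity characterisation of plurisubharmonicity, this amounts to checking that $dd^c q(x_0)\ge 0$ whenever $x_0\in U_\beta$, $q\in C^2$ near $x_0$, and $\psi_{t_0}-q$ has a local maximum at $x_0$. So I would start from such a $q$ and aim to produce that inequality from the parabolic subsolution property.

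First I would reduce to a strict maximum: replacing $q$ by $q+|x-x_0|^4$ leaves $dd^c q(x_0)$ unchanged and makes the maximum strict, say $\psi_{t_0}-q<m$ on $\bar B(x_0,\rho)\setminus\{x_0\}$ with $m:=\psi_{t_0}(x_0)-q(x_0)$ and $\bar B(x_0,\rho)\Subset U_\beta$. Next, I would localise in time by penalisation: pick $r>0$ with $[t_0-r,t_0+r]\Subset\,]0,T[$, put $K:=[t_0-r,t_0+r]\times\bar B(x_0,\rho)$, and for $\delta>0$ let $(t_\delta,x_\delta)\in K$ be a maximum point of the u.s.c. function
$$
\Psi_\delta(t,x):=\psi(t,x)-q(x)-\tfrac1\delta(t-t_0)^2
$$
(it exists by compactness of $K$). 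Comparing $\Psi_\delta(t_\delta,x_\delta)$ with $\Psi_\delta(t_0,x_0)=m$ and using that $\psi-q$ is bounded above on $K$ gives the standard penalisation estimate $\tfrac1\delta(t_\delta-t_0)^2\to0$; then any limit point $(t_0,\bar x)$ of $(t_\delta,x_\delta)$ satisfies $\psi_{t_0}(\bar x)-q(\bar x)\ge m$ by upper semicontinuity of $\psi$, whence $\bar x=x_0$ by strictness. So $(t_\delta,x_\delta)\to(t_0,x_0)$, and for $\delta$ small this is an interior point of $\,]0,T[\times U_\beta$.

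At that point $Q_\delta(t,x):=q(x)+\tfrac1\delta(t-t_0)^2$ is a $C^\infty$ (in particular $C^{1,2}$) function with $\psi-Q_\delta$ having a local maximum, so I would feed $Q_\delta$ into the viscosity subsolution inequality for $(PMA)_{\mu_\beta,F_\beta}$ at $(t_\delta,x_\delta)$. Because $\partial_tQ_\delta(t_\delta,x_\delta)=\tfrac2\delta(t_\delta-t_0)$ is finite, and because in the viscosity formalism the complex Monge-Amp\`ere operator is the lower semicontinuous extension of $\log\det$ from the cone of semipositive forms to all Hermitian forms, this inequality can hold only if $dd^c_xQ_\delta(t_\delta,x_\delta)\ge 0$, i.e. $dd^c q(x_\delta)\ge0$; letting $\delta\to0$ and using $q\in C^2$ gives $dd^c q(x_0)=\lim_\delta dd^c q(x_\delta)\ge0$. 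This would finish the argument: $\psi_{t_0}\in\mathrm{PSH}(U_\beta)$ for every $\beta$, hence $\phi_{t_0}\in\mathrm{PSH}(X,\omega_{t_0})$.

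The only non-routine ingredient is the last implication — that a viscosity subsolution inequality \emph{forces} the spatial complex Hessian of an upper test function to be semipositive — which is precisely the local fact recorded in \cite{EGZ14} (its elliptic counterpart being \cite[Theorem 1.9]{EGZ11}), and it is the reason one must set up the complex Monge-Amp\`ere operator via the l.s.c. envelope of $\log\det$ outside the positive cone rather than via $\det$ itself. Everything else — the quartic perturbation, the time-penalisation, and passing to the limit — is standard viscosity bookkeeping, the penalisation serving only to transport this intrinsically spatial information down to a single time slice.
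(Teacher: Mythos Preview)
Your argument is correct and is precisely the standard time-penalisation reduction that the paper defers to \cite{EGZ14} for (the paper gives no proof of its own here, merely recording the statement as a ``basic property from \cite{EGZ14}''). The key point you identify---that the very definition of viscosity subsolution for $(PMA)_{\mu_\beta,F_\beta}$ forces $dd^c q\ge 0$ for any upper parabolic test function, because the Hamiltonian is set to $+\infty$ outside the closed positive cone and hence so is its l.s.c.\ envelope there---is exactly right, and holds regardless of whether $\mu_\beta$ vanishes at the point; the rest is indeed routine.
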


Let us remark  that,
for these applications of the results of \cite{EGZ14} on weak solutions to local complex Monge-Amp\`ere flows, it is actually enough
to assume the technical condition that $\Phi_{\beta}$ is continuous and locally  Lipschitz  in  the  time variable  (hence
$\frac{\partial}{\partial t}  \Phi_{\beta}$  exists  a.e. ) and  $\mu$ is only measurable but:

\begin{equation} 
\mu_{\beta}:= e^{-\frac{\partial \Phi_{\beta}}{\partial t}} \mu |_{U_\beta} {\mathrm{ \ is \ continuous.}}   \label{vreg}
\end{equation}

We note that this condition allows the function $t \mapsto \{\omega_t\}$ be Lipschitz  non-differentiable. However,
we are not able to prove global results unless the following stronger regularity condition holds:

\begin{equation} 
\Phi_{\beta}\in C^{1,2}, \ \mu {\mathrm{ \ is \ continuous.}}  \label{wreg},
\end{equation}

which is indeed what Definition \ref{cmafdef} requires.

\subsection{The K\"ahler Ricci flow with canonical singularities}

\subsubsection*{Normalized K\"ahler-Ricci flow}

Let us now interpret in the present framework the K\"ahler-Ricci flow on varieties with canonical singularities that was 
defined in \cite{ST2}. 

Let $Y$ be an irreducible normal compact K\"ahler space with only canonical singularities and $n=\dim_{\C}(Y)$. 
Let $\pi: X\to Y$
be a log-resolution, i.e.: $X$ is a compact K\"ahler manifold, $\pi$ is a bimeromorphic projective morphism
and $\mathrm{Exc}(\pi)$ is a  divisor with simple normal crossings. Denote by $\{E\}_{E\in \mathcal{E}}$ the family
of the irreducible components of  $\mathrm{Exc}(\pi)$.  With this notation, one has furthermore
$$K_X\equiv \pi^* K_Y + \sum_E a_E  E  $$
where $a_E \in \Q_{\ge 0}$,  $K_Y$ denote the first Chern class in Bott-Chern cohomology of the $\Q$-line bundle  $O_Y(K_Y)$ on $Y$  whose restriction to the smooth locus is 
the 
line bundle whose sections are holomorphic top dimensional  forms (or according to the standard terminology canonical forms), $K_X$ the canonical class of $X$ 
and $E$ also denotes with a slight abuse of language the cohomology class of $E$.
This means that for every non-vanishing locally defined multivalued canonical form $\eta$ defined over $Y$, 
the holomorphic multivalued canonical form $\pi^* \eta$ on $X$ has a zero of order $a_E$ along $E$. 

Denote by $\mathcal{K}(Y) \subset H^{1}(Y, \mathcal{PH}_Y)$ the
open convex cone of K\"ahler classes and
let $\omega_0$ be a semi-K\"ahler form  on $Y$ with $C^2$ potential (see \cite{EGZ09} for the definitions of K\"ahler metrics 
and variants on normal complex spaces) such that $\{\omega_0\}+ \epsilon K_Y \in \mathcal{K}(Y)$ for $1\gg \epsilon >0$.  Assume $h$ is a 
smooth hermitian metric on the holomorphic $\Q$-line bundle underlying $O_Y(K_Y)$. 
Then 
$$
\chi:=-dd^c \log h
$$
is a smooth representative of $K_Y\in H^{1}(Y, \mathcal{PH}_Y)$. 
\smallskip

We are going to study the existence and the long term behavior of the normalized K\"ahler-Ricci flow (NKRF for short) on
$Y$,
$$
\frac{\partial \omega_t}{\partial t}=-\rm{Ric}(\omega_t)-\omega_t,
$$
starting from the  initial data $\omega_0$. At the cohomological level, this yields
a first order ODE showing that the cohomology class of $\omega_t$ evolves as
$$
\{ \omega_t \} =e^{-t} \{ \omega_0 \} +(1-e^{-t} ) K_Y.
$$

We thus let $T_{max}\in ]0,+\infty]$ be defined by
$$
T_{max} :=\sup\{ t>0,  \ e^{-t}\{\omega_0\}+(1-e^{-t}) K_{Y} \in \mathcal{K}(Y) \}
$$
and denote by 
the following $C^1$ in $t\in [0,T[$ relative semi-K\"ahler form on $Y_T$,
$$
\chi_t= e^{-t} \chi_0+ (1-e^{-t}) \chi, 
$$
where $\chi_0$ is a smooth K\"ahler representative of the K\"ahler class ${\omega_0}$ and $\chi$ is a smooth representant of the canonical class $K_Y$.

Then $\omega=(\omega_t)_{t\in [0,T[}$ the solution of the normalized K\"ahler-Ricci flow can be written as $\omega_t = \chi_t + dd^c \phi_t$, where $\phi : Y_T \longrightarrow \R$ is continuous in $Y_T$.

We now define $$\omega_{NKRF}:=\pi^*\omega \in H^0(X, \mathcal{Z}^{1,1}_{X_T/[0,T[})$$ and  $$\mu_{NKRF}= c_n
\frac{\pi^*\eta \wedge \overline{ \pi^*\eta}}{ \pi^* \| \eta \|^2_h} \in C^0(X,\Omega^{n,n}_X)$$
which we view as a  continuous element of $C^0(X_T, \Omega^{n,n}_{X_T/[0,T[} )$ and $c_n$ is the unique
 complex number of modulus $1$ such that the expression is positive. 
As the notation suggests, $\mu_{NKRF}$ is independent of the auxiliary multivalued holomorphic form $\eta$ but depends on $h$ an auxiliary smooth metric on $Y$.
When it will be necessary to display this dependance we shall write $\mu_{NKRF}(h)$. 
Since the local potentials of $\chi$ are of class $C^{\infty}$ the pair $(\omega_{NKRF}, \mu_{NKRF})$ satisfies 
the requirements of Definition \ref{cmafdef}. 

In local coordinates $\mu_{NKRF}$ has a continuous density of the form 
$$
v_{NKRF}=\prod_E |f_E|^{2a_E} v
$$ where $v>0$ is smooth and 
$f_E$ is an equation of $E$ in these local coordinates. 

\begin{lem} \label{reso} 
Every viscosity solution $\phi_{\pi}$  of the Monge-Amp\`ere flow
$$(CMAF)_{X,\omega_{NKRF}, \mu_{NKRF}, r} \ (\omega_{NKRF}+dd^c\phi)^n=e^{\phi+\frac{\partial\phi}{\partial t}} \mu_{NKRF} $$  
with Cauchy datum $ \phi_0$ descends to $Y_T$, i.e.: $\phi_{\pi}=\pi^*{\phi}$
and the element $\omega+dd^c\phi \in H^0(Y, \mathcal{C}^0_{Y_T}/\mathcal{PH}_{Y_T/[0,T[})$  obtained this way is independent
of $\pi$ and of $h$. 
\end{lem}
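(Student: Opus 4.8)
The plan is to exploit the fact that $\pi$ is a biholomorphism over the dense Zariski-open set $U := Y_{\mathrm{reg}} \setminus \pi(\mathrm{Exc}(\pi))$, together with the continuity of viscosity solutions and a Hartogs-type extension argument to control what happens over the exceptional locus. First I would observe that $\omega_{NKRF} = \pi^*\omega$ has the property that, on each chart $[0,T[\times U_\beta$ with $U_\beta$ contained in the preimage of a coordinate ball in $Y$, the local potential $\Phi_\beta$ is of the form $\Phi_\beta = \pi^*\Psi_\gamma + (\text{pluriharmonic in } X\text{-direction})$ for a potential $\Psi_\gamma$ of $\omega$ upstairs on $Y$; similarly the density $v_{NKRF} = \prod_E |f_E|^{2a_E} v$ visibly pulls back (away from a set where it vanishes, but the formula persists by continuity). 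Hence over $[0,T[\times \pi^{-1}(U)$ the equation $(CMAF)_{X,\omega_{NKRF},\mu_{NKRF},r}$ is literally the pullback under the biholomorphism $\pi$ of the corresponding flow downstairs on $[0,T[\times U$. Since viscosity sub/supersolutions transform correctly under biholomorphisms (a test jet upstairs is the pullback of a test jet downstairs, and the Monge-Amp\`ere operator is biholomorphically invariant), $\phi_\pi$ restricted to $[0,T[\times\pi^{-1}(U)$ is the pullback of a viscosity solution $\phi^0$ on $[0,T[\times U$ with Cauchy datum $\phi_0|_U$.

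Next I would upgrade this to a global descent. The map $\pi$ is proper with connected fibers (as $Y$ is normal, by Zariski's main theorem), so $\pi_*\mathcal{O}_X = \mathcal{O}_Y$, and a continuous function on $X_T$ that is constant on the fibers of $\pi$ over the generic point descends to a continuous function on $Y_T$ provided it is constant on \emph{all} fibers. The point is therefore to show $\phi_\pi$ is constant along each fiber of $\pi$. Here I would use that $\phi_{\pi,t}$ is $\omega_{NKRF,t}$-psh for each fixed $t$ (by the Proposition quoted before Lemma \ref{reso}, applied on $X$), hence in particular plurisubharmonic along the fibers of $\pi$ contracted to a point; a psh function that is bounded on a connected compact analytic set is constant there. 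Combined with agreement over the dense open set, this forces $\phi_{\pi,t} = \pi^*\phi^0_t$ on all of $X$, with $\phi^0_t$ the continuous extension across $Y\setminus U$. Taking $t$ into account is routine since all maps and functions are continuous in $t$. This produces the continuous $\phi$ on $Y_T$ with $\phi_\pi = \pi^*\phi$.

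For the independence of $\pi$: given two log-resolutions, dominate both by a common third one $X'$ using that log-resolutions form a directed system; by the previous paragraph the solution on $X'$ is the pullback of the solutions on each of $X_1, X_2$, which both therefore descend to the \emph{same} function on $Y_T$ (namely the one characterized on the dense open $U$ as the unique viscosity solution downstairs with datum $\phi_0|_U$, by the uniqueness part of the comparison principle, Theorem A). Independence of $h$: changing $h$ to $h' = e^{-\rho}h$ with $\rho$ smooth multiplies $\mu_{NKRF}$ by $e^{\pi^*\rho}$ and replaces $\chi$ by $\chi + dd^c\rho$, hence $\omega_{NKRF}$ by $\omega_{NKRF} + dd^c(\pi^*\rho)$; one checks the flow $(CMAF)$ is invariant under the simultaneous substitution $\omega \mapsto \omega + dd^c\rho$, $\mu \mapsto e^{\rho}\mu$, $\phi\mapsto \phi - \rho$ combined with the initial datum shift — or more cleanly, both choices give elements of $H^0(Y,\mathcal{C}^0_{Y_T}/\mathcal{PH}_{Y_T/[0,T[})$ representing $\omega + dd^c\phi$ that agree on $U$, hence everywhere by continuity.

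The main obstacle I anticipate is the second step: controlling $\phi_\pi$ over the exceptional fibers. The $\omega_{NKRF,t}$-psh-ness gives constancy along fibers only after knowing $\phi_\pi$ is bounded (which is part of the hypothesis, viscosity solutions here being bounded by definition) and that $\omega_{NKRF,t}$ is \emph{semi}-positive with a potential that is itself constant along the contracted fibers — this last point needs the explicit shape of $\omega_{NKRF} = \pi^*\omega$, i.e. that its local potentials are genuinely pullbacks, not merely that $dd^c$ of them is a pullback current. I would be careful to extract this from Definition \ref{defi:backgd} and the cocycle description of $\omega_{NKRF}$ rather than taking it for granted, since the whole descent hinges on it.
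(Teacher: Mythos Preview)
Your proposal is correct and follows essentially the same route as the paper. The paper's proof is a two-line sketch: independence of $h$ is declared obvious, and the descent is attributed to the quasi-plurisubharmonicity of viscosity subsolutions (from \cite{EGZ14}) together with the constancy-along-fibers argument from \cite{EGZ09} for the elliptic case. You have unpacked exactly these ingredients: $\omega_{NKRF,t}$-plurisubharmonicity of $\phi_{\pi,t}$ (the Proposition preceding the lemma), restriction to fibers where $\omega_{NKRF,t}=\pi^*\chi_t$ vanishes so that $\phi_{\pi,t}$ is genuinely psh and hence constant on each connected compact fiber, and continuous extension across $Y\setminus U$. Your caution in the final paragraph about needing the local potentials of $\omega_{NKRF}$ to be honest pullbacks is well placed and is precisely what makes the fiberwise argument go through. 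One small remark: your appeal to Theorem~A for the independence-of-$\pi$ step is logically forward-referencing (the comparison principle is proved in Section~2), but the paper's own deferral to \cite{EGZ09} is in the same spirit, and in any case the cleaner argument you also give --- dominating two resolutions by a common third and using compatibility of descent --- already suffices without invoking uniqueness on $Y$.
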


\begin{proof} The fact that $\omega+dd^c\phi $ does not depend on the auxiliary 
hermitian metric $h$ is obvious. 
The rest follows from the quasi-plurisubharmonicity of viscosity (sub)solutions established in \cite{EGZ14},
together with the argument in \cite{EGZ09} for the static case which implies that $\phi_{\pi}$ is constant along the fibers of $\pi$. 
This also works for subsolutions. 
\end{proof}

 \begin{defi}\label{def:nkrf}
We say that $\omega+dd^c\phi \in H^0(Y, \mathcal{C}^0_{Y_T}/\mathcal{PH}_{Y_T/[0,T[})$ as in Lemma \ref{reso} is a solution of the normalized K\"ahler-Ricci flow on $Y$ starting at $\omega_0$. 
 \end{defi}

Lemma \ref{reso} implies that the notion does not depend on the choice of the log resolution $\pi:Y\to X$. 

A basic observation is that the normalized K\"ahler-Ricci flow can also be formulated as 
$(CMAF)_{\omega', \mu'}$ where
$$
\omega'=\pi^*\omega+dd^c\Psi,  \; \; \mu'=e^{-\Psi-\frac{\partial \Psi}{\partial t}}\mu_{NKRF},
$$ 
 $\Psi \in C^{\infty}(X_T,\R)$ being arbitrary.

\subsubsection*{Change of time variable}
The important case of $(CMAF)_{\omega, \mu, F}$ when dealing with the K\"ahler-Ricci flow is thus when $F (t,x,r) = \alpha r$. Then 
the sign of $\al$ is crucial for the long term behavior of $\f_t$. We however observe that it
plays no role for finite time:

\begin{lem}\label{chgvar}
 The solutions of the flows:
\begin{eqnarray*}
 (\omega_1(t)+dd^c\phi)^n&=&e^{\alpha \phi +\frac{\partial \phi}{\partial t}} \mu_1(t), \ t\in[t_0, t_1[ \\
(\omega_0(s)+dd^c\psi)^n&=&e^{\frac{\partial \psi}{\partial s}} \mu_0(s), \ s\in[s_0,s_1[
\end{eqnarray*}
coincide if we do the following change of variables (when $\alpha > 0)$: 
$$
s- s_0 =e^{\alpha(t-t_0)}-1, \ \   \psi(s)=(1 + s-s_0)\phi \left(t_0+
\frac{\log(1 + s-s_0)}{\alpha } \right)
$$ 
where
\begin{eqnarray*}
\omega_0(s)&=& (1 + s-s_0) \, \omega_1 \left(t_0+\frac{\log(1 + s-s_0)}{\alpha } \right),\\
 \mu_0(s)&=&(1 + s-s_0)^n \, \mu_1 \left(t_0+\frac{\log(1 + s-s_0)}{\alpha } \right).
\end{eqnarray*}    
\end{lem}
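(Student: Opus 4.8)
The plan is to verify directly that the proposed change of variables transforms a solution of one flow into a solution of the other, by computing how each term in the equation transforms. First I would set up notation: write $\tau = \tau(s) := t_0 + \frac{\log(1+s-s_0)}{\alpha}$, so that $s - s_0 = e^{\alpha(\tau - t_0)} - 1$ and $\frac{d\tau}{ds} = \frac{1}{\alpha(1+s-s_0)}$. Let $\lambda(s) := 1 + s - s_0 = e^{\alpha(\tau-t_0)}$, so $\psi(s,x) = \lambda(s)\,\phi(\tau(s),x)$, $\omega_0(s) = \lambda(s)\,\omega_1(\tau(s))$ and $\mu_0(s) = \lambda(s)^n \mu_1(\tau(s))$. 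Then I would compute the three ingredients of the target equation $(\omega_0(s)+dd^c\psi)^n = e^{\partial_s\psi}\mu_0(s)$ in terms of the source flow evaluated at $\tau$.

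The key computations are, in order: (i) the Hessian term, $\omega_0(s) + dd^c_x\psi = \lambda(s)\bigl(\omega_1(\tau) + dd^c_x\phi(\tau)\bigr)$ since $dd^c$ acts only in the $x$-variables and $\lambda$ depends only on $s$; hence $(\omega_0(s)+dd^c\psi)^n = \lambda(s)^n (\omega_1(\tau)+dd^c\phi(\tau))^n$; (ii) the time derivative, $\partial_s\psi = \lambda'(s)\phi(\tau) + \lambda(s)\,\partial_t\phi(\tau)\cdot\tau'(s)$; using $\lambda'(s) = 1$ and $\tau'(s) = \frac{1}{\alpha\lambda(s)}$, this equals $\phi(\tau) + \frac{1}{\alpha}\partial_t\phi(\tau)$. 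Now invoke the source equation $(\omega_1(\tau)+dd^c\phi(\tau))^n = e^{\alpha\phi(\tau) + \partial_t\phi(\tau)}\mu_1(\tau)$, so that
$$
(\omega_0(s)+dd^c\psi)^n = \lambda(s)^n e^{\alpha\phi(\tau)+\partial_t\phi(\tau)}\mu_1(\tau).
$$
It remains to check that the exponent matches: we need $\lambda(s)^n e^{\alpha\phi(\tau)+\partial_t\phi(\tau)}\mu_1(\tau) = e^{\partial_s\psi}\mu_0(s) = e^{\phi(\tau) + \frac{1}{\alpha}\partial_t\phi(\tau)}\lambda(s)^n\mu_1(\tau)$. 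This does not look right with a bare $\alpha$, so I would track constants carefully: the correct reading is that after the substitution the source equation should be rescaled by $\alpha$ in the time variable, i.e. one works with $\tilde\phi(\tilde t,x) = \phi(t_0 + \tilde t/\alpha, x)$ first to reduce to $\alpha = 1$, or equivalently one absorbs the factor by noting $\partial_s\psi = \phi(\tau) + \partial_{\tilde t}\tilde\phi$ where $\tilde t = \alpha(\tau - t_0) = \log\lambda(s)$ and $\partial_s(\log\lambda(s)) = 1/\lambda(s)$; then the exponents agree on the nose. The bookkeeping of which time variable carries the factor $\alpha$ is the one place demanding care.

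The statement is an equivalence (``coincide''), so I would also note the converse direction, which is automatic since the change of variables $s \leftrightarrow t$ is a smooth bijection of $[t_0,t_1[$ onto $[s_0,s_1[$ (for $\alpha > 0$) with smooth inverse, and all the formulas are manifestly invertible: running the same computation backwards recovers the $(\omega_1,\mu_1)$-flow from the $(\omega_0,\mu_0)$-flow. Finally, I would remark that the argument is entirely pointwise in $x$ and uses $dd^c$ only in the spatial directions, so it applies verbatim to viscosity sub/supersolutions as well as classical ones: composing a test function with the (smooth, monotone in $s$) change of variables and rescaling by $\lambda$ preserves the sub/supersolution property, because the order relation between a function and a $C^{1,2}$ test function touching it from above/below is preserved under multiplication by the positive function $\lambda(s)$ and reparametrisation of time. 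I expect no serious obstacle beyond the constant-tracking in step (ii); everything else is a direct substitution.
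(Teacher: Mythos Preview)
Your approach is exactly what the paper intends: it simply says ``The proof is a straightforward computation'' and gives no details, so your direct substitution is the right (and only reasonable) route.

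Your constant-tracking worry is legitimate and your diagnosis is correct. With $\lambda(s)=1+s-s_0$ and $\tau'(s)=1/(\alpha\lambda)$ one gets $\partial_s\psi=\phi(\tau)+\tfrac{1}{\alpha}\partial_t\phi(\tau)$, whereas the source exponent is $\alpha\phi(\tau)+\partial_t\phi(\tau)$; these agree only when $\alpha=1$. So the formulas as printed in the lemma are literally correct for $\alpha=1$, and for general $\alpha>0$ one must either (a) first rescale time $t\mapsto t_0+(t-t_0)/\alpha$ to reduce to $\alpha=1$, as you propose, or (b) adjust the change of variables (e.g.\ take $s-s_0=\alpha^{-1}(e^{\alpha(t-t_0)}-1)$, so that $\lambda'(s)=\alpha\cdot\tfrac{1}{\alpha\lambda}\cdot\lambda\cdot\alpha$ \ldots\ in any case a harmless rescaling). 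Since the lemma is only used qualitatively in the paper---to justify reducing to $\alpha=0$---this imprecision is inconsequential, but you are right to flag it; I would simply state your computation for $\alpha=1$ cleanly and add one sentence noting that a preliminary linear time rescaling reduces the general case to this one. Your remarks on the bijective, smooth nature of the change of variables (for the converse) and on its compatibility with viscosity sub/supersolutions are appropriate and more thorough than what the paper provides.
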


The proof is a straightforward computation. In the sequel we will therefore often reduce to the case $\al=0$.

\subsubsection*{K\"ahler-Ricci flow}

The above formulation of the normalized K\"ahler-Ricci flow is adapted to the asymptotic 
behaviour of solutions when $t\to +\infty$.  Applying Lemma \ref{chgvar} below to this flow, one gets
another equivalent flow which is nothing but  the (classical,  unnormalized) K\"ahler-Ricci flow. 
This equivalent formulation
is given in the definition:

\begin{defi}\label{krf}
A  flow on $X_T$ of the form $(\omega+ dd^c\phi)^n=e^{\frac{\partial \phi}{\partial t}} \mu$ 
is a K\"ahler-Ricci flow on $Y$ iff 
\begin{itemize}
\item $\omega_t \in \pi^* \mathcal{K}(Y)$ for $t>0$;
 \item $\mu=\prod_E |s_E |_{h_{E}}^{2a_E} W$ where $W$ is a volume form with continuous positive density on $X$, $s_E \in H^0(X, O_X(E))$ denotes the tautological section and $h_E$ a smooth metric on $O_X(E)$;
\item $\frac{\partial \omega}{\partial t}= dd^c\log \mu -a_E [E]$ in the sense of currents. 
\end{itemize}

\end{defi}

The corresponding cohomological flow takes the form $\frac{\partial \{ \omega_t \}}{\partial t}= \pi^* K_Y$.

 \subsubsection*{Klt pairs}
Let us also mention without going into details that one may also replace $Y$ with a  pair $(Y,\Delta)$ having  klt singularities. In that case, 
$$K_X\equiv \pi^* (K_Y+\Delta) + \sum_E a_E  E  $$ with $a_E>-1$. 
In that case $\mu_{NKRF}$ has poles and the preceding discussion does not apply. 
However, using a construction of \cite{EGZ09},  allowing high ramification along the $E$'s, we construct   a compact complex orbifold $\mathcal{X}$ whose moduli space is $c:\mathcal{X} \to X$
and we may do the preceding construction replacing $X$ by $\mathcal{X}$. Indeed $c^*\mu_{NKRF}$ is continuous in orbifold coordinates.

\subsection{The Perron discontinuous viscosity solution}

A very attractive feature of discontinuous viscosity solutions is that their existence is easily established.

\begin{defi}

A Cauchy datum for  $(CMAF)_{\omega, \mu, F}$ is  a continuous function $\phi_0: X\to \R$ such that $\phi_0 \in \mathrm{PSH}(X, \omega_0)$.

We say $\phi\in USC(X_T, \R\cup{-\infty})$ (resp. $LSC(X_T, \R\cup{+\infty})$) is a subsolution (resp. supersolution) to the Cauchy problem: 
$$ (\omega +dd^c\phi)^n= e^{\frac{\partial \phi}{\partial t} + F (t,x,\phi)} \mu, \ \ \phi|_{X \times \{0 \}}=\phi_0 \leqno (CMAF)_{X,\omega, \mu, F}(\phi_0)$$
if $\phi$ is a subsolution (resp. supersolution)  to  $(CMAF)_{\omega, \mu, F}$ such that $\phi|_{X\times \{0 \}}\le \phi_0$  (resp. $\ge$).

The Cauchy problem  $(CMAF)_{X,\omega, \mu, F}(\phi_0)$ is said to be admissible if
it has a bounded subsolution and there exists  a continuous function $\psi$  such that $\phi_0 \le \psi |_{X \times \{0 \}}$ and 
every subsolution is $\le \psi$.

\end{defi}

For instance, if $(CMAF)_{X,\omega, \mu, F}(\phi_0)$ admits a classical {\em strict} supersolution $\psi$, 
this Cauchy problem is admissible.

\begin{prop} \label{perron}
 If the Cauchy problem  $(CMAF)_{X,\omega, \mu, F}(\phi_0)$ is admissible, denoting by $\mathcal{S}$ the set of all its subsolutions,  the usc regularization $s^*$
 of $s:=\sup_{u\in \mathcal{S}} u$ is a discontinuous viscosity solution of $(CMAF)_{X,\omega, \mu, F}$. 
\end{prop}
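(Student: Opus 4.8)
The plan is to follow the standard Perron method of Ishii, adapted to the parabolic complex Monge-Amp\`ere setting, using the local formulation $(PMA)_{\mu_\beta, F_\beta}$ on each chart $]0,T[ \times U_\beta$ so that the classical theory of \cite{CIL92} applies. First I would observe that since the Cauchy problem is admissible, the set $\mathcal S$ is nonempty and uniformly bounded above (by the function $\psi$ from the admissibility hypothesis), so $s:=\sup_{u\in\mathcal S}u$ is a well-defined bounded function and $s^*$ is a bounded usc function with $s_* \le s \le s^*$.

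\medskip

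\textbf{Step 1: $s^*$ is a subsolution.} This is the classical fact that the usc envelope of a family of subsolutions, provided it is locally bounded, is again a subsolution; on each $U_\beta$ one translates by $\Phi_\beta$ and applies the Bishop-type lemma from \cite[Lemma 4.2]{CIL92} (or the complex analogue used in \cite{EGZ11}): given a point $(t_0,x_0)\in ]0,T[\times U_\beta$ and a $C^{1,2}$ test function $q$ touching $(s+\Phi_\beta)^*$ from above there, one produces points $(t_k,x_k)\to (t_0,x_0)$ and subsolutions $u_k\in\mathcal S$ with $(u_k+\Phi_\beta)(t_k,x_k)\to (s+\Phi_\beta)^*(t_0,x_0)$ and test functions $q_k$ (small perturbations of $q$) touching $u_k+\Phi_\beta$ from above at $(t_k,x_k)$; the subsolution inequality for each $u_k$ passes to the limit by continuity of $F_\beta$ and $\mu_\beta$ and by upper semicontinuity of $M\mapsto (dd^c$-part$)^n$ on the relevant cone. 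One must also check the boundary/Cauchy condition $s^*|_{X\times\{0\}}\le \phi_0$, which follows because every $u\in\mathcal S$ satisfies $u|_{X\times\{0\}}\le\phi_0$ and $\phi_0$ is continuous, so the usc regularization does not exceed $\phi_0$ there.

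\medskip

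\textbf{Step 2: $s_*$ is a supersolution.} This is the bump-construction step and the real heart of the argument. Suppose not: then there is a point $(t_0,x_0)\in ]0,T[\times U_\beta$ and a $C^{1,2}$ test function $q$ touching $(s_*+\Phi_\beta)$ from below at $(t_0,x_0)$ where the supersolution inequality fails strictly. One then modifies $q$ to a strict classical subsolution $q_\delta$ of $(PMA)_{\mu_\beta,F_\beta}$ on a small parabolic neighbourhood of $(t_0,x_0)$ with $q_\delta > q$ but $q_\delta$ still below $s+\Phi_\beta$ near the parabolic boundary of that neighbourhood (this uses $q(t_0,x_0)=(s_*+\Phi_\beta)(t_0,x_0)\le (s+\Phi_\beta)(t_0,x_0)$ together with semicontinuity). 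Translating back, one obtains a subsolution $\tilde u$ of $(CMAF)_{\omega,\mu,F}$ by gluing $\max(s, q_\delta-\Phi_\beta)$ inside and $s$ outside --- one checks that $\tilde u\in\mathcal S$ (it is still $\le\psi$ by the admissibility and still $\le\phi_0$ at $t=0$, since we work at an interior time $t_0>0$) but $\tilde u$ is strictly larger than $s$ somewhere, contradicting $s=\sup_{\mathcal S}$. The subtle points here are (a) that the test-function perturbation producing a \emph{strict classical subsolution} of the complex MA flow is legitimate --- this is where the ellipticity/degenerate-parabolicity of the operator and the monotonicity of $F$ in the last variable are used, exactly as in \cite[Lemma 4.7]{CIL92} and in the elliptic complex case of \cite{EGZ11}; and (b) that the glued function $\max(\cdot,\cdot)$ is again a viscosity subsolution, which is the standard fact that the maximum of two subsolutions is a subsolution.

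\medskip

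\textbf{Conclusion.} Having shown $s^*$ is a subsolution and $s_*$ is a supersolution, $s$ is by definition a discontinuous viscosity solution of $(CMAF)_{X,\omega,\mu,F}$. The main obstacle I expect is Step 2: carefully carrying out the bump construction in the parabolic complex Monge-Amp\`ere context --- ensuring the perturbed test function is a genuine strict classical subsolution (so that the maximum-gluing stays a subsolution), that it respects the Cauchy datum, and that the perturbation can be localized away from $t=0$ and from the other coordinate patches. Everything else is a routine transcription of Ishii's Perron method, using the chartwise reduction to $(PMA)_{\mu_\beta,F_\beta}$ and the correspondence with the local theory of \cite{EGZ14,EGZ11,CIL92}.
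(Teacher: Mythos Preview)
The paper does not give a proof of this proposition at all: it simply writes ``Omitted. See \cite{CIL92}, \cite{IS12}.'' Your proposal is precisely an outline of the standard Ishii--Perron argument from those references, transported to the local charts $(PMA)_{\mu_\beta,F_\beta}$, so your approach is exactly what the paper has in mind.

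Two small remarks. First, the proposition only claims that $s^*$ is a discontinuous viscosity solution of the \emph{flow} $(CMAF)_{X,\omega,\mu,F}$, not of the Cauchy problem; the boundary check $s^*|_{t=0}\le\phi_0$ you insert in Step~1 is therefore not required here, and in fact the paper treats that issue separately later via the barrier constructions of Section~3. Moreover your justification for that inequality is not correct as stated: the usc regularization is taken jointly in $(t,x)$, so $u(0,\cdot)\le\phi_0$ for each $u\in\mathcal S$ does not by itself prevent $s^*(0,\cdot)$ from exceeding $\phi_0$ through limits from $t>0$. Second, in Step~2 you glue the bump with $s$ itself, but $s$ need not be usc; the clean version of the argument glues the bump with individual elements $u\in\mathcal S$ (chosen so that $u$ is close to $s$ at a suitable nearby point where $s$ is close to $s_*$), producing a new member of $\mathcal S$ that strictly exceeds $s$ somewhere. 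These are standard wrinkles in the Perron method and do not affect the overall correctness of your plan.
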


\begin{proof}
Omitted. See \cite{CIL92}, \cite{IS12}. 
\end{proof}

This construction raises two issues: whether $s^*$ is continuous, hence a true viscosity solution and whether it is a solution to the Cauchy problem 
in the na\"ive sense namely whether $s^*|_{X \times \{0 \} }=\phi_0$. The first issue is generally treated using a Comparison Principle and the second issue 
is taken care of by barrier constructions. 

The Parabolic Comparison Principle (PCP) states that if $\phi$ (resp. $\psi$)
is a subsolution (resp. a supersolution)
to $(CMAF)_{X,\omega, \mu, F}(\phi_0)$ then $\phi \le \psi$.  It implies that $s^*$ as in Proposition  \ref{perron} is the unique  viscosity solution  to $(CMAF)_{X,\omega, \mu, F}(\phi_0)$
and that it is continuous.  If the (PCP) holds for sub/supersolutions with extra regularity (e.g.: classical, Lipschitz, ...)
it implies that there is at most one viscosity solution with this extra regularity. 

Comparison Principles
are rather elaborate forms of the maximum
principle. We believe that (PCP) should hold under condition (1.1) provided there exists a semipositive smooth closed $(1,1)$-form $\theta$ of positive volume such that $\omega_t \ge \theta$ for all $t\in [0,T[$. 
Unfortunately, proving (PCP) is rather technical and we will describe what we have been able to prove in the next section. 
It is  very encouraging that  optimal results in that direction are available in the local case \cite{EGZ14}.

\section{Parabolic Comparison Principles}

  Let $X$ be a compact complex manifold of dimension $n$ and $\omega_t$ a continuous family of closed real $(1,1)$-forms on $X$.
We consider the complex Monge-Amp\`ere flow on $X_T= [0,T[ \times  X $ associated to $(\omega,F,\mu)$,
\begin{equation} \label{eq:PCMA}
 e^{\dot{\f}_t  + F (t,x,\f) } \mu (t,x) - (\omega_t +dd^c \f_t)^n = 0, 
 \end{equation}
 according to definition \ref{cmafdef}. 
\smallskip

\subsection{Statement of the Global Parabolic Comparison principles}
Let $\f$ (resp. $\p$) be a bounded subsolution (resp. supersolution) to the parabolic complex Monge-Amp\`ere equation (\ref{eq:PCMA}) in $X_T$ associated to $(\omega, \mu ,F)$. 
Our goal in this section is to establish several versions of the  global comparison principle,
starting with the following:

\begin{theo}\label{SCP}
 Assume that $\mu (t,x) > 0$ is positive in $X_T$ and that locally in $]0,T[ \times X$ we have the inequality  $  \partial_t \p  \geq   - C$ in the sense of viscosity, for some constant $C >0$.
  Then for all $(t,x) \in [0,T[ \times X$,
$$
\f (t,x) - \p (t,x) \leq \max_{x \in X} (\f (0,x)  - \p (0,x))_+.
$$
In particular if $\f (0,x) \leq \p (0,x)$ in $ X$ then $\f (t,x) \leq \p (t,x)$ in $[0,T[ \times X$.
 \end{theo}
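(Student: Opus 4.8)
The plan is to run the standard viscosity ``doubling of variables'' argument adapted to the parabolic complex Monge-Amp\`ere setting, following \cite{CIL92} and the elliptic arguments of \cite{EGZ11}. Set $M := \sup_{X_T}(\f - \p)$ and suppose, for contradiction, that $M > \max_{x\in X}(\f(0,x)-\p(0,x))_+$. Because the claimed bound must fail somewhere and the time interval is $[0,T[$ (the problematic behaviour as $t\to T$ must be excluded), I would first localize in time: fix $T' < T$, replace $\p$ by $\p + \eta/(T'-t) + \delta t$ for small $\eta,\delta > 0$, which is still a supersolution (the added term has positive $t$-derivative, so it only increases the left-hand side of \eqref{eq:PCMA}), and which blows up as $t\to T'$; after proving the inequality for this perturbed pair one lets $\eta,\delta\to 0$ and $T'\to T$. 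This reduces matters to showing that the supremum of $\f - \p - \eta/(T'-t) - \delta t$ over $[0,T'[\times X$ is attained at $t=0$ (or is $\le 0$), and in particular is attained at an \emph{interior} time $t_0 \in ]0,T'[$ if it is positive.

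Next comes the doubling. On $X$ I would use a fixed finite atlas and, via a partition of unity / the standard device of comparing points only when they are close, work in a single chart $U_\beta$, where the equation becomes $(PMA)_{\mu_\beta,F_\beta}$ for $\psi = \f + \Phi_\beta$ and $\chi = \p + \Phi_\beta$. Introduce, for $\e > 0$,
$$
\Phi_\e(t,x,y) := \psi(t,x) - \chi(t,y) - \frac{1}{2\e}\,|x-y|^2 - \text{(the localizing terms in $t$)},
$$
and let $(t_\e,x_\e,y_\e)$ be a maximum point. Standard arguments give $t_\e \to t_0 \in\, ]0,T'[$, $x_\e,y_\e \to x_0$, $|x_\e-y_\e|^2/\e \to 0$, and the maximum value converges to (a perturbation of) $M > 0$; in particular $t_\e$ is an interior time for $\e$ small, so the parabolic maximum principle / Jensen--Ishii lemma applies. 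This produces, for each $\e$, a single number $a = \partial_t\psi(t_\e,x_\e)$-slot shared by sub- and supersolution (this is where parabolicity enters: the time derivative is common), and matrices $P_\e, Q_\e$ with $P_\e \le Q_\e$ (up to $O(\e)$ error) such that the subsolution inequality gives $(dd^c\text{-part of }P_\e)^n \ge e^{a + F_\beta(t_\e,x_\e,\psi(t_\e,x_\e))}\mu_\beta(t_\e,x_\e)$ while the supersolution inequality gives $(dd^c\text{-part of }Q_\e)^n \le e^{a + F_\beta(t_\e,y_\e,\chi(t_\e,y_\e))}\mu_\beta(t_\e,y_\e)$ --- here I use the hypothesis $\partial_t\p \ge -C$ to know the supersolution's $t$-slot is finite, so the common value $a$ is genuinely a real number and the supersolution inequality is non-degenerate.

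To derive the contradiction I would combine these: the matrix inequality $P_\e \le Q_\e$ forces $\det$-monotonicity of the complex Hessians (both being positive semidefinite, as $\psi_t,\chi_t$ are $\omega_t$-psh by the Proposition preceding \eqref{eq:PCMA}), so dividing the two inequalities,
$$
1 \le \frac{(dd^c\text{-part of }P_\e)^n}{(dd^c\text{-part of }Q_\e)^n} \le \frac{e^{F_\beta(t_\e,x_\e,\psi(t_\e,x_\e))}\mu_\beta(t_\e,x_\e)}{e^{F_\beta(t_\e,y_\e,\chi(t_\e,y_\e))}\mu_\beta(t_\e,y_\e)} \cdot (1 + o(1)).
$$
Now use: (i) positivity and continuity of $\mu$ (hence $\mu_\beta$), so the $\mu$-ratio $\to 1$; (ii) continuity of $F$ and $x_\e,y_\e\to x_0$; (iii) monotonicity of $F$ in its last argument together with $\psi(t_\e,x_\e) - \chi(t_\e,y_\e) \to M + \text{(perturbation)} > 0$ for the perturbed functions --- which makes $F_\beta(t_\e,x_\e,\psi(t_\e,x_\e)) - F_\beta(t_\e,y_\e,\chi(t_\e,y_\e)) \ge -o(1)$ in a way that, after accounting for $\Phi_\beta$ being smooth, still leaves the right side bounded by $1 - c < 1$ for some $c>0$ once the $\delta t$ perturbation is present (this is the usual trick: the strictly increasing perturbation creates the needed strict gap). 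That contradiction shows $M \le 0$ up to perturbation; letting the perturbation parameters go to zero yields $\f - \p \le \max_X(\f(0,\cdot)-\p(0,\cdot))_+$.

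The main obstacle, and the reason the hypotheses $\mu > 0$ and $\partial_t\p \ge -C$ appear, is ensuring the supersolution inequality is usable: if $\mu$ vanishes or the common time-slot $a$ could be $-\infty$, the comparison $(dd^cQ_\e)^n \le e^{a+F}\mu$ degenerates and one cannot conclude. Handling the degenerate case $\mu \ge 0$ (Theorem A) requires the regularity of $t\mapsto\omega_t$ and a more delicate argument; here positivity of $\mu$ is precisely what lets the clean division above go through. A secondary technical point is the passage between the global object on $X$ and the local charts $U_\beta$ --- one must check the localization in $x$ is legitimate, i.e. that $x_\e$ and $y_\e$ lie in a common chart for small $\e$, which follows from $x_\e,y_\e\to x_0$.
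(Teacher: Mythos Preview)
Your overall architecture (penalize in $t$, localize, double variables, apply parabolic Jensen--Ishii, compare determinants) matches the paper's. But two steps in your plan are genuinely incomplete, and the paper's proof shows why they are the heart of the matter.

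\textbf{Localization on the compact manifold.} Your ``partition of unity / standard device'' does not work here: the maximum of $\f-\p$ (penalized) lives somewhere on $X$ and there is no reason the doubled maximum $(x_\e,y_\e)$ stays in a fixed chart. The paper first locates the undoubled maximum at $(t_0,x_0)$, passes to coordinates on a ball $B_4$ around $x_0$, and then introduces an explicit localizing penalty $\sigma(x,y)\ge 0$ from \cite{EGZ11} which vanishes to high order on the diagonal near $0$ and satisfies $\sigma\ge 3C$ on $\bar B_4^2\setminus B_2^2$. This forces $(x_\e,y_\e)\in B_2^2$. The high-order vanishing of $\sigma$ along the diagonal is not cosmetic: it guarantees $D^2\sigma(x_\e,y_\e)=o(\e^n)$, which is exactly the error size needed in the next step.

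\textbf{The determinant comparison.} Your claim that $P_\e\le Q_\e$ implies $\det$-monotonicity ``both being positive semidefinite, as $\psi_t,\chi_t$ are $\omega_t$-psh'' is incorrect: the matrices coming out of Jensen--Ishii are elements of the semi-jets $\overline{\mathcal P}^{2\pm}$, not actual Hessians, and plurisubharmonicity of the functions says nothing about their sign. What the paper does is quantitative. From the Jensen--Ishii block inequality with $\gamma=\e$ one extracts that the $(1,1)$-parts $H^\pm$ have eigenvalues $O(\e^{-1})$ while $H^+-H^-\le o(\e^n)$. The \emph{subsolution} inequality, together with $\mu>0$, forces $H^+>0$ with $\det H^+\ge c>0$ uniformly; hence its smallest eigenvalue is $\ge c\,\e^{n-1}$. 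Only then does $H^+\le H^-+o(\e^n)$ force $H^->0$ and $\det H^+\le\det H^-+o(\e)$. This is where $\mu>0$ is really used (not merely to make a ratio well-defined), and the hypothesis $\partial_t\p\ge -C$ enters as condition (8.5) of \cite[Theorem~8.3]{CIL92} for $-v$, and also to absorb the $o(\e)$ error via $\tau_j^-\ge -C$. Without this careful eigenvalue bookkeeping your division step has no justification.
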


Observe that as in the local case (see \cite{EGZ14}, Remark 2.4), in order to apply the parabolic Jensen-Ishii's maximum principle, we need to assume that the supersolution satisfies a
local lower bound on its time derivative even when $\mu > 0$, since our parabolic equation has a structural dissymetry.

The versions of the comparison principle that we will use in the sequel require that one weakens the hypothesis that $\mu$ be positive and lift the 
condition on $\psi$. It is not clear what is the optimal provable result in this direction. We will state and prove four variants 
which all 
require that we strengthen our working hypotheses as follows:

\begin{displaymath}
\label{Condition} (\ref{Condition})
\left\{
\begin{array}{ll}
a) &X \ {\mathrm{is \ K\ddot{a}hler}}, \\
b) & {\mathrm{There \ exists \ a \ semipositive \ closed \ }} (1,1) {\mathrm{\ form \ }} \theta \ \mathrm{on} \ X \ {\mathrm{such \ that \ }} \\
&\omega_t\ge \theta \  {\mathrm{and}}  \ \{\theta\}^{n}>0, \\
c) &(t,x,r)\mapsto F(t,x,r) \  {\mathrm{is \  uniformly \ Lipschitz \  in  \ the \ }} r {\mathrm{\ variable}},\\
d)&(t,x)\mapsto F(t,x,0) \ {\mathrm{is  \ uniformly \ bounded \ above.}}
\end{array}\right.
\end{displaymath}

Since we also require $F$ is non decreasing in the $r$ variable, the conditions for $F$ are satisfied when $F(t,x,r)=\alpha r$ with $\alpha\ge 0$.

\begin{coro} \label{Cor1} Assume (\ref{Condition}) holds. 
 Assume that $\mu (t,x) \geq 0$ in $X_T$ and locally in $]0,T[ \times X$, there exists a constant $C > 0$ such that  $\vert \partial_t \f\vert  \leq  C$ and  $\partial_t \p  \geq  - C$.
Then 
$$
\f (t,x) - \p (t,x) \leq \max_{x \in X} (\f (0,x)  - \p (0,x))_+,
$$
for all $(t,x) \in [0,T[ \times X$.
\end{coro}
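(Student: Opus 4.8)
The plan is to deduce Corollary~\ref{Cor1} from Theorem~\ref{SCP} by a double regularization: thicken the degenerate measure $\mu$ to make it strictly positive, and thicken $\omega_t$ using the reservoir of positivity provided by condition~(\ref{Condition})~b), so that the perturbed data still admit $\varphi$ as a subsolution and $\psi$ as a supersolution while the hypotheses of Theorem~\ref{SCP} become available. Fix a Kähler form on $X$ and, using (b), write $\omega_t = \theta + (\omega_t-\theta)$ with $\omega_t-\theta\ge 0$; since $\{\theta\}^n>0$, by the elliptic Calabi--Yau / pluripotential construction (as in \cite{EGZ09}) there is a bounded $\theta$-psh function $\rho$ with $(\theta+dd^c\rho)^n = c\,\mu_0$ for a fixed smooth volume form $\mu_0>0$ and $c>0$, or more simply a smooth $\theta$-psh $\rho$ with $\theta+dd^c\rho>0$ strictly. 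For $\varepsilon>0$ set $\mu^\varepsilon := \mu + \varepsilon\,\mu_0$ (now strictly positive) and perturb the subsolution by $\varphi^\varepsilon := \varphi + \varepsilon\rho - A\varepsilon t$ for a suitable constant $A$ depending on the Lipschitz constant of $F$ in $r$ and on $\sup F(t,x,0)$ (using (c),(d)); the convexity/superadditivity inequality $(\omega_t + dd^c\varphi_t + \varepsilon dd^c\rho)^n \ge (\omega_t+dd^c\varphi_t)^n + \varepsilon^n(\theta+dd^c\rho)^n \ge e^{\dot\varphi_t+F(t,x,\varphi)}\mu + \varepsilon^n c\,\mu_0$ together with the bound $|\partial_t\varphi|\le C$ and the choice of $A$ shows $\varphi^\varepsilon$ is a subsolution of the flow with data $(\omega,\mu^\varepsilon,F)$ (a viscosity argument via test functions, invoking Lemma~\ref{lem:pluripotvisc} where convenient). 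The supersolution $\psi$ is a fortiori a supersolution for the larger measure $\mu^\varepsilon\ge\mu$, and it still satisfies $\partial_t\psi\ge -C$ locally.

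Having arranged $\mu^\varepsilon>0$ and the lower bound on $\partial_t\psi$, apply Theorem~\ref{SCP} to the pair $(\varphi^\varepsilon,\psi)$ for the data $(\omega,\mu^\varepsilon,F)$: for all $(t,x)$,
$$
\varphi^\varepsilon(t,x)-\psi(t,x)\le \max_{x\in X}\big(\varphi^\varepsilon(0,x)-\psi(0,x)\big)_+ \le \max_{x\in X}\big(\varphi(0,x)-\psi(0,x)\big)_+ + \varepsilon\,\mathrm{osc}_X\rho.
$$
Then let $\varepsilon\to 0$: since $\varepsilon\rho-A\varepsilon t\to 0$ uniformly on $[0,T[\times X$ (on each compact time interval, which suffices as the conclusion is pointwise in $t$), we recover $\varphi(t,x)-\psi(t,x)\le \max_X(\varphi(0,\cdot)-\psi(0,\cdot))_+$, which is the claim. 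The last sentence of the corollary is the special case $\varphi(0,\cdot)\le\psi(0,\cdot)$.

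The main obstacle is the middle step: verifying rigorously, in the \emph{viscosity} sense rather than pointwise, that $\varphi^\varepsilon$ is a subsolution of the $\varepsilon$-perturbed flow. One must be careful that $\rho$ need only be bounded (not $C^{1,2}$), so the superadditivity of the complex Monge--Amp\`ere operator has to be used through test functions touching $\varphi^\varepsilon$ from above, combining the test function for $\varphi$ with the (distributional) positivity $\theta+dd^c\rho\ge 0$ and controlling the time-derivative correction; the hypothesis $|\partial_t\varphi|\le C$ is exactly what lets the constant $A$ absorb the resulting error terms, and conditions (c),(d) are what make $A$ finite and independent of $\varepsilon$. A secondary technical point is to ensure $\varphi^\varepsilon$ remains bounded and u.s.c., which is immediate since $\rho$ is bounded. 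Everything else is the routine passage to the limit.
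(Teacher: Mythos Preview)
Your overall strategy---regularize the subsolution so that it becomes a subsolution for a strictly positive measure, then invoke Theorem~\ref{SCP}---is exactly the paper's strategy. However, there is a genuine gap in the key step.

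The superadditivity inequality you write,
\[
(\omega_t + dd^c\varphi_t + \varepsilon\, dd^c\rho)^n \;\ge\; (\omega_t+dd^c\varphi_t)^n + \varepsilon^n(\theta+dd^c\rho)^n,
\]
is false in general. Superadditivity of the Monge--Amp\`ere operator requires a decomposition into a sum of two \emph{nonnegative} $(1,1)$-forms, but $\varepsilon\, dd^c\rho$ is not nonnegative: only $\theta+dd^c\rho\ge 0$ is. Concretely, at any point where $\omega_t+dd^c\varphi_t=0$ (which is allowed, the subsolution being merely $\omega_t$-psh), your left-hand side equals $\varepsilon^n(dd^c\rho)^n$, and $dd^c\rho$ need not even be semipositive there. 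In particular $\varphi^\varepsilon=\varphi+\varepsilon\rho-A\varepsilon t$ is in general \emph{not} $\omega_t$-psh, hence cannot be a viscosity subsolution for the flow with background form $\omega_t$. Shifting instead to the background form $\omega_t+\varepsilon\theta$ does repair the decomposition, but then $\psi$ is no longer a supersolution for that larger form, so Theorem~\ref{SCP} does not apply.

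The paper's fix is to use a \emph{convex combination} rather than an additive perturbation: set $\varphi^\varepsilon:=(1-\varepsilon)\varphi+\varepsilon\rho-A t$. Then
\[
\omega_t+dd^c\varphi^\varepsilon_t=(1-\varepsilon)(\omega_t+dd^c\varphi_t)+\varepsilon(\omega_t+dd^c\rho),
\]
and since $\omega_t\ge\theta$ both summands are nonnegative, so genuine superadditivity yields
\[
(\omega_t+dd^c\varphi^\varepsilon_t)^n\ge (1-\varepsilon)^n(\omega_t+dd^c\varphi_t)^n+\varepsilon^n(\theta+dd^c\rho)^n.
\]
The two-sided bound $|\partial_t\varphi|\le C$ is then used twice: the lower bound $\partial_t\varphi\ge -C$ controls the error $\partial_t\varphi^\varepsilon-\partial_t\varphi=-\varepsilon\,\partial_t\varphi-A$ when choosing $A=A(\varepsilon)$ (this is where conditions (c),(d) on $F$ enter), and the upper bound $\partial_t\varphi\le C$ gives a uniform upper bound on $\partial_t\varphi^\varepsilon+F(t,x,\varphi^\varepsilon)$, which lets one absorb the $\varepsilon^n\lambda_0$ term into $e^{\partial_t\varphi^\varepsilon+F}\cdot\eta\lambda_0$ for a suitable $\eta=\eta(\varepsilon)>0$. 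One then applies the more general inequality~(\ref{eq:FIneq0}) established inside the proof of Theorem~\ref{SCP} (subsolution for $\mu+\eta\lambda_0$, supersolution for $\mu$) rather than Theorem~\ref{SCP} itself, and lets $\varepsilon\to 0$. Your concern about $\rho$ being merely bounded is a secondary issue; the primary one is the algebra of the perturbation.
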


 \begin{coro} \label{Cor2}  Assume (\ref{Condition}) holds.    
 Assume that $\mu (t,x)=\mu(x)  \geq 0 $ and $t \longmapsto \omega_t = \omega (t,\cdot)$ is constant. 
Then for all $(t,x) \in [0,T[ \times X$,
$$
\f (t,x) - \p (t,x) \leq \max_{x \in X} (\f (0,x)  - \p (0,x))_+.
$$
\end{coro}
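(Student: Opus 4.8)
The idea is to reduce the constant-in-time, time-independent-measure situation to the setting of Corollary \ref{Cor1}, i.e. to produce from the given subsolution $\f$ and supersolution $\p$ new sub/supersolutions that enjoy a locally uniform bound on their time derivatives, without worsening their behavior at $t=0$. The key remark is that when $\omega_t=\omega$ and $\mu(t,x)=\mu(x)$ are independent of $t$, the equation (\ref{eq:PCMA}) is \emph{autonomous}, so it is invariant under time translation and, more usefully, it interacts well with infimal/supremal convolutions in the time variable and with the operation of taking $\sup$/$\inf$ over translates.

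First I would treat the subsolution. For $h>0$ small, set $\f^{(h)}(t,x):=\sup_{0\le s\le h}\f(t+s,x)$ on $[0,T-h[\times X$ (or, to keep the initial slice under control, $\f^{(h)}(t,x):=\sup_{\max(-t,-h)\le s\le 0}\f(t+s,x)$, so that $\f^{(h)}(0,\cdot)=\f(0,\cdot)$). By autonomy each translate $\f(\cdot+s,\cdot)$ is again a subsolution, and the supremum of a family of subsolutions is a subsolution (after usc regularization, which here is harmless because we will ultimately work with the usc envelope anyway, and because $\f$ is usc). Moreover $\f^{(h)}$ is nondecreasing in $t$ up to the window size, which combined with the a priori bound $\dot\f\ge F(t,x,0)+$(bounded terms)$\,\ge -C$ coming from the subsolution inequality (using (\ref{Condition})d) and the boundedness of $\f$) yields a two-sided local bound $|\partial_t\f^{(h)}|\le C$ in the viscosity sense. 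Symmetrically, for the supersolution put $\p^{(h)}(t,x):=\inf_{\max(-t,-h)\le s\le 0}\p(t+s,x)$; this is a supersolution with $\p^{(h)}(0,\cdot)=\p(0,\cdot)$ and $\partial_t\p^{(h)}\ge -C$ locally. Here I would need to check carefully that the infimum over a compact family of translates of a supersolution is again a (discontinuous) supersolution — this is the standard stability of viscosity supersolutions under infima, and in the parabolic autonomous case it goes through verbatim, but the bookkeeping near the parabolic boundary $t=0$ is the delicate point.

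Now apply Corollary \ref{Cor1} to the pair $(\f^{(h)},\p^{(h)})$, whose initial slices are exactly $\f(0,\cdot)$ and $\p(0,\cdot)$: this gives
$$
\f^{(h)}(t,x)-\p^{(h)}(t,x)\le \max_{x\in X}\bigl(\f(0,x)-\p(0,x)\bigr)_+
$$
for all $(t,x)\in[0,T-h[\times X$. Since $\f(t,x)\le\f^{(h)}(t,x)$ and $\p^{(h)}(t,x)\le\p(t,x)$ pointwise, the same inequality holds with $\f,\p$ on the left. Finally let $h\to 0^+$ (equivalently, exhaust $[0,T[$ by the intervals $[0,T-h[$) to conclude the estimate on all of $[0,T[\times X$.

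\textbf{Main obstacle.} The computations that $\omega$ and $\mu$ being $t$-independent make each time-translate a sub/supersolution, and that $\f^{(h)}$ satisfies $|\partial_t\f^{(h)}|\le C$ in the viscosity sense, are routine. The genuinely delicate step is the stability of the \emph{supersolution} property under the operation $\p\mapsto\inf_{s}\p(\cdot+s,\cdot)$ together with the requirement that this not spoil the Cauchy data at $t=0$; one must make sure the infimum is taken over a compact parameter set so that lower semicontinuity and the viscosity test against $C^{1,2}$ test functions are preserved up to the boundary, and that the degenerate measure $\mu\ge 0$ (rather than $\mu>0$) is accommodated — which is precisely why we route through Corollary \ref{Cor1} rather than Theorem \ref{SCP}. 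I would spell this reduction out with the same care as in the analogous local statement of \cite{EGZ14}.
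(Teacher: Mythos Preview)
Your overall strategy --- exploit time-autonomy to regularize $\f$ and $\p$ in $t$, produce sub/super\-solutions with controlled time derivatives, and then feed these into Corollary~\ref{Cor1} --- is precisely the paper's approach. The gap is in the specific regularization you propose: it does not yield the time-derivative bounds you claim.

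Taking $\f^{(h)}(t,x)=\sup_{s}\f(t+s,x)$ over a window of length $h$ does \emph{not} make the result Lipschitz in $t$; a rapidly oscillating $\f$ produces a rapidly oscillating running maximum. The phrase ``nondecreasing in $t$ up to the window size'' has no interpretation that gives $|\partial_t\f^{(h)}|\le C$. Furthermore, your appeal to the subsolution inequality for a lower bound $\dot\f\ge -C$ has the wrong sign: testing from above in $(\omega+dd^c\f)^n\ge e^{\dot\f+F}\mu$ produces an \emph{upper} bound on $\dot\f$ (and only at points where $\mu>0$), never a lower one. The symmetric claims for $\p^{(h)}$ fail for the same reasons, so the hypotheses of Corollary~\ref{Cor1} are not met.

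The correct implementation is the paper's Lipschitz (sup/inf) convolution in time,
\[
\f^k(t,x)=\sup_{s}\bigl\{\f(s,x)-k|s-t|\bigr\},\qquad
\p_k(t,x)=\inf_{s}\bigl\{\p(s,x)+k|s-t|\bigr\},
\]
which are $k$-Lipschitz in $t$ by construction. By \cite[Lemma~2.5]{EGZ14}, and using that $\omega$ and $\mu$ are $t$-independent (so $\mu_k=\mu^k=\mu$), $\f^k$ is a subsolution for $(\omega,F_k,\mu)$ and $\p_k$ a supersolution for $(\omega,F^k,\mu)$. Because $F$ is perturbed to $F_k,F^k$, Corollary~\ref{Cor1} cannot be invoked as a black box; one reruns its argument (perturbing by $\e\rho$ to make the measure positive and applying the key inequality (\ref{eq:FIneq3})), lets $\e\to0$, and finally $k\to\infty$ using $\f^k\searrow\f$, $\p_k\nearrow\p$ and $F_k,F^k\to F$ locally uniformly together with Dini--Cartan. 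Your identification of the ``main obstacle'' (stability of the supersolution under the infimum operation, and behavior at $t=0$) is a genuine technical point, but it is handled by this convolution device rather than by a window-sup.
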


The following generalization holds:

 \begin{coro} \label{Cor3}  Assume (\ref{Condition}) hold.    
 Assume that $\mu (t,x)=\mu(x)  \geq 0 $ and $t \longmapsto \omega_t = \omega (t,\cdot)$ is monotone in $t$. 
Then for all $(t,x) \in [0,T[ \times X$,
$$
\f (t,x) - \p (t,x) \leq \max_{x \in X} (\f (0,x)  - \p (0,x))_+.
$$
\end{coro}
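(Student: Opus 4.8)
The plan is to revisit the proof of Corollary \ref{Cor2}, using the monotonicity of $t\mapsto\omega_t$ wherever that proof uses its constancy; we may assume $t\mapsto\omega_t$ is non-decreasing, the non-increasing case being symmetric. The first remark is that monotonicity already gives one half of what is needed: if $\omega_s\le\omega_t$ for $s\le t$, then every supersolution $\psi$ of $(CMAF)_{\omega,\mu,F}$ is also a supersolution of the \emph{constant} flow $(CMAF)_{\omega_0,\mu,F}$. Indeed, if a $C^{1,2}$ function $\chi$ touches $\psi$ from below at $(t_0,y_0)$ and $\omega_0(y_0)+dd^c\chi_{t_0}(y_0)\ge 0$, then also $\omega_{t_0}(y_0)+dd^c\chi_{t_0}(y_0)\ge\omega_0(y_0)+dd^c\chi_{t_0}(y_0)\ge 0$, so the supersolution inequality for the $\omega$-flow applies, and monotonicity of $\det$ on positive Hermitian matrices transfers it to $\omega_0$. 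The symmetric statement for the subsolution $\varphi$ fails — touching $\varphi$ from above makes $\omega_{t_0}+dd^c\chi_{t_0}$ positive, not $\omega_0+dd^c\chi_{t_0}$ — so one cannot merely invoke Corollary \ref{Cor2}, and the doubling argument has to be run directly, with monotonicity entering internally.

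Before doubling one disposes of the two structural obstructions of the monotone setting. To kill the degeneracy of $\mu$ one uses hypothesis (\ref{Condition}): fix a Kähler form $\kappa$, set $c:=\{\theta\}^n/\int_X\kappa^n>0$, and solve by \cite{EGZ09} the complex \MA equation $(\theta+dd^c\rho)^n=c\,\kappa^n$ for a bounded continuous $\theta$-plurisubharmonic $\rho$, which by \cite[Theorem~1.9, Lemma~4.7]{EGZ11} is a viscosity subsolution of it. For $\e\in(0,1)$ put $\varphi_\e:=(1-\e)\varphi+\e\rho+\e a t$ with $a$ a large constant, and $F_\e(t,x,r):=F\bigl(t,x,(1-\e)^{-1}(r-\e\rho-\e a t)\bigr)$. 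Using super-additivity of $M\mapsto(\det M)^{1/n}$ on positive Hermitian matrices, the bound $\omega_t\ge\theta$, and the fact that every test function touching $\varphi$ from above satisfies, at the touching point, the $\omega_t$-plurisubharmonicity inequality, one checks that $\varphi_\e$ is a viscosity subsolution of the flow whose right-hand side is $(1-\e)^n$ times that of $(CMAF)_{\omega,\mu,F_\e}$ (up to the harmless shift of the time exponent) plus $\e^n c\,\kappa^n$; in particular $(\omega_t+dd^c\varphi_\e)^n\ge\e^n c\,\kappa^n>0$ in the viscosity sense — a uniformly elliptic bound (here hypotheses $c)$ and $d)$ of (\ref{Condition}), and the choice of $a$, serve to control $F_\e$ uniformly as $\e\to0$). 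To compensate for the absence of a lower bound on $\partial_t\psi$ — which, because of the structural dissymmetry of the equation in the time variable, forbids a direct appeal to Theorem \ref{SCP} — one replaces $\psi$ by its inf-convolution $\psi^h$ in $(t,x)$: then $\psi^h\uparrow\psi$ as $h\to0$, $\psi^h$ is a supersolution of a perturbed flow $(CMAF)_{\omega,\mu^h,F^h}$ with $(\mu^h,F^h)\to(\mu,F)$, and $\psi^h$ is locally semi-concave, hence locally Lipschitz, so $\partial_t\psi^h\ge-C_h$ locally.

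One then compares $\varphi_\e$ with $\psi^h$ by the parabolic maximum principle of \cite{CIL92}, following the proof of Theorem \ref{SCP} / Corollary \ref{Cor2}: for $T'<T$ and $\delta>0$ one maximises $(t,x,y)\mapsto\varphi_\e(t,x)-\psi^h(t,y)-\tfrac1{2\eta}d(x,y)^2-\tfrac{\delta}{T'-t}$ over $[0,T')\times X\times X$; if the maximum is attained at $t_0>0$, the parabolic Jensen–Ishii principle produces jets $(\tau_1,A)$ for $\varphi_\e$ and $(\tau_2,B)$ for $\psi^h$ at the same time $t_0$, with $\tau_1-\tau_2=\delta/(T'-t_0)^2>0$ and $A\le B+o_\eta(1)$. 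Inserting these into the two differential inequalities, and using that $\omega_{t_0}(x_0)$ and $\omega_{t_0}(y_0)$ are close (same time, nearby points — ordinary continuity of the family suffices here), that $(\omega_{t_0}(x_0)+A)^n\ge\e^n c\,\kappa^n>0$ forces $\omega_{t_0}(x_0)+A$, hence $\omega_{t_0}(y_0)+B$, to be positive, together with monotonicity of $\det$, positivity of the perturbed right-hand side, and monotonicity of $F$ in $r$, one reaches the contradiction $\delta/(T'-t_0)^2\le o_\eta(1)$. Letting successively $\eta\to0$, $\delta\to0$, $T'\to T$, $h\to0$ and $\e\to0$ yields $\varphi-\psi\le\max_X(\varphi(0,\cdot)-\psi(0,\cdot))_+$.

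I expect the delicate point to be the very last step: keeping the two perturbations compatible with the monotone time-dependence while the estimates remain uniform. After passing to the local potentials $\Phi_\beta(t,\cdot)$ and the twisted densities $\mu_\beta=e^{-\partial_t\Phi_\beta}\mu$, one must check that $\varphi_\e$ is still a subsolution of the \emph{same} local parabolic problem — and this is exactly where $\omega_t\ge\theta$ and $\omega_0\le\omega_t$ (i.e.\ the monotonicity, used in the spirit of Definition \ref{defi:oregular}) secure the correct signs — and one must also ensure that the smallest eigenvalue of $\omega_{t_0}(x_0)+A$ dominates the penalisation error $o_\eta(1)$, which forces a careful calibration of the exponents in the penalisation. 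This is the parabolic, time-dependent analogue of the delicate step already present in the elliptic comparison principle of \cite{EGZ11} and in the local parabolic one of \cite{EGZ14}.
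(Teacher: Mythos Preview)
Your overall strategy—perturb $\varphi$ to force non-degeneracy, regularise $\psi$ to obtain a local lower bound on $\partial_t\psi$, then run the doubling argument of Theorem \ref{SCP}—is in the spirit of the paper's proofs of Corollaries \ref{Cor1}–\ref{Cor2}. The genuine gap lies precisely at the step you yourself flag as delicate: the claim that the inf-convolution $\psi^h$ is a supersolution of a flow $(CMAF)_{\omega,\mu^h,F^h}$ with the \emph{same} time-dependent family $\omega_t$. When you inf-convolve in $t$, the supersolution inequality for $\psi^h$ at $(t_0,y_0)$ holds only with $\omega(t_0^*,y_0)$ for some $t_0^*$ in a $1/h$-neighbourhood of $t_0$; since $t_0^*$ may fall on either side of $t_0$, monotonicity by itself does not fix the sign of $\omega(t_0^*)-\omega(t_0)$, and mere continuity gives an $O(1/h)$ error that is far too coarse compared with the $o(\eta^n)$ tolerance in the matrix inequality $H^+\le H^-+o(\eta^n)$ of the doubling argument. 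Your opening observation that $\psi$ is also a supersolution of the constant flow $(CMAF)_{\omega_0,\mu,F}$ is correct but goes the wrong way: in the comparison you need an upper bound on $\det(\omega_{t_0}+B)$, and knowing only $\det(\omega_0+B)\le\cdots$ with $\omega_0\le\omega_{t_0}$ is useless. (Inf-convolving in the spatial variable, as you also propose, is separately problematic on a manifold where the equation is not translation-invariant; the paper regularises in $t$ only.)

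The paper's device is a \emph{time shift}. One Lipschitz-regularises \emph{both} $\varphi$ and $\psi$ in $t$ (the $\varphi^k,\psi_k$ of Corollary \ref{Cor2}), and then replaces them by $u^k(t,x):=\varphi^k(t-\alpha/k,x)$ and $v_k(t,x):=\psi_k(t+\alpha/k,x)$. After this shift the auxiliary time $t_0^*$ for $u^k$ necessarily satisfies $t_0^*\le t_0$, so $\omega(t_0^*)\le\omega(t_0)$ and the subsolution inequality transfers to the larger form $\omega(t_0)$; symmetrically $t_0^*\ge t_0$ for $v_k$, so the supersolution inequality transfers as well. One can then run the proof of Corollary \ref{Cor2} verbatim with $(u^k,v_k)$ in place of $(\varphi^k,\psi_k)$, the shifted $F$'s playing the role of $\hat F_k,\hat F^k$. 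This is the precise mechanism by which monotonicity enters, and it is missing from your sketch.
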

 
In order to lift this monotonicity condition, we introduce a slightly technical condition in addition to (\ref{Condition}). 

\begin{defi} \label{defi:oregular} Say $t\mapsto \omega_t$ is regular, if the following holds: 

For every positive real constant $ \varepsilon >0$ there exists $E(\e)> 0 $ such  that
$$ 
\forall t\in [0,T-2\e],  \ \forall t'\in ]t-\e,t+\e[,  \ (1+E(\varepsilon)) \omega_t \ge \omega_{t'} \ge (1-E(\varepsilon) ) \omega_t 
$$

and $E(\e) \to 0$ as $\e \to 0$. 
 
\end{defi}

 \begin{coro} \label{Cor4}  Assume that (\ref{Condition}) holds,  $t\mapsto \omega_t$ is regular in the sense of Definition \ref{defi:oregular}    
 and $\mu (t,x)=\mu(x)  \geq 0 $. 
Then for all $(t,x) \in [0,T[ \times X$,
$$
\f (t,x) - \p (t,x) \leq \max_{x \in X} (\f (0,x)  - \p (0,x))_+.
$$
\end{coro}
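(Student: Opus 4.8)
The plan is to reduce the "regular" case to the monotone case of Corollary \ref{Cor3} by a time-localization argument. The essential idea is that, although $t\mapsto\omega_t$ is not globally monotone, on each short time interval the hypothesis of Definition \ref{defi:oregular} forces $\omega_{t'}$ to be squeezed between $(1\pm E(\e))\omega_t$, which is almost as good as monotonicity up to a multiplicative error that we can absorb. First I would fix a reference time $t_0\in]0,T[$ and an interval $I=]t_0-\e,t_0+\e[$, and on $X\times I$ compare $\f$ with the supersolution of the flow associated to the \emph{frozen, inflated} form $\tilde\omega_{t}:=(1+E(\e))\omega_{t_0}$ (constant in $t$), together with a suitably rescaled volume form and a modified $F$. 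By Definition \ref{defi:oregular} we have $\omega_t\le\tilde\omega_{t_0}$ on $I$, so a subsolution $\f$ of the original flow is a subsolution of the comparison flow after a small correction; symmetrically, deflating by $(1-E(\e))$ turns a supersolution into a supersolution of a flow with a constant form $(1-E(\e))\omega_{t_0}$. One then applies Corollary \ref{Cor2} (constant $\omega$, $\mu=\mu(x)$) on the interval $I$ and lets $\e\to0$ so that the $E(\e)$-errors vanish.

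Carrying this out, the key steps in order are: (i) Record that by Corollary \ref{Cor3} we already know the statement when $t\mapsto\omega_t$ is monotone, and more precisely that the proof of that corollary produces, for any fixed $\theta\le\omega_t$, a quantitative comparison on subintervals; the new input is only how to handle the lack of global monotonicity. (ii) Introduce the perturbation: for $\e>0$ and $t_0$, set $\omega'_t:=(1+E(\e))\omega_{t_0}$ (a constant-in-$t$ semipositive form, still $\ge\theta$ and with $\{\theta\}^n>0$, so hypothesis (\ref{Condition}) persists). Check that if $\f$ is a subsolution of $(CMAF)_{\omega,\mu,F}$ then $\f$ is a subsolution of $(CMAF)_{\omega',\mu,F'}$ on $X\times I$, where $F'$ differs from $F$ by a bounded amount controlled by $E(\e)\log(1+E(\e))\cdot\sup|\log\mu|$-type terms — here one uses that $(\omega'_t+dd^c\f)^n\ge(\omega_t+dd^c\f)^n$ since $\omega'_t\ge\omega_t$, which pushes the inequality in the right direction for a subsolution. (iii) Symmetrically, deflating, $\p$ becomes a supersolution of the flow with form $(1-E(\e))\omega_{t_0}$ and $F$ shifted by a comparably small amount; the two frozen forms $(1\pm E(\e))\omega_{t_0}$ are \emph{constant} in $t$, so Corollary \ref{Cor2} (or rather its proof, applied on the strip $X\times I$ with the new Cauchy time $t_0$) gives $\f-\p\le\max_{x}(\f(t_0,x)-\p(t_0,x))_+ + o_{\e}(1)$ on $X\times[t_0,t_0+\e[$. (iv) Iterate over a partition $0=t_0<t_1<\cdots$ of $[0,T[$ with mesh $<\e$, telescoping the estimates, and finally let $\e\to0$ using $E(\e)\to0$ so that the accumulated error disappears and the max at intermediate times is dominated by the max at $t=0$.

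The technical heart — and the step I expect to be the main obstacle — is step (ii)–(iii): making precise, at the level of viscosity sub/supersolutions (not classical ones), that replacing $\omega_t$ by the nearby constant form $\omega'_t=(1+E(\e))\omega_{t_0}$ only costs an $o_\e(1)$ modification of the structural function $F$, uniformly on $X\times I$. One must test against $C^{1,2}$ test functions, use the inequality $(A+B)^n\ge A^n$ for semipositive $(1,1)$-forms (here $A=\omega_t+dd^c q$, $B=\omega'_t-\omega_t\ge0$ by regularity) to transfer the differential inequality, and then bound the resulting defect $\log\big((\omega'_t+dd^cq)^n/(\omega_t+dd^cq)^n\big)$ — which is \emph{not} obviously bounded since $q$ ranges over all test functions. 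The way around this is to compare instead with $(1+E(\e))^n(\omega_{t_0}+dd^c q')^n$ after the change of unknown $\f\mapsto(1+E(\e))^{-1}\f$ à la Lemma \ref{chgvar}, which turns the inflation into an honest rescaling of the equation with an explicit, bounded change in $F$ and in $\mu$; I would organize step (ii) around that substitution rather than a brute-force estimate. Once this reduction to a constant form on each small strip is in place, the remaining steps are routine applications of Corollary \ref{Cor2} and a telescoping argument, and the limit $\e\to0$ is immediate from $E(\e)\to0$.
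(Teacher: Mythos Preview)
Your rescaling idea $\f\mapsto(1+E(\e))^{-1}\f$ is exactly right and is what the paper uses. However, your global strategy (iv) has a genuine gap: the error accumulates under telescoping. On each strip of width $\e$ you produce an estimate of the form
\[
\max_{x}\bigl(\f(t_{j+1},x)-\p(t_{j+1},x)\bigr)_+\le
\max_{x}\bigl(\f(t_j,x)-\p(t_j,x)\bigr)_+ + C\,E(\e),
\]
the $C\,E(\e)$ coming from the rescaling and the shift in $F$. After $N\sim T/\e$ iterations the total defect is $O\bigl(T\,E(\e)/\e\bigr)$, and Definition~\ref{defi:oregular} only gives $E(\e)\to 0$, not $E(\e)=o(\e)$. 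Since $\omega_t$ is merely $\ge\theta$ with $\theta$ semipositive and possibly degenerate, one cannot upgrade the hypothesis to $E(\e)=O(\e)$; so the accumulated error need not vanish and the argument does not close.

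The paper avoids this by replacing your time-localization with a \emph{global} time-Lipschitz regularization, exactly the device used in the proof of Corollary~\ref{Cor2}. One forms the sup/inf convolutions $\f^k,\p_k$ in the $t$-variable; by \cite[Lemma~2.5]{EGZ14} and Definition~\ref{defi:oregular}, $\f^k$ is a subsolution for $\bigl((1+E(\alpha/k))\omega_t,F_k,\mu\bigr)$ and $\p_k$ a supersolution for $\bigl((1-E(\alpha/k))\omega_t,F^k,\mu\bigr)$ on all of $X_T$. Now apply your rescaling once, globally: $\f_\star^k:=\f^k/(1+E(\alpha/k))$ is a subsolution for $(\omega_t,F_k-n\log(1+E(\alpha/k)),\mu)$ and similarly for $\p_{\star k}$. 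These are Lipschitz in $t$, so one runs the argument of Corollary~\ref{Cor2} verbatim and passes to the single limit $k\to\infty$; since $E(\alpha/k)\to 0$, the shifts in $F$ and the rescaling factors disappear, with no iteration and hence no accumulation of error.
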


Regularity in the sense of Definition \ref{defi:oregular}   holds true if, in addition to (\ref{Condition}), we require $\theta$ to be K\"ahler. 
It should be remarked that when $\omega_t$ is a smooth family of K\"ahler forms and $\mu$ is a smooth positive volume form, the optimal comparison principle 
holds true and follows from the existence of a classical solution to the Cauchy problem. 
However, one needs Corollary \ref{Cor4} to obtain it by the present methods, 
the other versions being too weak.  
On the other hand, the following generalization of this remark covers many cases of interest:

\begin{lem} 
Let $\pi: X\to Y$ be a bimeromorphic morphism onto a normal K\"ahler variety. Assume $\omega_t^Y$ is a continuous family of {\bf{smooth}}
K\"ahler forms on $Y$. Then $\omega_t=\pi^*\omega_t^Y$ satisfies (\ref{Condition}) and is regular in the sense of Definition \ref{defi:oregular}. 
\end{lem}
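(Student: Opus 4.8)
The plan is to verify the three nontrivial assertions separately --- clauses (a) and (b) of (\ref{Condition}), and the condition of Definition \ref{defi:oregular} --- since clauses (c) and (d) of (\ref{Condition}) bear on $F$ alone and have nothing to do with $\pi$. Throughout I work under the (harmless --- see the last paragraph) assumption that $t\mapsto\omega_t^Y$ is a continuous family of smooth K\"ahler forms over the \emph{compact} interval $[0,T]$. Fix a smooth K\"ahler form $\omega_Y$ on $Y$ (say $\omega_Y=\omega_0^Y$). By compactness of $[0,T]\times Y$ there is then a constant $c>0$ with $\omega_t^Y\ge c\,\omega_Y$ on $Y$ for every $t\in[0,T]$, and by uniform continuity of $t\mapsto\omega_t^Y$ on $[0,T]$ there is a modulus $\eta$, with $\eta(\delta)\to0$ as $\delta\to0$, such that $-\eta(|t-t'|)\,\omega_Y\le\omega_{t'}^Y-\omega_t^Y\le\eta(|t-t'|)\,\omega_Y$ on $Y$ for all $t,t'\in[0,T]$.

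For (a) I would simply record that $X$ is K\"ahler: in the applications $\pi$ is a projective morphism onto the projective variety $Y$, so $X$ is itself projective, hence K\"ahler, and in general one invokes that a modification of a compact K\"ahler variety is K\"ahler. For (b) I take $\theta:=c\,\pi^*\omega_Y$, with $c$ and $\omega_Y$ as above: this is a smooth closed semipositive $(1,1)$-form on $X$ (the pullback of a semipositive form is semipositive), and $\omega_t-\theta=\pi^*(\omega_t^Y-c\,\omega_Y)\ge0$ for every $t$. Finally $\pi$ is a biholomorphism over the complement of a proper analytic --- hence Lebesgue-negligible --- subset of $Y$, so
\[
\{\theta\}^n=c^n\int_X(\pi^*\omega_Y)^n=c^n\int_Y\omega_Y^n=c^n\{\omega_Y\}^n>0
\]
because $\omega_Y$ is K\"ahler on $Y$; this gives (b).

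For the regularity condition I would first observe that $\pi^*$ turns any inequality $(1\pm E)\,\alpha\ge\beta$ between closed real $(1,1)$-forms into the same inequality for $\pi^*\alpha$ and $\pi^*\beta$ (again, $\pi^*$ sends semipositive forms to semipositive ones), so it is enough to check Definition \ref{defi:oregular} for the family $\omega_t^Y$ on $Y$. Combining the two uniform estimates recorded in the first paragraph yields, for all $t,t'\in[0,T]$,
\[
-\tfrac{\eta(|t-t'|)}{c}\,\omega_t^Y\ \le\ \omega_{t'}^Y-\omega_t^Y\ \le\ \tfrac{\eta(|t-t'|)}{c}\,\omega_t^Y,
\]
i.e. $(1-E(\varepsilon))\,\omega_t^Y\le\omega_{t'}^Y\le(1+E(\varepsilon))\,\omega_t^Y$ whenever $|t-t'|\le\varepsilon$, with $E(\varepsilon):=\eta(\varepsilon)/c\to0$ as $\varepsilon\to0$ --- which is even stronger than Definition \ref{defi:oregular} demands (the latter only constraining $t\le T-2\varepsilon$). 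Pulling back by $\pi$ gives the corresponding statement for $\omega_t=\pi^*\omega_t^Y$, and finishes the verification.

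The one genuinely delicate point --- and the reason for the assumption made in the first paragraph --- is the behaviour of $\omega_t^Y$ as $t$ approaches the (possible) blow-up time $T$ of the flow. There $\omega_t^Y$ may degenerate, in which case neither $c$ nor the ratio $\eta(\varepsilon)/c$ can be chosen uniformly and $E(\varepsilon)$ need not tend to $0$ on the half-open interval $[0,T[$ --- an affine family $(1-t)\,\omega_Y+t\,\beta$ with $\beta$ semipositive and degenerate, degenerating at $t=1$, already exhibits this. This costs nothing here: the comparison conclusions are pointwise, so it suffices to argue on $[0,T']\times X$ for every $T'<T$ (on $[0,T']$ the family consists of genuine K\"ahler forms, hence is uniformly non-degenerate, and the above applies verbatim with $T$ replaced by $T'$); and when the limiting K\"ahler class is itself K\"ahler --- for instance for the canonical model, where $K_Y$ is ample --- the family $\omega_t^Y$ extends to a continuous family of K\"ahler forms up to $t=+\infty$ and nothing needs to be shrunk.
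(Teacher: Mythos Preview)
Your argument is correct, and in fact the paper gives no proof of this lemma at all --- it is stated as an observation and left to the reader. Your verification of (b) via $\theta=c\,\pi^*\omega_Y$ and of regularity via a uniform modulus of continuity on $Y$ followed by pullback is exactly the straightforward argument the authors have in mind.

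One small caveat on (a): the general claim that ``a modification of a compact K\"ahler variety is K\"ahler'' is not literally true --- a compact complex manifold bimeromorphic to a K\"ahler manifold lies in Fujiki's class $\mathcal{C}$ but need not itself be K\"ahler. What saves you here is that in every application in the paper $\pi$ is a log-resolution, hence a projective morphism, and a manifold projective over a K\"ahler space is K\"ahler (one can add a small multiple of a relatively ample class to the pullback of a K\"ahler form). You essentially say this in your first alternative, so the point is only that the second, more general, alternative you offer as a fallback is not actually available without further hypotheses. Your closing discussion of the behaviour near $T$ is a sensible clarification and is consistent with how the comparison principle is used elsewhere in the paper (always on $[0,T']$ for $T'<T$).
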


\subsection{Proofs}
We start by proving the Theorem and give the proof of the corollaries afterwards.
 
 \begin{proof}

We first establish a slightly more general estimate (\ref{eq:FIneq0}) assuming $\mu > 0$
is positive.

Namely let   $\mu (t,x) >  0$ and $\nu (t,x) \ge 0$ be  two positive continuous  volume forms  on $X_T$  and $F,G : \R^+  \times X \times \R \longrightarrow \R$ two continuous functions.
Let $\f$  be a bounded subsolution to the  parabolic complex Monge-Amp\`ere equation (\ref{eq:PCMA}) associated to $(\omega,F,\mu)$  in $X_T$ and $\p$ be a bounded  supersolution to the parabolic 
complex Monge-Amp\`ere equation (\ref{eq:PCMA}) associated to $(\omega,G,\nu)$  in $X_T$. We assume furthermore that $\partial_t \psi \ge -C$ locally on $X_T$. 
  
We are going to show that for any fixed $\delta > 0$ small enough, either there exists a point  $(\hat t, \hat x) \in ]0,T[ \times X$  where
the function defined by 
$$
\tilde \f (t,x) - \p (t,x) := \f (t,x) - \frac{\delta}{T -t} - \p (t,x) 
$$
 achieves its maximum on $X_T$ and the following inequality is satisfied
\begin{equation} \label{eq:FIneq0}
 e^{\frac{\delta}{(T -\hat t)^2} +  F (\hat t,\hat x, \tilde \f (\hat t,\hat x))}  \mu (\hat t,\hat x) \leq e^{ G (\hat t,\hat x, \p (\hat t,\hat x))} \nu (\hat t,\hat x),
\end{equation}
or this maximum is achieved at some point $(0, \hat x)$ on the parabolic boundary. 
This is a global version of (\cite{EGZ14}, Lemma 3.1).

 Choose a large constant $C>0$ such that $\f$ and $\p$ are both  $\le C/ 4$ in $L^{\infty}$-norm and fix $\delta>0$ arbitrarily small.

Since $ \tilde \f - \p$ is upper semicontinuous in $[0,T[ \times X$ and tends to $- \infty$ when $t \to T^-$, the maximum of  $\f - \p$ is achieved at some point $(t_0,x_0) \in [0,T[ \times \Omega$ i.e.
$$
M := \sup_{(t,x) \in [0,T[ \times X} (\tilde \f (t,x) - \p  (t,x)) = \tilde \f (t_0,x_0) - \p  (t_0,x_0).
$$
and there exists $T' < T$ such that it cannot be achieved in $ [T',T[ \times X$ i.e.$t_0 \in [0,T'[ \times X$.

If $t_0 = 0$ then we obtain  for any $(t,x) \in X_T$,
\begin{equation} \label{eq:Case1}
 \tilde \f (t,x) - \p  (t,x) \leq M = \tilde \f (0,x_0) - \p  (0,x_0) = \max_{X} (\tilde \f (0,x) - \p  (0,x)).
\end{equation}
\smallskip

We now focus on the most delicate case when $t_0 \in ]0,T'[$ and assume that the maximum  $M$ of $\tilde \f (t,x) - \p  (t,x)$ is not achieved in $\{0\} \times X$, nor in $ [T',T[ \times X$ i.e.
\begin{equation} \label{eq:Strict}
M  >  \max_{X'_T} \{\tilde \f (t,x) - \p (t,x) \},  \, \text{where} \, X'_T := \{0\} \times X \cup [T',T[ \times X
\end{equation}

The idea is to localize near the point $x_0$ and apply the parabolic Jensen-Ishii's maximum principle from \cite{EGZ14}). 
Choose complex coordinates 
$z = (z^1, \ldots, z^n)$ near $x_0$
defining a biholomorphism identifying an open neighborhood of $x_0$ to the complex ball $B_4 := B(0,4) \subset \C^n$ of radius $4$, sending $x_0$ to the origin in $\C^n$. 

Observe that $\tilde \f$ is upper semi-continuous and satisfies,  in  $X_T = ]0,T[ \times X$, 
the viscosity differential inequality
 $$
  e^{\partial_t \tilde \f +  \frac{\delta}{(T -t)^2} +  F (t,x,\tilde \f +  \frac{\delta}{T -t})}  \mu (t,x)  \leq  (\omega +dd^c \tilde \f)^n.
 $$
We let $h (t,x)$  be a continuous local potential for $\omega$ such that $\partial_t h$ is continuous in $[0,T[ \times B_4$ i.e.  $dd^c  h = \omega $ in $[0,T[ \times B_4$. 
We may without loss of generality assume that $C$ is choosen so large that $\| h \|_{\infty} < C/ 4$.

Consider the  upper semi-continuous function 
$$
 \tilde u (t, \zeta) :=  \tilde \f (t,z^{- 1} (\zeta)) + h (t, z^{- 1} (\zeta)),
$$

Then $\tilde u$ satisfies the viscosity differential inequality 
\begin{equation} \label{eq:subsol}
e^{\partial_t \tilde u  + \frac{\delta}{(T -t)^2} + \tilde F (t,\zeta,  \tilde u)} \tilde \mu (t,\zeta)  \leq 
(dd^c \tilde u)^n, \, \, 
\text{ in } ]0,T[ \times B_4,
\end{equation}
where 
$\tilde \mu :=  z_{*} (\mu)\geq 0$ is a  continuous volume form on $B_4$ and 
$$
\tilde F (t,\zeta,r) = F \left(t,x, r - h (t, x)  + \frac{\delta}{T -t}\right) - \partial_t h (t, x), 
$$
where $x := z^{- 1} (\zeta).$

In the same way,  the lower semi-continuous function 
$$
v (t, \zeta) := \p (t,z^{- 1} (\zeta)) + h (t,z^{- 1} (\zeta))
$$
 satisfies the viscosity differential inequality
\begin{equation} \label{eq:supersol}
e^{\partial_t v + \tilde G (t,z,v)} \tilde \nu  (\zeta)  \geq (dd^c v)^n, \, \, 
\text{ in } B_4,
\end{equation} 
where $\tilde \nu :=  z_{*} (\nu) \geq 0$ is a positive and continuous volume form on $B_4$ and 
 $\tilde G \left(t,\zeta,r) = G (t,x,r - h (t,x)\right) - \partial_t h (t, x)$, with $x :=  z^{- 1} (\zeta)$. 

Observe that the functions $\tilde F$ and $\tilde G$ are continuous in $[0,T[ \times B_4$ since $\partial_t h$ is continuous.

\smallskip

Then we have 
\begin{equation} \label{eq:LocMax}
M  =  \tilde u (t_0,0) - v (t_0,0)  =  \max_{[0,T'] \times  \bar B_3} (\tilde u (t,\zeta) -  v(t,\zeta)).
\end{equation}

We are going  to estimate the number $M$ by applying the parabolic version of Jensen-Ishii's maximum principle. 

As in the local case we use a penalization method (\cite{CIL92}, \cite{EGZ14}) but we need the localizing trick of \cite{EGZ11} which consists in introducing a new localizing penalization function. For $\e>0$, we consider the function defined in 
$[0,T[ \times B_4 \times B_4$ by
 $$
 (t,x,y) \longmapsto \tilde  u (t,x) - v  (t,y)  - \sigma (x,y) - (1 \slash 2 \e) \vert x - y\vert^2, 
 $$
 where   $\sigma $ is the localizing penalization function constructed in \cite{EGZ11}. This a non negative  smooth function $\sigma (x,y) \geq 0$ in $X^2$ which vanishes to high order only on the diagonal near the origin $(0,0)$ and is large enough on the boundary of the ball $B_3 \times B_3$ so that $\sigma \geq 3 C$ on $\bar  B_4^2 \setminus B_2^2$, to force the maximum to be attained at an interior point.

The role of function $\sigma$ is to force the maximum to be asymptotically attained  along the diagonal (as in the degenerate elliptic case, see \cite{EGZ11}). The fact that
 the second derivative of $\sigma$ is a quadratic form  on $\R^{2n} \times \R^{2 n}$ which vanishes on the diagonal is going to be crucial in the sequel.

\smallskip

Observe that since $\sigma (0,0) = 0$, we also have
\begin{equation} \label{eq:theta}
M =  \max_{[0,T'] \times  \bar B_3} (\tilde u (t,\zeta) -  v(t,\zeta) - \sigma (\zeta,\zeta).
\end{equation}

Since we are maximizing an upper semi-continuous function on the compact set $[0,T[ \times \bar B_3^2$,
there exists $(t_\e,x_{{\e}},y_{\e}) \in [0,T[ \times \bar B_3 \times \bar B_3$ such that
\begin{eqnarray*}
M_{{\e}}& := &\sup_{(t,x,y) \in [0,T'] \times \bar B_3^2} \left\{\tilde u (t,x)- v (t,y) - \sigma(x,y) -\frac{1}{2 \e} \vert x - y\vert^2 \right\}\\
&=& \tilde u (t_\e,x_{{\e}})- v  (t_\e,y_{{\e}}) -\sigma(x_{{\e}},y_{{\e}})-\frac{1}{2\e} \vert x_{\e} - y_{\e}\vert^2.
\end{eqnarray*}
Observe that $\f, \p, h$ are bounded by $C \slash 4$ in the $L^{\infty}$-norm in $[0,T[ \times \bar B_4$, while 
$ \sigma \geq 3 C $ on $\bar B_4^2 \setminus B_2^2$. 
Therefore for any $\e$, we have
   
 \begin{equation} \label{eq:Min}
   M_{\e} \ge M =  \max_{\bar B_3} ( \tilde  u (0,x) - v (0,x)) \geq - 3  C \slash 4 - \delta \slash T.
 \end{equation}
On the oter hand,  for $(t,x,y) \in  [0,T[ \times B_3^2 \setminus B_2^2$, we have
 \begin{equation} \label{eq:Max}
 u (t,x)- v (t,y) - \sigma(x,y) -\frac{1}{2 \e} \vert x - y\vert^2  \leq  +  C - 3 C  = - 2 C.
 \end{equation}
 Therefore if we assume  $0 < \delta < C T \slash 4$, then 
 for any $\e > 0$ small enough, we have  $(t_\e,x_{{\e}}, y_{{\e}}) \in [0,T'] \times B_2^2$. 

 The  following result is classical (see \cite[Proposition 3.7]{CIL92}):

 \begin{lem} \label{asymp}
We have $ \vert x_{\e} - y_{\e}\vert^2  = o ({\e})$. Every limit
point $(\hat t,\hat x, \hat y)$ of $(t_\e,x_{\e}, y_{\e})$ satisfies $\hat x= \hat y$, $(\hat x , \hat x) \in \Delta \cap \bar B_2^2$, $ \hat t \in [0,T]$ and 
$$
\lim_{{\e} \to 0} M_\e = \lim_{{\e} \to 0}  (\tilde  u (t_\e,x_{\e})- v (t_\e,y_{\e}) - \sigma (x_{\e},y_{\e}) 
=  \tilde  u (\hat t,\hat x)- v (\hat t,\hat x) - \sigma (\hat x,\hat x).
$$
Moreover  $(\hat t,\hat x) \in ]0,T[ \times B_2$.
\end{lem}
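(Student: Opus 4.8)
The plan is to exploit the standard penalization bookkeeping from \cite[Proposition 3.7]{CIL92}, adapted to the extra localizing term $\sigma$. First I would record the elementary monotonicity $M_\e \le M_{\e'}$ for $\e < \e'$ together with the two-sided bound already available: from \eqref{eq:Min} we have $M_\e \ge M$, and from the definition of $M_\e$ together with $\|\tilde u\|_\infty, \|v\|_\infty \le C/4+\delta/T$ and $\sigma \ge 0$ we get $M_\e \le \tilde u(t_\e,x_\e) - v(t_\e,y_\e) - \sigma(x_\e,y_\e) \le 2(C/4+\delta/T)$. Hence $M_\e$ is bounded and non-increasing as $\e \downarrow 0$, so $\lim_{\e\to 0} M_\e =: L$ exists with $L \ge M$. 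Feeding this back into
$$
\frac{1}{2\e}\,|x_\e - y_\e|^2 \;\le\; \tilde u(t_\e,x_\e) - v(t_\e,y_\e) - \sigma(x_\e,y_\e) - M_\e
$$
and using $\limsup_{\e\to 0}\big(\tilde u(t_\e,x_\e)-v(t_\e,y_\e)-\sigma(x_\e,y_\e)\big) \le \tilde u(\hat t,\hat x)-v(\hat t,\hat y)-\sigma(\hat x,\hat y)$ along a convergent subsequence (upper semicontinuity of $\tilde u$, lower semicontinuity of $v$, continuity of $\sigma$) shows that $\frac{1}{2\e}|x_\e-y_\e|^2$ stays bounded, whence $|x_\e - y_\e|\to 0$, and in fact $\frac{1}{2\e}|x_\e-y_\e|^2 = o(1)$; a second iteration of the same inequality upgrades this to $|x_\e-y_\e|^2 = o(\e)$ in the usual way. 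In particular every limit point $(\hat t,\hat x,\hat y)$ of $(t_\e,x_\e,y_\e)$ (which exists by compactness of $[0,T']\times\bar B_2^2$, thanks to the confinement $(t_\e,x_\e,y_\e)\in[0,T']\times B_2^2$ already proved via \eqref{eq:Max}) satisfies $\hat x = \hat y$, so $(\hat x,\hat x)\in \Delta\cap\bar B_2^2$ and $\hat t\in[0,T']\subset[0,T]$.

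Next I would identify the limit: along the subsequence, using $\frac{1}{2\e}|x_\e-y_\e|^2\to 0$,
$$
L \;=\; \lim_\e M_\e \;=\; \lim_\e\big(\tilde u(t_\e,x_\e) - v(t_\e,y_\e) - \sigma(x_\e,y_\e)\big).
$$
Combining $L \ge M$ with the semicontinuity inequality $\lim_\e(\cdots) \le \tilde u(\hat t,\hat x) - v(\hat t,\hat x) - \sigma(\hat x,\hat x) \le \tilde u(\hat t,\hat x)-v(\hat t,\hat x)$ gives $\tilde u(\hat t,\hat x) - v(\hat t,\hat x) \ge M$; but $M$ is the maximum of $\tilde u - v$ over $[0,T']\times \bar B_3$ by \eqref{eq:LocMax} (and $\sigma\ge 0$), so all these inequalities are equalities: $\sigma(\hat x,\hat x)=0$ (already automatic), $L = M$, and $\tilde u(\hat t,\hat x)-v(\hat t,\hat x)=M$, which is the displayed limit formula. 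Along the way this also forces $\tilde u(t_\e,x_\e)\to \tilde u(\hat t,\hat x)$ and $v(t_\e,y_\e)\to v(\hat t,\hat x)$ separately (an u.s.c.\ term and an l.s.c.\ term whose difference converges to the sum of their individual lim sup/lim inf bounds must each converge).

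Finally I would locate $(\hat t,\hat x)$ in the open set $]0,T[\times B_2$. Interiority in the ball, $\hat x \in B_2$, follows from the confinement $x_\e \in B_2^2$ upgraded by the separation estimate \eqref{eq:Max}, which shows the maximizers stay a definite distance inside $B_2$ (the value on $B_3^2\setminus B_2^2$ is $\le -2C$, strictly below $M_\e \ge M \ge -3C/4-\delta/T$), so $\hat x\in B_2$. The point requiring the structural hypotheses of the Theorem is $\hat t > 0$: here I would invoke the strict inequality \eqref{eq:Strict}, i.e.\ $M > \max_{X'_T}(\tilde\f-\p)$ on the parabolic boundary $X'_T = \{0\}\times X \cup [T',T[\times X$. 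Since $L = M = \tilde u(\hat t,\hat x)-v(\hat t,\hat x) = \tilde\f(\hat t,\hat x)-\p(\hat t,\hat x) + \text{(potential terms that cancel)}$ and the boundary values are strictly smaller, $\hat t$ cannot be $0$, and $\hat t < T'< T$ by the choice of $T'$; hence $\hat t\in]0,T[$. The main obstacle, such as it is, is purely bookkeeping: keeping the role of the localizing term $\sigma$ straight (it is nonnegative, vanishes on the diagonal, and is $\ge 3C$ off $B_2^2$) so that it simultaneously confines the maximizers and drops out in the limit; there is no genuine analytic difficulty, which is why this is quoted as "classical."
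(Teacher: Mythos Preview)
Your overall strategy is correct and matches the paper's: invoke \cite[Proposition~3.7]{CIL92} for the penalization bookkeeping, then run the squeeze
\[
M \;\le\; \lim_{\e\to 0} M_\e \;\le\; \tilde u(\hat t,\hat x)-v(\hat t,\hat x)-\sigma(\hat x,\hat x) \;\le\; M
\]
to identify the limit and force the equalities. Your treatment of $\hat t\in{]0,T[}$ via \eqref{eq:Strict} is also what the paper does.

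There is, however, a genuine gap in your argument for $\hat x\in B_2$ (the \emph{open} ball), and it comes from a misreading of the localizing function $\sigma$. You write that $\sigma(\hat x,\hat x)=0$ is ``already automatic''. It is not: the function $\sigma$ from \cite{EGZ11} does \emph{not} vanish on the entire diagonal; it vanishes (to high order) only on the portion of the diagonal near the origin, while $\sigma\ge 3C$ on $\bar B_4^2\setminus B_2^2$. So $\sigma(\hat x,\hat x)=0$ is substantive information, and you did derive it correctly from your chain of equalities --- you just dismissed it.

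This matters because your stated reason for $\hat x\in B_2$, namely the separation estimate \eqref{eq:Max}, is insufficient: that inequality only confines each $(x_\e,y_\e)$ to the open product $B_2^2$, which does not rule out $\hat x\in\partial B_2$ in the limit. There is no ``definite distance inside $B_2$'' coming from \eqref{eq:Max} alone. The correct argument, which the paper gives, is to use what you already proved: from $\sigma(\hat x,\hat x)=0$ together with the construction of $\sigma$ ensuring $\Delta\cap\sigma^{-1}(0)\subset B_2^2$, one concludes $\hat x\in B_2$. Replace your last paragraph's appeal to \eqref{eq:Max} with this, and the proof is complete.
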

\begin{proof}

 Observe that the first part of lemma is a consequence of \cite[Proposition 3.7]{CIL92}. To prove the second part we use following easy observation. From the first part of the lemma, using  (\ref{eq:theta}) and (\ref{eq:Min}), we deduce that 
$$
M \leq \lim_{{\e} \to 0} M_\e =  \tilde  u (\hat t,\hat x)- v (\hat t,\hat x) - \sigma (\hat x,\hat x) \leq M - \sigma (\hat x,\hat x),
$$
hence $ \sigma (\hat x,\hat x) = 0$. 
Since by construction $\Delta \cap \sigma^{-1} (0) \subset B_2^2$, it follows that $\hat x \in B_2$.
\end{proof}

It follows from  (\ref{eq:Strict}) that $(\hat t,\hat x) \in ]0,T'[ \times B_2$, hence  it is an interior point of $[0,T'] \times \bar B_3^2$. Thus there exist a sequence $(t_{\e_j},x_{\e_j},y_{\e_j}) \in ]0,T'[ \times B_2$ which converges to $(\hat t, \hat x)$ such that the conditions of the Lemma are satisfied.
 
We now apply the parabolic Jensen-Ishii's  maximum principle (see \cite{EGZ14}) to $u$ and $v$  with  $\phi (t,x,y)  =\frac{1}{2\e} \vert x - y \vert^2 + \sigma (x,y)$. 
For $j >> 1$,
we get the following:

\begin{lem}\label{main}
For any $\gamma > 0$, we can find $(\tau^+_j,p_j^+, Q_j^+), (\tau_j^-,p^-_j, Q^-_j)\in \R \times \C^n \times Sym_{\R}^2 (\C^n)$
such that
\begin{enumerate}
 \item $(\tau_j^+,p_j^+, Q_j^+)\in \overline{\mathcal P}^{2+} u (t_{\e_j},x_{\e_j})$, $(\tau_j^-,p_j^-, Q_j^-)\in 
\overline{\mathcal P}^{2-} v (y_{\e_j})$, where 
\begin{eqnarray*}
p_j^+ &=& D_x \sigma (x_{\e_j},y_{\e_j}) + \frac{(x_{\e_j} - y_{\e_j})}{2 {\e_j}},  \\ 
p_j^- &= & - D_y \sigma (x_{\e_j},y_{\e_j}) - \frac{(x_{\e_j} - y_{\e_j})}{2 {\e_j}}, \\
\tau^+_j &= &\tau_j^- + \frac{\delta}{(T-t_{\e_j})^2}\cdot
\end{eqnarray*}
\item The block diagonal matrix with entries $(Q^+_j⁺, Q^-_j)$ satisfies:
$$
-(\gamma^{-1}+ \| A \| ) I \le 
\left(
\begin{array}{cc}
Q^+_j⁺ & 0 \\
0 & -Q^-_j
\end{array}
\right)
\le A+\gamma A^2, 
$$
where $A=D^2\phi(x_{\e_j}, y_{\e_j})$, i.e.
$$A =\e_j^{-1}
\left(
\begin{array}{cc}
I &  -I\\
-I &  I
\end{array}
\right) + D^2\sigma (x_{\e_j}, y_{\e_j})$$
and $\| A \|$ is the spectral radius of $A$ (maximum of the absolute values for the eigenvalues of this symmetric
matrix). 
\end{enumerate}
\end{lem}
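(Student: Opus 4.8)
The statement of Lemma \ref{main} is, after transporting everything through the chart $z$ and the local potential $h$, precisely the parabolic Jensen--Ishii maximum principle --- the ``theorem on sums'' for parabolic problems, see \cite[Theorem 8.3]{CIL92} and its complex Monge--Amp\`ere counterpart in \cite{EGZ14} --- applied at the interior maximum point supplied by Lemma \ref{asymp}. The plan is to set up the data, verify the hypotheses of that theorem, and read off the conclusion by differentiating the penalization function. Absorbing into the test function the backward‑in‑time penalization $\frac{\delta}{T-t}$ (which had been built into $\tilde\f$), put $\phi(t,x,y):=\frac{\delta}{T-t}+\frac1{2\e_j}|x-y|^2+\sigma(x,y)$, $u:=\f\circ z^{-1}+h\circ z^{-1}$ and $v:=\p\circ z^{-1}+h\circ z^{-1}$. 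Then $\phi$ is smooth on $[0,T'[\times B_4\times B_4$; the map $(t,x,y)\mapsto u(t,x)-v(t,y)-\phi(t,x,y)$ has a local maximum at $(t_{\e_j},x_{\e_j},y_{\e_j})$ (the same point as the penalized maximum $M_{\e_j}$ found above), which by Lemma \ref{asymp} lies in the interior $]0,T'[\times B_2\times B_2$ as soon as $j\gg 1$; and $u$ satisfies (\ref{eq:subsol}) rewritten via $u=\tilde u+\frac{\delta}{T-t}$, while $v$ satisfies (\ref{eq:supersol}) --- both being parabolic complex Monge--Amp\`ere (in)equalities with continuous coefficients ($\tilde F,\tilde G$ are continuous because $\partial_t h$ is) and with $\tilde\mu>0$, since we are in the case $\mu>0$.

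The one hypothesis of the parabolic theorem on sums which is not automatic is the one‑sided bound on the time slots of the jets: there must be $r>0$ such that, for each $M>0$, there is $C_M$ with $\tau\le C_M$ whenever $(\tau,q,Q)\in\mathcal P^{2+}u(t,x)$, $(t,x)$ within distance $r$ of the maximum point, $|u(t,x)|+|q|+\|Q\|\le M$, and dually $\tau\ge -C_M$ for $(\tau,q,Q)\in\mathcal P^{2-}v(t,y)$. The first follows from the subsolution inequality for $u$: on a fixed compact neighbourhood of the maximum point $\tilde\mu$ is bounded below by a positive constant and $|u|$ is bounded, so $\tilde F$ is bounded, while the complex Monge--Amp\`ere quantity attached to $Q$ is $\le\|Q\|^n\le M^n$, whence $e^{\tau}$ is bounded above. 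The second is precisely the standing assumption $\partial_t\p\ge -C$ in the viscosity sense, read through the chart and the $C^1$ potential $h$ (whose time derivative is continuous, hence bounded there). Granting these bounds, \cite[Theorem 8.3]{CIL92} applies and produces, for every $\gamma>0$, jets $(\tau^+_j,p^+_j,Q^+_j)\in\overline{\mathcal P}^{2+}u(t_{\e_j},x_{\e_j})$ and $(\tau^-_j,p^-_j,Q^-_j)\in\overline{\mathcal P}^{2-}v(t_{\e_j},y_{\e_j})$ with $p^+_j=D_x\phi$ and $p^-_j=-D_y\phi$ at $(x_{\e_j},y_{\e_j})$, with $\tau^+_j-\tau^-_j=\partial_t\phi(t_{\e_j},\cdot)=\frac{\delta}{(T-t_{\e_j})^2}$, and with the block matrix $\bigl(\begin{smallmatrix}Q^+_j&0\\0&-Q^-_j\end{smallmatrix}\bigr)$ squeezed between $-(\gamma^{-1}+\|A\|)I$ and $A+\gamma A^2$ for $A=D^2_{(x,y)}\phi(x_{\e_j},y_{\e_j})$. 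Finally, differentiating the explicit $\phi$ gives the stated multiples of $(x_{\e_j}-y_{\e_j})$ in $p^\pm_j$, the contributions $\pm D_{x,y}\sigma(x_{\e_j},y_{\e_j})$ of $\sigma$, and $A=\e_j^{-1}\bigl(\begin{smallmatrix}I&-I\\-I&I\end{smallmatrix}\bigr)+D^2\sigma(x_{\e_j},y_{\e_j})$ --- which is exactly (1) and (2).

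The genuinely delicate point is the verification of that parabolicity hypothesis, i.e. that the abstract machinery of \cite{CIL92} really does apply to our degenerate complex Monge--Amp\`ere flow; this is where the positivity of $\mu$ (hence of $\tilde\mu$) and the a priori lower bound $\partial_t\p\ge -C$ on the supersolution are used essentially --- exactly as in the local analysis of \cite{EGZ14}. Everything else --- transporting the statement through the biholomorphism $z$ and the potential $h$, checking that the chart‑transformed coefficients stay continuous, and the elementary differentiations of $\phi$ --- is routine.
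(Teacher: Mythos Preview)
Your argument is correct and follows essentially the same route as the paper: apply \cite[Theorem~8.3]{CIL92} (with $u_1=u$, $u_2=-v$) at the interior maximum furnished by Lemma~\ref{asymp}, verify condition~(8.5) using $\mu>0$ for the subsolution side and the viscosity bound $\partial_t\psi\ge -C$ (transported through $h$) for the supersolution side, and read off the formulas for $p_j^\pm$, $\tau_j^\pm$, $A$ by differentiating the penalization. The only cosmetic difference is that you absorb the term $\delta/(T-t)$ into the test function $\phi$ rather than into $\tilde u$, which is an equivalent bookkeeping choice yielding the same relation $\tau_j^+-\tau_j^-=\delta/(T-t_{\e_j})^2$.
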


\begin{proof} The proof, just like in \cite[section 3]{EGZ14}, consists in applying \cite[Theorem 8.3]{CIL92} with $u_1=u$,  $u_2=-v$. 
Observe that since the situation is  localized near $(\hat t, \hat x)$ in $]0,T'[ \times B_2$, the local lower bound on $\partial_t \p$ and a 
local lower bound on $\partial_t h$ implies a local lower bound  $ \partial_t v \geq - C$ near the point $(\hat t, \hat x)$ with a larger constant $C > 1$.
This permits to fulfill condition (8.5) on $-v$ when applying \cite[Theorem 8.3]{CIL92}. 
Also, we use $\mu>0$ to see that condition (8.5) in loc. cit. is satisfied by $u$. 
Observe also that $\tau_j^- \geq - C$ for $j >$ large enough (see the Remark following Proposition 1.6 in \cite{EGZ14}). 
\end{proof}

By construction, the Taylor series of $\sigma$ at any point in
 $\Delta \cap \sigma^{-1} (0)$ vanishes up to order $2n+2$.
In particular, 
$$
 D^2\sigma (x_{\e_j}, y_{\e_j}) =O(\vert x_{\e_j}- y_{\e_j}\vert^{2n})
=o(\e_j^{n}).$$
This implies $\|A \|\simeq 1 \slash \e_j$. We choose $\gamma = \e_j$ and deduce
$$
-(2\e_j^{-1} ) I \le 
\left(
\begin{array}{cc}
Q_j^+ & 0 \\
0 & -Q^-_j
\end{array}
\right)
\le \frac{3}{\e_j} \left(
\begin{array}{cc}
I &  -I\\
-I &  I
\end{array}
\right) + o(\e_j^{n})
$$

Looking at the upper and lower diagonal terms we deduce that the eigenvalues
of $Q_j^+, Q_j^-$ are positive and  $O(\e_j^{-1})$. Evaluating the inequality on vectors of the form 
$(Z,Z)$ we deduce that the eigenvalues
of $Q_{j}^+ - Q_j^{-}$ are $ \le  o(\e_j^{n})$.

\smallskip

For a fixed $Q\in Sym_{\R}^2 (\C^n)$, denote by $H = Q^{1,1}$ its $(1,1)$-part. It is a hermitian matrix. 
 Obviously the eigenvalues of $H_j^+ := (Q_j^+)^{1,1}, H_j^- := (Q^-)^{1,1}$ are $O(\e_j^{-1})$ but those
of $H_j^+ - H_j^{-}$ are $\le o(\e_j^{n})$. 
Since $(\tau_j^+,p_j^+, Q_j^+)\in \overline{\mathcal P}^{2+} u(t_{\e_j},x_{\e_j})$ 
we deduce from the viscosity differential inequality satisfied by $u$ 
that $H_j^+$ is positive definite and that the product of its $n$
eigenvalues is $\ge c>0$ uniformly in $j$ (see \cite[Theorem 2.5]{EGZ14}). In particular its 
smallest eigenvalue is $\ge c\e_j^{n-1}$. The relation $ H_{j}^+ + o(\e_j^{n})\le H_j^{-}$
forces $H_j^{-} >0$ for $j > 1 $ large enough and
$ \text{det} \, H_j^+ \leq \text{det} \, H_j^-  + o(\e_j)$.

From the viscosity differential inequalities satisfied by $u$ and $v$, we deduce that
 \begin{eqnarray*} 
 && e^{\tau_j^-  +  \frac{\delta}{(T -t_{\e_j})^2} + \tilde F (t_{\e_j},x_{\e_j}, u (t_{\e_j},x_{\e_j}))} \tilde \mu (t_{\e_j},x_{\e_j}) \\
 &&\leq    \text{det} \, H_j^+ 
 \leq  \text{det} H_j^-  + o(\e_j) \\
 && \leq  e^{\tau_j^- + \tilde G (t_{\e_j},y_{\e_j}, v (t_{\e_j},y_{\e_j}))} \tilde \nu (t_{\e_j},y_{\e_j}) + o(\e_j).
\end{eqnarray*} 

Therefore for $j >$ large enough, we get
$$
 e^{\frac{\delta}{(T -t_{\e_j})^2} + \tilde F (t_{\e_j},x_{\e_j}, u (t_{\e_j},x_{\e_j}))} \tilde \mu (t_{\e_j},x_{\e_j}) 
  \leq  e^{ \tilde G (t_{\e_j},y_{\e_j}, v (t_{\e_j},y_{\e_j}))} \tilde \nu (t_{\e_j},y_{\e_j}) + e^{- C} o(\e_j).
$$ 
 
Then letting $j \to + \infty$, we infer the following (see \cite[Lemma 3.1]{EGZ14})
 \begin{equation} \label{eq:FI1}
 e^{\frac{\delta}{(T -\hat t)^2} +  \tilde F (\hat t,\hat x, \tilde u (\hat t,\hat x))}  \tilde \mu (\hat t,\hat x)  \leq e^{ \tilde G (\hat t,\hat x, v (\hat t,\hat x))} \tilde \nu (\hat t,\hat x).
\end{equation}

Back to $\f$ and $\p$ we then get  the required inequality (\ref{eq:FIneq0}).

If $\nu = \mu > 0$ and $G = F$, then we get
$$
 \frac{\delta}{(T -\hat t)^2} + F (\hat t,\hat x, \tilde \f (\hat t,\hat x))  <  F  (\hat t,\hat x, \p (\hat t,\hat x))
$$
 Since $ F$ is non decreasing in the last variable, it follows that
 $$
 \tilde \f (\hat t,\hat x)  \leq \p (\hat t,\hat x).
 $$
 
 Taking into acount the inequality (\ref{eq:Case1}), we conclude that
 $$
 \f (t,x) - \p (t,x) - \frac{\delta}{T - t} \leq \max_X (\f (0,x) - \p (0,x)_+ - \frac{\delta}{T}.
 $$
 Letting $\delta \to 0$ we obtain the theorem.
 \end{proof}
  
\smallskip

\begin{proof} [{Proof of corollary~\ref{Cor1}}]

We first establish a more general estimate. 

Let $\mu (t,x) \geq  0$ and $\nu (t,x) \geq 0$ be  two non negative continuous  volume forms  on $X_T$  and $F,G : \R^+  \times X \times \R \longrightarrow \R$ two continuous functions. 

Assume that $\f$  is a bounded subsolution to the parabolic complex 
Monge-Amp\`ere equation (\ref{eq:PCMA}) associated to $(\omega, F,\mu)$  and $\p$ is a bounded  supersolution to the parabolic complex Monge-Amp\`ere equation (\ref{eq:PCMA}) associated to $(\omega,G,\mu)$  in $X_T$.

Let $\theta$ as in (\ref{Condition}.b)  and $\rho < 0$ be a bounded $\theta$-psh  function in $X$ satisfying 
$(\theta + dd^c \rho)^n \geq  \lambda_0 > 0$ for a fixed positive volume form $\lambda_0$ on $X$ \cite{EGZ09}.

 Fix $\e \in ]0,1[$ and set 
$$
 \f^\e (t,x) := (1 - \e) \f(t,x)  + \e \rho -  A t,
$$
where $A = A (\e)> 0$ is a constant to be chosen later. Then 
$$
(\omega + dd^c \f^\e)^n \geq (1 - \e)^n (\omega + dd^c \f)^n + \e^n \lambda_0.
$$
Since $\f^\e \leq \f + M \e$, where $M$ is a bound for the $L^{\infty}$-norm of $\f$ and $\partial_t \f \geq - C$, it follows 
that $\partial_t \f^\e \leq \partial_t \f + C \e$, in the sense of viscosity and then
\begin{eqnarray*}
e^{\partial_t \f^\e +  F (t,x,\f^\e)} \mu (t,x) & \leq & e^{\partial_t \f - A +  \e C + F (t,x,\f) + M \kappa \e} \mu (t,x),\\
& \leq & (1 - \e)^n (\omega + dd^c \f)^n,
\end{eqnarray*}
if we choose $A :=  \e C + M \kappa \e - n \log (1 - \e)$. Here, we used (\ref{Condition}.c) to introduce $\kappa$  a uniform Lipschitz 
constant for $F$ with respect to the variable $r$.

Therefore 
$$
(\omega + dd^c \f^\e)^n \geq e^{\partial_t \f^\e +  
F (t,x,\f^\e)}\mu (t,x) + \e^n \lambda_0.
$$

Observe that since $\partial_t \f \leq C$, 
$$
\partial_t \f^\e  +  F (t,x,\f^\e) \leq C (1-\e) + B_0
$$
where $B_0>0$ exists thanks to (\ref{Condition}.d), 
and choosing $\eta \medskip:=\e^n e^{ - C (1-\e) - B_0}$ we obtain 
$$
(\omega + dd^c \f^\e)^n \geq e^{\partial_t \f^\e +  
F (t,x,\f^\e)}(\mu (t,x) + \eta \lambda_0).
$$

Thus $\f^\e$ is a subsolution to the  parabolic equation associated to $(\omega, F,\mu (t,x) + \eta \lambda_0)$.
Since the volume form  $\mu (t,x) + \eta \lambda_0$ is positive, we can apply   the inequality (\ref{eq:FIneq0}) to conclude that

\begin{equation} \label{eq:FIneq3}
e^{ \frac{\delta}{(T -\hat t_\e)^2} +  F (\hat t_\e,\hat x_\e,  \tilde\f^\e (\hat t_\e,\hat x_\e))} (\mu (\hat t_\e,\hat x_\e) + \eta \lambda_0)  \leq e^{ G (\hat t_\e,\hat x_\e, \p (\hat t_\e,\hat x_\e))} \nu (\hat t_\e,\hat x_\e),
\end{equation}
when there exists a point   $(\hat t_\e, \hat x_\e) \in ]0,T[ \times X$ where $\tilde \f ^\e- \p$ achieves its maximum on $X_T$.
In particular,  $\nu(\hat t_\e, \hat x_\e)>0$.

If moreover $\mu = \nu$, it follows from (\ref{eq:FIneq3}) that $\mu (\hat t,\hat x) > 0$ and then
\begin{equation} \label{eq:FIneq4}
 \frac{\delta}{(T -\hat t)^2} +  F (\hat t,\hat x,  \tilde \f^\e (\hat t,\hat x))   \leq  G (\hat t,\hat x, \p (\hat t,\hat x)).
\end{equation}

 If moreover $F = G$, we conclude as before that
$$
 \tilde \f^\e - \p \leq \max_X (\tilde \f^\e_0 - \p_0)_+ \leq \max_X ( \f_0 - \p_0)_+.
$$
Letting $\delta \to 0$ and then $\e \to 0$, we obtain the required inequality 
$\f - \p \leq  \max_X (\f_0 - \p_0)_+$. 
\end{proof}

 \smallskip

\begin{proof} [{Proof of corollary~\ref{Cor2}}]
Here we assume that the forms $\omega$ do not depend on the time variable. We will try and do the proof when $\mu$ depends on $t$ in order to 
stress the role of the hypothesis that $\mu$ is time independant. 

We are going to regularize in the time variable to reduce to the previous case.
  Let $(\f^k)$ be the upper Lipschitz regularization of $\f$ and $(\p_k)$ the lower Lipschitz regularization of $\p$ in the variable $t$  \cite[Lemma 2.5]{EGZ14}.
 Recall that
 \begin{eqnarray*}
 \f^k (t,x) & := & \sup \{\f (s,x) - k \vert s -t\vert , s \in [0,T[\},\\
 \p_k (t,x) & := & \inf \{\p (s,x) + k \vert s -t\vert , s \in [0,T[\},
 \end{eqnarray*}

Since $\omega$ do not depend on the time variable, if follows that for each $k > 1$, $\f^k$ is a subsolution to the parabolic equation associated to
$(\omega,F_k,\mu_k)$ and $\p_k$ is a supersolution to the parabolic equation associated to $(\omega,F^k,\mu^k)$ (see \cite{EGZ14}, Lemma 2.5). 
 Recall that
 \begin{eqnarray*}
 F^k (t,x,r) &:= &\sup \{ F (s,x,r) - k \vert s - t\vert ; s \in [0,T[, \  |s-t|\le \alpha/k\},\\
  F_k (t,x,r) &:= &\inf \{ F (s,x,r) + k \vert s - t\vert ; s \in [0,T[, \  |s-t|\le \alpha/k\},\\
  \mu_k(t,x)& :=& \inf \{ \mu(s,x); |s-t|\le \alpha/k\},\\
  \mu^k(t,x)&:=& \sup \{ \mu(s,x); |s-t|\le \alpha/k\}\cdot
\end{eqnarray*}
for some $\alpha>0$.

As in the proof of Corollary~\ref{Cor1} define, for  $0<\e<1$, $\f^{k,\e} (t,x) := (1 - \e) \f^k (t,x) + \e \rho (x)-A_k(\epsilon)t$
and $\tilde \f^{k,\epsilon}:=\f^{k,\e}-\frac{\delta}{T-t}$.
 Then we can apply  the inequality (\ref{eq:FIneq3}) in the proof of Corollary~\ref{Cor1}  to deduce that:

\begin{equation} 
 e^{\frac{\delta}{T^2} +  F_k (\hat t,\hat x, \tilde \f^{k,\e} (\hat t,\hat x))} (\mu_k+\eta \lambda_0)   \leq e^{ F^k (\hat t,\hat x, \p_k (\hat t,\hat x))}\mu^k,
\end{equation}
where $(\hat t, \hat x)= (\hat t_{\delta, k,\e}, \hat x_{\delta, k,\e}) \in ]0,T[ \times X$ is a point where $\tilde \f^{k,\e} - \p_k$ achieves its maximum on $X_T$.

By construction $ \hat t \le T_{\delta}<T$ where $T_{\delta}$ does not depend on $k,\epsilon$. 
Since $F_k,  F^k \to F$  locally uniformly and $\mu_k = \mu^k = \mu$
\footnote{Here we use the hypothesis that $\mu$ does not depend on $t$. Without this hypothesis an error term
$\log(\frac{\mu^k}{\mu_k+\eta_k(\e) \lambda_0})$ appears that may diverge to $+\infty$ when $\epsilon\to 0$, $k$ being kept fixed, 
say if $\mu( t,x)=0$ for $t\le \hat t$ but $\mu(t,x)>0$ for $t>\hat t$.},  for $k$ large enough we get
$$
 \frac{\delta}{2 T^2} + F (\hat t,\hat x, \tilde \f^{k,\e} (\hat t,\hat x)) \leq  F (\hat t,\hat x, \p_k (\hat t,\hat x)).
$$
 Since $F$ is non decreasing in the last variable, it follows that for $k > 1$ large enough and for all $0<\epsilon <1$,
 $$
 \tilde \f^{k,\e} (\hat t,\hat x) < \p_k (\hat t,\hat x).
 $$
 Therefore we get
 $$
 \max_{X_T}  (\tilde \f^{k,\e} - \p_k) \leq \max_{X}  (\tilde \f^{k,\e} (0,\cdot) - \p_k (0,\cdot))_+\leq \max_{X}  (\tilde \f^{k} (0,\cdot) - \p_k (0,\cdot))_+
 $$
 
 First let $\epsilon \to 0$. It follows that:
 
  $$
 \max_{X_T}  (\tilde \f^{k} - \p_k) \leq \max_{X}  (\tilde \f^{k} (0,\cdot) - \p_k (0,\cdot))_+
 $$
 
 Now we let $k \to + \infty$ and use Dini-Cartan's lemma to conclude that $
 \max_{X_T}  (\tilde \f - \p) \leq \max_{X}  (\tilde \f (0,\cdot) - \p (0,\cdot))_+
 $, which implies the required estimate as $\delta \to 0$.
  \end{proof}
 
 \smallskip

\begin{proof} [{Proof of corollary~\ref{Cor3}}]
 Now assume that $\omega$ depends on $t$. Then from the proof of  Lemma 2.5 in \cite{EGZ14}, we see that for any $(t_0,x_0) \in ]0,T[ \times X$ there exists $t_0^* \in ]t_0 - \alpha \slash k, t_0 + \alpha\slash k[$ such that 
 $\f^k$ satisfies the viscosity inequality
 $$
 (\omega (t_0^*,x_0) + dd^c \f^k (t_0,x_0))^n \geq e^{\partial_t \f^k (t_0,x_0) + F (t_0^*, x_0, \f^k (t_0,x_0))} \mu (x_0),
 $$
 where $\alpha > 0$ is a constant.
 
 Now assume  that $t \longmapsto \omega (t,\cdot)$ is non decreasing.
 Then for $k > 1$ large enough, $(\omega (t_0^*,x_0) \leq (\omega (t_0 + \alpha\slash k,x_0)$ and then the function $u^k (t,x) := \f^k (t - \alpha\slash k,x)$ is
 a a subsolution to the parabolic equation associated to $(\omega,\hat F_{k},\mu)$ in $]\alpha\slash k,T[ \times X$, where 
 $$
 \hat F_{k} (t,x,r) := F_{k} (t-\alpha\slash k,x,r).
 $$
 In the same way, we see that the function $v_k := \p_k (t + \alpha\slash k,x)$ is a supersolution to the parabolic equation associated $(\omega,\hat F^{k},\mu)$ in $]0,T[ \times X$, where 
 $$
 \hat  F^{k} (t,x,r) := F^k (t+ \alpha\slash k,x,r).
 $$
 
 Then one modiifes easily  the proof of Corollary~\ref{Cor2}, with $u^k$ replacing $\phi^k$ and $u_k$ replacing $\psi_k$.
  
  It is clear that the same argument works in the non increasing case.
 \end{proof}
 
 \smallskip
 
\begin{proof} [{Proof of corollary~\ref{Cor4}}]
By Lemma 2.5 in  \cite{EGZ14},  $\f^k$ is a subsolution of the equation associated to $((1+E(\alpha/k))\omega_t, F_k, \mu)$ whereas $\p_k$ is a supersolution of the 
equation associated to $((1-E(\alpha/k))\omega_t, F_k, \mu)$ with $\alpha>0$ as above. Hence $\f_{\star}^{k}=\frac{\f^k}{1+E(\alpha/k)}$ is a subsolution 
of the equation  to  $(\omega_t, F_k-\log(1+E(\alpha/k)), \mu)$ and $\p_{{\star}k}$ is a supersolution of the equation associated to  $(\omega_t, F^k+\log(1+E(\alpha/k)), \mu)$.
We can now argue exactly as in the proof of Corollary~\ref{Cor2}, with $\f_{\star}^k$ replacing $\f^k$ and $\psi_{{\star}k}$ replacing $\psi_k$.
 \end{proof}

\begin{rem} \label{twist-PMA}
Renormalization in the time variable leads to twisted parabolic complex Monge-Amp\` ere equation equations of the type
 \begin{equation} \label{eq:twist-PMA}
  e^{h (t) \partial \f_t + F (t,\cdot,\f)} \mu  - (\omega_t +dd^c \f_t)^n = 0
 \end{equation}
 in $[0,T[ \times X$, 
 where $h :[0, T[ \longrightarrow ]0,+ \infty[$ is a continuous positive function.
   
   The comparison principle Theorem~\ref{SCP} holds for the twisted parabolic complex Monge-Amp\` ere equation (\ref{eq:twist-PMA}) as in the local case (see \cite{EGZ14}).
  \end{rem}

\section{Barrier constructions}

Let $X$ be a compact K\"ahler  manifold of dimension 
$n$ and $\omega_0$ is  semipositive closed $(1,1)$ form with positive volume. 
We consider in this section the Cauchy problem  on $X_T$
\begin{equation} \label{eq:CPMA}
\left\{
\begin{array}{ll}
 e^{ \partial_t \f + \al \ \f} \mu - (\omega_t+dd^c \f_t)^n = 0 \\
  \f (0,x) = \f_0 (x), \, \, \, \, (0, x) \in  \{0\} \times X,
\end{array}
\right.
\end{equation} 
where $\f_0$ is a given continuous $\omega_0$-plurisubharmonic function on $X$ and $\alpha \in \R^+$.

The Cauchy problem does not necessarily admit a solution when $\mu$ vanishes identically on an open set
(see Proposition \ref{exa:vanishing}). We first treat the case when $\mu>0$ is positive, and then allow 
$\mu$ to vanish along pluripolar sets. This latter setting contains as a particular case the K\"ahler-Ricci flow on varieties with canonical singularities.

\smallskip

 We will mainly focus on the case $\al =0$. The case $\al > 0$ is actually easier and can be reduced to the previous one by a change of time variable. We also need to impose some uniformity in the positivity properties of
the forms we are dealing with:

{\it We  assume in the whole section that $X$ is a compact K\"ahler  manifold of dimension 
$n$  and 
 there exists a closed real $(1,1)$-form $\theta$ on $X$ whose cohomology class is 
 semi-positive and a K\"ahler form  $\Theta$ such that for all $0 \leq t \leq T$, the background continuous family of closed $(1,1)$-forms satisfies:
\begin{equation} \label{eq:H}
 \theta \leq \omega_t \leq \Theta.
\end{equation} 
}

\subsection{Existence of sub/super solutions}

\begin{lem} \label{Sub-Super} 
The  Cauchy problem (\ref{eq:CPMA})  admits a continuous subsolution $\underline u$, Lipschitz in the variable $t$. 

 Assume $\mu (t,x)\geq f_0 (x) d V$, where $f_0 \geq 0$ is a continuous density such that
 $$
 \int_X f_0 \, d V > 0.  \leqno (\dagger)
 $$
  Then, there exists a  continuous  supersolution $\overline v$, Lipschitz in the variable $t$.
 
 Moreover we can choose these so that $\underline u \leq \overline v$ in $ [0,T[ \times X$.
 \end{lem}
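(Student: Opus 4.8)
The plan is to construct the subsolution and the supersolution separately, each by a simple explicit ansatz that is affine (or polynomial) in time with a fixed spatial profile, and then to combine them.

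\textbf{Subsolution.} First I would look for $\underline u$ of the form $\underline u(t,x) = \rho(x) + a t + b$, where $\rho$ is a bounded $\theta$-psh function on $X$ provided by \cite{EGZ09} with $(\theta + dd^c\rho)^n \ge \lambda_0 > 0$ for some fixed positive volume form $\lambda_0$. Since $\omega_t \ge \theta$ we have $(\omega_t + dd^c\underline u_t)^n = (\omega_t + dd^c\rho)^n \ge (\theta+dd^c\rho)^n \ge \lambda_0$. Because $\mu$ is bounded and $\lambda_0 > 0$, we may choose $b$ very negative (so that $\rho + b$ is much smaller than $\f_0$ and so that $F$-contributions are controlled, using that $F$ is nondecreasing in $r$) and then $a$ very negative (to kill the $\partial_t$ term and absorb $\sup F(t,x,\cdot)$ on the relevant bounded range, using continuity of $F$ and of $\mu$, and boundedness of $\rho$) so that
$$
e^{\partial_t \underline u + F(t,x,\underline u)}\mu(t,x) \le e^{a + F(t,x,\rho+at+b)}\,\mu(t,x) \le \lambda_0 \le (\omega_t+dd^c\underline u_t)^n
$$
holds in the pluripotential sense on each slice. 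By Lemma \ref{lem:pluripotvisc} this makes $\underline u$ a viscosity subsolution, it is manifestly Lipschitz in $t$ and continuous, and by choosing $b$ sufficiently negative we get $\underline u(0,\cdot) = \rho + b \le \f_0$, so $\underline u$ is a subsolution of the Cauchy problem. (One should check $\rho$ can be taken bounded and that $\f_0$ is bounded, which it is, being continuous on compact $X$; strictly speaking one can also replace $\rho$ by $\max(\rho, \f_0 - M)$ type regularizations if needed, but the affine ansatz already works.)

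\textbf{Supersolution.} Here I would use the extra hypothesis $(\dagger)$. By \cite{EGZ09} again (the solution of the degenerate Monge--Amp\`ere equation with right-hand side a positive-mass density), since $\Theta$ is K\"ahler and $\int_X f_0\,dV > 0$, there is a bounded $\Theta$-psh function $w$ with $(\Theta + dd^c w)^n = c\, f_0\, dV$ for a suitable normalizing constant $c > 0$ with $\int_X c f_0\,dV = \int_X \Theta^n = \mathrm{vol}(\Theta)$. Then set $\overline v(t,x) = w(x) + At + B$. Since $\omega_t \le \Theta$, one has $(\omega_t + dd^c\overline v_t)^n = (\omega_t + dd^c w)^n \le (\Theta + dd^c w)^n = c f_0 dV \le (c/\inf_X?) \cdots$; more carefully, I want an upper bound of the form $(\omega_t + dd^c\overline v_t)^n \le e^{w'}\mu$ with $w' \le \partial_t\overline v + F(t,x,\overline v)$. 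Using $\mu \ge f_0\,dV$ we get $(\omega_t+dd^c\overline v_t)^n \le c f_0 dV \le c\,\mu$. So with $w' := \log c$ it suffices to have $A + F(t,x, w+At+B) \ge \log c$; choosing $B$ large and $A \ge 0$ large enough (again using monotonicity and continuity of $F$, boundedness of $w$, and that $F(t,x,0)$ is locally bounded — on $[0,T[$ with $T$ finite, hence on compacts) makes this hold on the relevant range. Then the second part of Lemma \ref{lem:pluripotvisc} (with this $w'$) shows $\overline v$ is a viscosity supersolution; it is continuous, Lipschitz in $t$, and by taking $B$ large, $\overline v(0,\cdot) = w + B \ge \f_0$, so it solves the Cauchy supersolution problem. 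Finally, enlarging $B$ in $\overline v$ and decreasing $b$ in $\underline u$ simultaneously (note $\rho, w$ are both bounded) gives $\underline u \le \overline v$ on all of $[0,T[\times X$.

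\textbf{Main obstacle.} The routine part is the bookkeeping of constants; the genuine content is getting clean one-sided Monge--Amp\`ere inequalities in the pluripotential sense on each time slice (so that Lemma \ref{lem:pluripotvisc} applies) — this is where one invokes \cite{EGZ09} for the existence of bounded (quasi-)psh functions with prescribed positive-mass densities both below ($\lambda_0$, giving the subsolution's lower bound) and above ($c f_0 dV$, giving the supersolution's upper bound via the comparison of Monge--Amp\`ere masses when $\omega_t \le \Theta$). The only delicate point is that for the supersolution one needs $\mu \ge f_0 dV$ with $f_0$ of positive mass precisely so that the target density is comparable to an actual solvable Monge--Amp\`ere equation; this is exactly the role of hypothesis $(\dagger)$ and is why the Cauchy problem can fail without it (cf. the reference to Proposition \ref{exa:vanishing}). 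I expect handling $T = +\infty$ (where $F(t,x,0)$ must be controlled uniformly, which is guaranteed here by the standing hypotheses) versus $T < \infty$ to require only a remark, not extra work.
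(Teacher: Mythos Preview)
Your proposal is correct and follows essentially the same approach as the paper: both construct $\underline u$ as a continuous $\theta$-psh function (from \cite{EGZ09}) plus a sufficiently negative linear term in $t$, and $\overline v$ as a continuous $\Theta$-psh solution of $(\Theta+dd^c\rho_2)^n = c_2 f_0\,dV$ (from \cite{Kol98,EGZ11}) plus a sufficiently positive linear term in $t$, invoking Lemma~\ref{lem:pluripotvisc} in each case. The only cosmetic difference is that you add separate constants $b,B$ whereas the paper normalizes the psh functions $\rho_1,\rho_2$ directly to lie below/above $\f_0$; also be sure to take $\rho$ and $w$ \emph{continuous} (not merely bounded), as this is needed to apply Lemma~\ref{lem:pluripotvisc}.
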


 \begin{proof} 
 By \cite{EGZ09}, there exists a continuous $\theta$-psh function $\rho_1$ in $X$ such that $(\theta + dd^ c \rho_1)^ n = c_1 d V$ in the weak sense on $X$, where $c_1$ is a normalizing constant. We can normalize $\rho_1$ so that $\rho_1 \leq \f_0$ in $X$.  Define for $C_1 >0$, the function  
$$
\underline u := - C_1 t + \rho_1 (x).
$$

 Then, by Lemma \ref{lem:pluripotvisc}, if $C_1 >> 1$ is choosen so large that $e^{- C_1} \sup_{X_T} \mu \leq c_1 d V$, the function $\underline u$  is a subsolution to the  Cauchy problem (\ref{eq:CPMA}). 

In the same way we construct a supersolution. 
Since $f_0 \geq 0$ is a bounded upper semi-continuous function on $X$ and $\int_X f_0 d V > 0$,  there exists  a continuous $\Theta$-psh $\rho_2$ satisfying
$$
(\Theta + dd^ c \rho_2)^ n = c_2 f_0 (x) \lambda_0
$$
 in the weak sense on $X$, where $c_2$ is a normalizing constant (by \cite{Kol98,EGZ11}). 
We normalize $\rho_2$ so that $\rho_2 \geq \f_0$ in $X$. 
Consider the function 
 $$
 \overline v := + C_2 t + \rho_2,
 $$
 where  $C_2 > - \log c_2$ is a positive constant. 

 Lemma \ref{lem:pluripotvisc} implies then  that $ \overline v$ is also a supersolution to the parabolic complex Monge-Amp\` ere equation (\ref{eq:CPMA}).
 Since $\overline v \geq \f_0$ in $X$ we obtain a continuous supersolution to the Cauchy problem (\ref{eq:CPMA}).
 \end{proof}

 \begin{coro}
  Assume either $\mu>0$ or the hypotheses of Corollaries \ref{Cor3} or \ref{Cor4} are satisfied in addition
  to those of lemma \ref{Sub-Super}$(\dagger)$. Then the Cauchy   problem (\ref{eq:CPMA})
  is admissible. 
 \end{coro}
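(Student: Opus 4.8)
The plan is to verify the two clauses in the definition of an \emph{admissible} Cauchy problem — existence of a bounded subsolution, and existence of a continuous $\psi$ with $\f_0\le\psi|_{X\times\{0\}}$ that dominates every subsolution — by combining the barriers of Lemma~\ref{Sub-Super} with the comparison principles of Section~2. Since hypothesis $(\dagger)$ is assumed, Lemma~\ref{Sub-Super} supplies a continuous subsolution $\underline u=-C_1t+\rho_1$ and a continuous supersolution $\overline v=C_2t+\rho_2$ of (\ref{eq:CPMA}), both Lipschitz in $t$, with $\rho_1\le\f_0\le\rho_2$. As $T$ is finite and $\rho_1,\rho_2$ are continuous on the compact manifold $X$, both $\underline u$ and $\overline v$ are bounded on $X_T$, so $\underline u$ already provides the required bounded subsolution; this settles the first clause.

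For the second clause I would take $\psi:=\overline v$, which is continuous and satisfies $\f_0\le\rho_2=\psi|_{X\times\{0\}}$; the substantive point is that every subsolution $\f$ of (\ref{eq:CPMA}) satisfies $\f\le\overline v$ on $X_T$. First I would reduce to bounded subsolutions: $\max(\f,\underline u)$ is again a subsolution of $(CMAF)_{X,\omega,\mu,\alpha r}$ — the maximum of two viscosity subsolutions of a degenerate parabolic equation is a subsolution, cf.\ \cite{CIL92} — and still satisfies $\max(\f,\underline u)(0,\cdot)\le\max(\f_0,\rho_1)=\f_0$, so I may assume $\f\ge\underline u$; then on every slab $X_{T'}$, $T'<T$, the function $\f$ is bounded, above because it is u.s.c.\ on the compact set $[0,T']\times X$ and below because $\f\ge\underline u$. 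On $X_{T'}$ I would then apply the comparison principle to $\f$ and $\overline v$: Theorem~\ref{SCP} when $\mu>0$ — here $\overline v$ is $C^1$ in $t$ with $\partial_t\overline v\equiv C_2>-C$, so the required local lower bound on the time derivative of the supersolution is met — and Corollary~\ref{Cor3} or Corollary~\ref{Cor4} under their respective hypotheses, after recording that condition~(\ref{Condition}) holds in the present setting: $X$ is K\"ahler, $\theta\le\omega_t$ with $\{\theta\}^n>0$ (as already used in Lemma~\ref{Sub-Super}), and $F(t,x,r)=\alpha r$ is non-decreasing, globally Lipschitz in $r$, and vanishes at $r=0$. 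Since $\f(0,\cdot)\le\f_0\le\rho_2=\overline v(0,\cdot)$, the comparison inequality gives $\f(t,x)-\overline v(t,x)\le\max_{x\in X}(\f(0,x)-\overline v(0,x))_+=0$ throughout $X_{T'}$; letting $T'\uparrow T$ yields $\f\le\overline v=\psi$ on $X_T$, and the second clause follows.

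I do not expect a genuine obstacle: the statement is essentially a bookkeeping assembly of Lemma~\ref{Sub-Super} and the Section~2 comparison principles, along the lines already indicated by the remark preceding Proposition~\ref{perron}. The two points that call for a little attention are the passage from bounded subsolutions to all subsolutions — handled above by taking the maximum with $\underline u$ and exhausting $X_T$ by the slabs $X_{T'}$ — and, in the degenerate cases $\mu\ge0$, making sure that the hypotheses we are given (condition~(\ref{Condition}) together with the time-independence of $\mu$ and the monotonicity, resp.\ regularity, of $t\mapsto\omega_t$) are precisely those needed for Corollary~\ref{Cor3}, resp.\ Corollary~\ref{Cor4}, so that the single barrier $\overline v$ genuinely dominates every subsolution.
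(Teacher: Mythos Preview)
Your proof is correct and follows the natural approach implicit in the paper, which states this corollary without proof: supply the bounded subsolution $\underline u$ and the dominating continuous function $\psi=\overline v$ from Lemma~\ref{Sub-Super}, then invoke the appropriate comparison principle to show $\f\le\overline v$ for every subsolution $\f$ of the Cauchy problem. Your reduction to bounded subsolutions via $\max(\f,\underline u)$ and the exhaustion by slabs $X_{T'}$ with $T'<T$ are exactly the routine steps needed to make the argument rigorous; the only cosmetic remark is that $T$ need not be finite in the setup of Section~3, so the boundedness of $\underline u$ and $\overline v$ should be asserted on each $X_{T'}$ rather than on $X_T$ --- but you already handle this in the comparison step, and the first clause of admissibility only asks for a bounded subsolution, which $\underline u$ restricted to any $X_{T'}$ provides.
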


Fix $\underline u, \overline v$ a  subsolution and a supersolution of the Cauchy problem  (\ref{eq:CPMA}). We are now in the position to apply Proposition \ref{perron}. The natural candidate to be a solution is the upper envelope of subsolutions  
 \begin{equation} \label{eq:MaxSub}
 \f := \sup \{ u \; | \; u \in \mathcal S, \underline u \leq \p \leq  \overline v\},
 \end{equation}
 where $\mathcal S$ denotes the family of all subsolutions to the Cauchy problem (\ref{eq:CPMA}). 
We let $\f^*$ denote the upper semi-continuous regularization of $\f$ and $\f_*$ denote its
lower semi-continuous regularization. It follows that:

 \begin{coro} \label{lem:Perron}

Assume the hypotheses of Corollaries \ref{Cor3} or \ref{Cor4} are satisfied in addition
  to those of lemma \ref{Sub-Super}$(\dagger)$. 
  
The upper semi-continuous regularization $\f^*$ is a discontinuous viscosity solution to the underlying 
parabolic Monge-Amp\`ere equation  in $]0, T[ \times X$.

The lower semi-continuous regularization $\f_*$ is thus a supersolution 
to the parabolic Monge-Amp\`ere equation  in $]0, T[ \times X$ and they satisfy
 then for all  $(t,x) \in ]0,T[ \times X$, 
 \begin{equation} \label{eq:Perron}
 \f^* (t,x)  - \f_* (t,x) \leq \max_{x \in X} (\f^* (0,x)  - \f_* (0,x)).
 \end{equation}
 \end{coro}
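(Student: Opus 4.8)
The plan is to deduce Corollary \ref{lem:Perron} directly from Proposition \ref{perron} together with one of the comparison corollaries (Corollary \ref{Cor3} or Corollary \ref{Cor4}). First I would observe that, under the hypotheses, the preceding Corollary shows the Cauchy problem (\ref{eq:CPMA}) is admissible: Lemma \ref{Sub-Super}$(\dagger)$ supplies a bounded continuous subsolution $\underline u$ and a bounded continuous supersolution $\overline v$ with $\underline u \le \overline v$, and the function $\psi := \overline v$ (which is continuous, with $\f_0 \le \overline v|_{X\times\{0\}}$) dominates every subsolution of the Cauchy problem by the comparison principle being invoked. Hence Proposition \ref{perron} applies to the family $\mathcal S$ of subsolutions to the Cauchy problem, and $\f^* = s^*$ is a discontinuous viscosity solution of $(CMAF)_{X,\omega,\mu,\alpha r}$ on $]0,T[\times X$; note that here one must be a little careful that the Perron envelope of subsolutions $u$ with the two-sided constraint $\underline u \le u \le \overline v$ coincides with the usc regularization of the sup of all subsolutions of the Cauchy problem, which holds because any subsolution can be truncated from below by $\max(u,\underline u)$ without leaving $\mathcal S$ and is automatically $\le \overline v$ by comparison.

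Next, by definition of a discontinuous viscosity solution (the definition given just after Definition \ref{cmafdef}), $\f^* = (\f^*)^*$ is a subsolution and $\f_* = (\f^*)_*$ is a supersolution of $(CMAF)_{X,\omega,\mu,\alpha r}$ on $]0,T[\times X$. In particular $\f_*$ is the asserted supersolution. It then remains to establish the inequality (\ref{eq:Perron}). For this I would apply the relevant comparison principle — Corollary \ref{Cor3} if $t\mapsto\omega_t$ is monotone, Corollary \ref{Cor4} if it is regular in the sense of Definition \ref{defi:oregular}, the hypotheses of which are exactly what is assumed here together with $\mu(t,x)=\mu(x)$ from Lemma \ref{Sub-Super} — to the pair $(\f^*,\f_*)$: both are bounded (being squeezed between $\underline u$ and $\overline v$), $\f^*$ is a subsolution, $\f_*$ a supersolution of (\ref{eq:PCMA}) with $F(t,x,r)=\alpha r$, $\alpha\ge 0$, which satisfies the structural conditions (\ref{Condition}.c),(\ref{Condition}.d). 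The comparison corollary then yields $\f^*(t,x)-\f_*(t,x)\le \max_{x\in X}(\f^*(0,x)-\f_*(0,x))_+$ for all $(t,x)\in[0,T[\times X$, which is (\ref{eq:Perron}) (the $(\cdot)_+$ being harmless since the right-hand side, the difference of an usc and an lsc regularization of functions bounded below by $\underline u$, is automatically $\ge 0$ anyway, or one simply restricts to $t>0$).

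The only genuinely delicate point — the rest being bookkeeping — is verifying that Proposition \ref{perron} is legitimately applicable, i.e.\ that admissibility in the precise sense of the definition holds. The subtlety is the clause ``every subsolution is $\le\psi$'': this is not part of Lemma \ref{Sub-Super} but requires an \emph{a priori} comparison against the continuous supersolution $\overline v$, which is precisely where Corollary \ref{Cor3} or \ref{Cor4} enters (rather than just Theorem \ref{SCP} or Corollary \ref{Cor1}, which would need pointwise bounds on $\partial_t$ that an arbitrary subsolution need not satisfy). I expect this to be the main obstacle to a fully rigorous write-up, and I would handle it by noting that $\overline v$ is, in fact, a classical supersolution (it is $C^{1,2}$ in $x$ and Lipschitz in $t$, from its construction via the Calabi--Yau type equation in Lemma \ref{Sub-Super}), so that the comparison principle applies to the pair (arbitrary bounded subsolution, $\overline v$) with no regularity needed on the subsolution side beyond what the comparison corollary already allows. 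With admissibility in hand, Proposition \ref{perron} gives the discontinuous solution and the comparison corollary gives the estimate, completing the proof.
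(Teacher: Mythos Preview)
Your proposal is correct and matches the paper's intended argument: the paper gives no explicit proof of this corollary, treating it as an immediate consequence of Proposition~\ref{perron} (applied via the preceding admissibility corollary) together with the comparison principle of Corollary~\ref{Cor3} or~\ref{Cor4} applied to the pair $(\f^*,\f_*)$, exactly as you outline.

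One small correction to your final paragraph: the supersolution $\overline v = C_2 t + \rho_2$ constructed in Lemma~\ref{Sub-Super} is \emph{not} of class $C^{1,2}$ in general, since $\rho_2$ is obtained from \cite{Kol98,EGZ11} and is only known to be continuous. Fortunately this does not matter: Corollaries~\ref{Cor3} and~\ref{Cor4} require no regularity of the supersolution beyond lower semicontinuity and boundedness, so the comparison of an arbitrary bounded subsolution against $\overline v$ goes through directly, and your attempted workaround via classical regularity of $\overline v$ is both unavailable and unnecessary.
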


If we could make sure that $\f^* \leq  \f_0 \leq \f_*$ on the parabolic boundary $\{0\} \times X$, it would follow
from the inequality (\ref{eq:Perron}) that $\f^* =\f_* =  \f$ is a unique viscosity solution of the Cauchy problem.
Establishing this classically requires the construction of barriers at each boundary point in $\{0\} \times X$.

\subsection{Existence of barriers}
 
   \begin{defi}  
Fix $(0,x_0) \in \{0\} \times X$ and  $\e \geq 0$.

 1.  An upper semi-continuous function $u : X_T \longrightarrow \R$ is an $\e$-subbarrier  to the Cauchy problem (\ref{eq:CPMA}) at the boundary point $(0,x_0)$, if 
\begin{itemize}
\item $u$ is a subsolution  to the  Monge-Amp\`ere flow (\ref{eq:CPMA}) in $]0,T[ \times X$,
\item $u (0,\cdot) \leq \f_0$ in $X$, 
\item $u_* (0,x_0) \geq \f_0 (x_0) - \e$. 
\end{itemize}
When $\e = 0$, $u$ is called a subbarrier.

 2.  A lower semi-continuous function $v : X_T \longrightarrow \R$ is an $\e$-superbarrier  to the Cauchy problem (\ref{eq:CPMA}) at the boundary point $(0,x_0)$, if 
\begin{itemize}
\item $v$ is a supersolution  to the  Monge-Amp\`ere flow (\ref{eq:CPMA}) in $]0,T[ \times X$,
\item $v (0,\cdot) \geq \f_0$ in $X$ 
\item $v^* (0,x_0) \leq \f_0 (x_0) + \e$. 
\end{itemize}
When $\e = 0$ $v$ is called a superbarrier.
 \end{defi}
 
We now investigate the existence of sub/super-barriers. 

\begin{prop} \label{prop:bar} 
\text{ }

1. Assume $\omega_0 \leq \omega_t$ and fix $\e > 0$.  
There exists a continuous function $U_\e$ in $X_T := [0,T[ \times X$, 
Lipschitz in $t$  which is an $\e$-subbarrier to the Cauchy problem (\ref{eq:CPMA}) at any point 
 $(0,x_0) \in \{0\} \times X$.

2. Assume $\mu (t,x) > 0$ in $X_T$ and fix $\e > 0$. There exists a  continuous function $V_\e$ in $X_T$, Lipschitz in $t$,   which is a $\e$-superbarrier 
to the Cauchy problem (\ref{eq:CPMA})  at any  point $(0,x_0) \in \{0\} \times X$.
\end{prop}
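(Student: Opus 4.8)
The plan is to construct explicit barriers by separating the Monge-Amp\`ere behaviour in the $x$-direction from the linear behaviour in $t$, using the solvability of the \emph{elliptic} degenerate Monge-Amp\`ere equation from \cite{EGZ09, EGZ11} together with Lemma \ref{lem:pluripotvisc} to check the viscosity differential inequalities. For part (1), given $\e>0$, I would first use the stability / approximation results for the complex Monge-Amp\`ere operator (as in \cite{EGZ09}) to find a continuous $\omega_0$-psh function $\rho_\e$ with $(\omega_0+dd^c\rho_\e)^n \ge c_\e\,dV$ for some $c_\e>0$ and $\f_0-\e \le \rho_\e \le \f_0$ on $X$; one gets this by solving $(\omega_0+dd^c\rho_\e)^n = c_\e\,dV$ after adding a small multiple of a fixed background potential to push $\rho_\e$ slightly below $\f_0$, or more directly by using the fact that $\f_0$ itself is continuous $\omega_0$-psh and approximating from below by such functions. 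Then I would set
$$
U_\e(t,x) := \rho_\e(x) - C_\e\, t
$$
with $C_\e>0$ chosen large enough that $e^{-C_\e}\sup_{X_T}\mu \le c_\e\,dV$. Since $\omega_0\le\omega_t$, we have $\omega_t+dd^cU_\e \ge \omega_0+dd^c\rho_\e \ge 0$ and $(\omega_t+dd^cU_\e)^n \ge (\omega_0+dd^c\rho_\e)^n \ge c_\e\,dV \ge e^{-C_\e}\mu \ge e^{\partial_t U_\e + \al U_\e}\mu$ once $C_\e$ also dominates $\al\sup|U_\e|$; Lemma \ref{lem:pluripotvisc} then gives that $U_\e$ is a subsolution. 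The boundary conditions $U_\e(0,\cdot)=\rho_\e\le\f_0$ and $U_{\e\,*}(0,x_0)=\rho_\e(x_0)\ge\f_0(x_0)-\e$ are immediate, so $U_\e$ is an $\e$-subbarrier at every point $(0,x_0)$ simultaneously.

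For part (2), the construction is dual but genuinely uses $\mu>0$: by \cite{Kol98, EGZ11} (Ko\l{}odziej's theorem, since $\mu$ is a positive continuous volume form and $\Theta$ is K\"ahler with $\omega_t\le\Theta$) solve $(\Theta+dd^c\sigma_\e)^n = c_\e'\,\mu$ with $\sigma_\e$ continuous $\Theta$-psh and normalized so that $\f_0 \le \sigma_\e \le \f_0+\e$; here continuity of $\sigma_\e$ and the freedom to translate it into a small band above $\f_0$ both rest on the continuity of $\f_0$ and on stability estimates. Then put
$$
V_\e(t,x) := \sigma_\e(x) + C_\e'\, t
$$
with $C_\e'>0$ large. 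To verify the supersolution inequality I would apply the second half of Lemma \ref{lem:pluripotvisc} with the comparison volume form: $(\omega_t+dd^cV_\e)^n \le (\Theta+dd^c\sigma_\e)^n = c_\e'\,\mu \le e^{w}\mu$ where $w := \log c_\e'$, and then choose $C_\e'$ so that $\partial_t V_\e + \al V_\e = C_\e' + \al V_\e \ge w$ on $X_T$, which is possible since $V_\e$ is bounded. One also needs $V_{\e,t}=\sigma_\e$ to be $\omega_t$-psh, which holds because $\omega_t\ge\theta$ has semipositive class and $\sigma_\e$ is $\Theta$-psh with $\Theta\ge\omega_t$ — wait, that inequality goes the wrong way, so here one instead notes $\sigma_\e$ is $\Theta$-psh does \emph{not} directly give $\omega_t$-psh; I would instead solve the Monge-Amp\`ere equation with $\omega_0$ (or with $\theta$) in place of $\Theta$ on the left, i.e. find $\sigma_\e$ continuous $\omega_0$-psh with $(\omega_0+dd^c\sigma_\e)^n = c_\e'\mu$ using that $\mu>0$ and $\{\omega_0\}^n>0$ (hypothesis (b)/(1.1)), and then the monotonicity $\omega_0\le\omega_t$ is exactly what is assumed in part (1); for part (2) with general $\omega_t\le\Theta$ one uses that $\sigma_\e$ being $\omega_0$-psh is not automatic either, so the cleanest route is to observe that \emph{any} bounded $\Theta$-psh $\sigma_\e$ has $(\omega_t+dd^c\sigma_\e)$ possibly not positive — hence I would actually require $\sigma_\e$ to be $\theta$-psh, solving $(\theta+dd^c\sigma_\e)^n = c_\e'\mu$ (legitimate since $\{\theta\}^n>0$ and $\mu$ is a positive measure of finite mass), which makes $\omega_t+dd^c\sigma_\e \ge \theta+dd^c\sigma_\e \ge 0$, and $(\omega_t+dd^c\sigma_\e)^n$ is \emph{not} controlled above by $(\theta+dd^c\sigma_\e)^n$ in general — so the estimate must instead be obtained from the bound $\omega_t\le\Theta$ by writing $(\omega_t+dd^c\sigma_\e)^n\le(\Theta+dd^c\sigma_\e')^n$ only if $\sigma_\e$ is simultaneously $\theta$-psh and $\Theta+dd^c\sigma_\e\ge\omega_t+dd^c\sigma_\e$, which does hold since $\Theta\ge\omega_t$; thus solving $(\Theta + dd^c\sigma_\e)^n=c_\e'\mu$ and separately checking $\sigma_\e$ is $\theta$-psh fails, and the correct fix is to take $\sigma_\e$ to be the solution with the \emph{largest available} positivity, namely $\Theta$, and accept that positivity of $\omega_t + dd^c\sigma_\e$ needs a separate argument: since in the Cauchy problem we only need $V_{\e,t}$ to be a supersolution, and supersolutions need not be $\omega_t$-psh, I would simply invoke the second bullet list of Lemma \ref{lem:pluripotvisc}, whose hypotheses ask that $v_t$ be $\omega_t$-psh — so ultimately I take $\sigma_\e$ with $(\omega_0+dd^c\sigma_\e)^n=c_\e'\mu$, which is solvable because $\mu>0$ and $\{\omega_0\}$ is big, and use $\omega_0\le\omega_t\le\Theta$ together with the mixed Monge-Amp\`ere inequalities to get $c_\e'\mu=(\omega_0+dd^c\sigma_\e)^n\le(\omega_t+dd^c\sigma_\e)^n\le(\Theta+dd^c\sigma_\e)^n$, the last of which is a bounded continuous function times a fixed measure, giving the required upper bound $e^w\mu$.

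The main obstacle is exactly the positivity bookkeeping in part (2): arranging a single continuous potential $\sigma_\e$ that is $\omega_t$-plurisubharmonic for \emph{all} $t$ simultaneously, lies in the thin band $[\f_0,\f_0+\e]$, and whose Monge-Amp\`ere mass is comparable to $\mu$ both from above and below; this is where the hypothesis $\mu>0$ (needed for solvability with the correct normalization and for the upper bound on the density) and the uniform squeeze $\theta\le\omega_t\le\Theta$ from (\ref{eq:H}) are both essential, and it is the reason part (2) fails when $\mu$ is allowed to vanish. The continuity of $\f_0$ enters to make the band-normalization $\f_0\le\sigma_\e\le\f_0+\e$ (and $\f_0-\e\le\rho_\e\le\f_0$ in part (1)) achievable via the stability estimates of \cite{EGZ11}; the Lipschitz-in-$t$ regularity of $U_\e,V_\e$ is automatic from the linear ansatz in $t$. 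The reduction from $\al>0$ to $\al=0$ via Lemma \ref{chgvar} handles the general exponent, so it suffices to absorb the $\al\f$ term into the constants $C_\e, C_\e'$ as indicated.
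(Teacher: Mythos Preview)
Your overall architecture (linear-in-$t$ ansatz, Lemma~\ref{lem:pluripotvisc} to verify the inequalities) matches the paper's, but both parts have a real gap at the same place: you cannot simultaneously prescribe that a Monge--Amp\`ere solution lie in the thin band $[\f_0-\e,\f_0]$ (resp.\ $[\f_0,\f_0+\e]$) and solve a given equation. Solving $(\omega_0+dd^c\rho_\e)^n=c_\e\,dV$ determines $\rho_\e$ up to an additive constant, and there is no reason it should be uniformly $\e$-close to $\f_0$; the ``stability estimates'' you invoke do not produce this. The paper bypasses this by a convex-combination trick: fix once and for all a continuous $\theta$-psh $w_0\le\f_0$ with $(\theta+dd^cw_0)^n\ge e^{w_0}\mu$ (from \cite{EGZ11}), and set
\[
u(t,x):=(1-\eta)\,\f_0(x)+\eta\,w_0(x)-Ct.
\]
Then $\omega_t+dd^cu_t\ge(1-\eta)(\omega_0+dd^c\f_0)+\eta(\theta+dd^cw_0)\ge\eta(\theta+dd^cw_0)$, so $(\omega_t+dd^cu_t)^n\ge\eta^n e^{w_0}\mu\ge e^{-C}\mu$ for $C$ large, while $u_0=\f_0+\eta(w_0-\f_0)\in[\f_0-\e,\f_0]$ once $\eta\sup(\f_0-w_0)\le\e$. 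The $\e$-closeness comes from the small weight $\eta$, not from stability.

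For part~(2) your discussion never converges, and the final ``cleanest route'' assumes $\omega_0\le\omega_t$, which is \emph{not} a hypothesis of part~(2). The missing idea is Demailly's regularization \cite{Dem92,BK07}: since $\f_0$ is $\Theta$-psh ($\omega_0\le\Theta$) and $\Theta$ is K\"ahler, there is a \emph{smooth} $\Theta$-psh $\tilde\f_0$ with $\f_0\le\tilde\f_0\le\f_0+\e$. Smoothness gives two things at once: first, $(\Theta+dd^c\tilde\f_0)^n$ is a smooth volume form, hence $\le e^C\mu$ for some $C$ precisely because $\mu>0$; second, the supersolution inequality $(\omega_t+dd^c\tilde\f_0)^n\le(\Theta+dd^c\tilde\f_0)^n\le e^C\mu=e^{\partial_t v}\mu$ for $v(t,x):=\tilde\f_0(x)+Ct$ can be checked \emph{pointwise} (classically), so you never need $v_t$ to be $\omega_t$-psh and the positivity bookkeeping that traps you simply does not arise. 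No Monge--Amp\`ere equation is solved for the superbarrier; regularization does all the work.
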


As the proof will show one can moreover impose that for all $(t,x) \in X_T$,
$$ 
- C_1 t + \rho_1 (x) \leq U_\e (t,x)  \leq V_\e (t,x) 
\leq C_2 t + \rho_2 (x),
$$
where $C_1, \rho_1, C_2, \rho_2$ are independent of $\e$ and given in Lemma~\ref{Sub-Super}.

\begin{proof} 
1.  By \cite{EGZ11}, since $\mu$ is continuous, there exists  $w_0$ a continuous $\theta$-psh function on $X$  such that 
$(\theta + dd^c w_0)^n \geq e^{w_0} \mu $.
 Adding a negative constant we can always assume that $w_0 \leq \f_0$ in $X$.

Fix $\e > 0$, $\eta=\eta_\e> 0$, $C=C_\e > 0$ (to be chosen below) and set
$$
u (t,x) := (1- \eta) \f_0 (x) + \eta w_0 (x) - C t, \, \, (t,x) \in X_T.
$$ 
This is a continuous function in $X_T$ such that for any $t \in [0,T[$. Since $\omega_0 \leq \omega_t$, $u_t$ is 
$\omega_t$-psh 
in the space variable $x \in X$ and satisfies the differential inequalities
$$
(\omega_t+dd^c u_t)^n \geq \eta^n (\theta+dd^c w_0)^n \geq \eta^ n e^{w_0} \mu  \text{ on } \, X,
$$
while  $\partial_t u = - C$ in $X_T$. We choose $C = C (\eta) > 1$ large enough so that 
$\eta^n e^{w_0} \geq e^{- C}$, hence for each $t \in ]0,+ T[ $ we have 
$$
(\omega_t + dd^c u_t)^n \geq e^{\partial_t u (t,\cdot)} \mu
$$
  
Note that $u _0 = \f_0 + \eta (w_0 - \f_0) \leq \f_0$ in $X$.
We can choose $\eta > 0$ so small that $\eta \sup_X (\f_0 - w_0) \leq \e$ and Lemma \ref{lem:pluripotvisc} enables to conclude that $u$ 
is an $\e-$subbarrier for the Cauchy problem (\ref{eq:CPMA}) at any point $(0,x_0)$. 

We can moreover use Lemma~\ref{Sub-Super} to find a bounded subsolution $- C_1 t + \rho_1$  to the Cauchy problem (\ref{eq:CPMA}) which is independent of $\e$. Set for $(t,x) \in X_T$,
$$
U_\e (t,x) := \sup\{ u (t,x) , -C_1 t + \rho_1\}.
$$
The function $U$ is also an $\e$-subbarrier to the Cauchy problem (\ref{eq:CPMA}) at any boundary point $(0,x_0) \in \{0\} \times X$.
\smallskip

2. Constructing superbarriers.
 Fix $\e > 0$. Since $\Theta$ is K\"ahler and $\f_0$ is in particular a $\Theta$-psh function in $X$ (recall that $\omega_0 \leq \Theta$), there exists a  $C^{\infty}$-smooth  $\Theta$-psh function $\tilde \f_0$ in $X$  such that $\f_0 \leq \tilde \f_0 \leq \f_0 + \e$ in $X$ (see \cite{Dem92}, \cite {BK07}). 
Thus there is a constant $C > 0$ such that 
$$
(\Theta+dd^c \tilde \f_0)^n \leq e^C \mu
$$ 
pointwise on $ X$, as  we are assuming $ \mu > 0$.

Set $v (t,x) := \tilde \f_0 (x) + C t$ in $X_T$ and observe that
$$
(\Theta+dd^c v_t)^n = (\Theta + dd^c \tilde \f_0)^n \leq e^C \mu \leq e^{\partial_t v} \mu.
$$ 
Since $\omega_t \leq \Theta$ we infer  that $v_t$ also satisfies,  in the viscosity sense:
$$
(\omega_t+dd^c v_t)^n \leq e^C \mu.
$$ 

Therefore  $v$ is a continuous $\e$-superbarrier to the Cauchy problem (\ref{eq:CPMA}) at any boundary point  in $\{0\} \times X$.

Using Lemma~\ref{Sub-Super} and the condition $\Theta\ge \omega_t$, we moreover obtain a supersolution $\rho_2 +  C_2 t$ to the Cauchy problem (\ref{eq:CPMA}) and set for $(t,x) \in X_T$,
 $$
 V_\e (t,x) := \inf \{v (t,x) , \rho_2 (x) + C_2 t \}\cdot 
 $$
This $V$ is an $\e$-superbarrier to the Cauchy problem (\ref{eq:CPMA}) at any boundary point $(0,x_0) \in \{0\} \times X$.
\end{proof}

\begin{rem} \label{barriers}
\text{ }

1. If the Cauchy data $\f_0$ is  a continuous $\theta$-psh function on $X$ satisfying 
$(\theta +dd^c \f_0)^n \geq e^{\f_0} \mu$, then we can take $w_0 = \f_0$ in the above construction of  subbarriers. The corresponding function $U$ is then a bounded  continuous subsolution, 
which is uniformly Lipschitz in $t$ and satisfies $U (0,\cdot) = \f_0$, i.e. $U$ is a subbarrier to the Cauchy problem (\ref{eq:CPMA}).

 2. If the Cauchy data $\f_0$ is a continuous $\Theta$-psh function on $X$  such that 
$(\Theta+dd^c \f_0)^n$ has an $L^{\infty}$-density, then we can take $\tilde \f_0 = \f_0$ and $\e = 0$ in the above construction of superbarriers. We thus obtain a bounded continuous supersolution $V$ which is uniformly Lipschitz in $t$ and such that  $V (0,\cdot) = \f_0$ in $X$, i.e. $V$ is a superbarrier to the Cauchy problem (\ref{eq:CPMA}).
\end{rem}

\subsection{Non negative densities}

We explain in this section a non existence result:  when $\mu$ vanishes on an open set, 
there is no solution unless the initial data has special properties.

\begin{prop} \label{exa:vanishing}
Assume that $\mu = f d V,$ where $f \geq 0$  vanishes identically on 
$D \times [0,\delta]$, where
 $D \subset X$ is open. 
 
 If the initial data $\f_0$ is not a maximal $\omega$-psh function in $D$, then the Cauchy problem (\ref{eq:CPMA}) has no viscosity solution.
  \end{prop}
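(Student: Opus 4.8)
The plan is to argue by contradiction: assume the Cauchy problem (\ref{eq:CPMA}) admits a viscosity solution $\phi$ and show that $\f_0$ must then be maximal $\omega_0$-psh on $D$. First I would record what a viscosity solution gives us: $\phi$ is continuous on $X_T$, satisfies $\phi(0,\cdot)=\f_0$, and, being in particular a subsolution, each slice $\phi_t:=\phi(t,\cdot)$ lies in $\mathrm{PSH}(X,\omega_t)$ for $t\in\,]0,T[$ by the Proposition on slices of subsolutions quoted from \cite{EGZ14}. The key point is then to exploit the \emph{super}solution property together with the vanishing of $\mu$: on $D\times\,]0,\delta[$ the density $f$ is identically zero, so on each slice $\{t\}\times D$ the flow degenerates to the purely spatial equation $(\omega_t+dd^c\psi)^n=0$, with no time derivative. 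Invoking the fibrewise correspondence between viscosity and pluripotential (super)solutions of the complex Monge--Amp\`ere operator (\cite[Theorem 1.9]{EGZ11}, together with its parabolic counterpart in \cite{EGZ14}), I would conclude that for every $t\in\,]0,\delta[$ one has $(\omega_t+dd^c\phi_t)^n\le 0$ on $D$ in the Bedford--Taylor sense; since this is a nonnegative measure, $(\omega_t+dd^c\phi_t)^n\equiv 0$ on $D$, i.e.\ $\phi_t$ is maximal $\omega_t$-psh on $D$. (Note it is genuinely the supersolution side that is used here: a subsolution with the correct initial data always exists by the subbarrier construction of the previous section, so the obstruction can only come from above.)

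The second half is a limiting argument as $t\to 0^+$. By continuity of $\phi$ on $X_T$ one has $\phi_t\to\f_0$ uniformly on $X$, and by continuity of the family $(\omega_t)$ (whose local potentials are of class $C^{1,2}$, hence continuous in $(t,x)$) its local potentials converge locally uniformly to those of $\omega_0$. Fixing $D'\Subset D$ and writing $\omega_t=dd^c h_t$ on a neighbourhood of $\overline{D'}$ with $h_t\to h_0$ uniformly there, the bounded psh functions $h_t+\phi_t$ converge uniformly to $h_0+\f_0$, so the continuity of the Monge--Amp\`ere operator under uniform convergence yields $(\omega_t+dd^c\phi_t)^n\to(\omega_0+dd^c\f_0)^n$ weakly on $D'$. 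As every measure on the left vanishes on $D$, the limit vanishes on $D'$, hence on $D$ since $D'$ was arbitrary. Thus $(\omega_0+dd^c\f_0)^n\equiv 0$ on $D$, which is precisely the statement that the continuous $\omega_0$-psh function $\f_0$ is maximal on $D$ (a classical fact of local pluripotential theory), contradicting the hypothesis.

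The step I expect to be the main obstacle is the fibrewise pluripotential interpretation of the parabolic viscosity supersolution on $D\times\,]0,\delta[$: a priori a viscosity solution need not be $C^1$ in $t$, so one cannot simply freeze time in the test functions and some care is needed to localize space-time test functions. This is exactly where the vanishing of $\mu$ helps, since it removes the $\partial_t$ term from the equation on that region and reduces matters to the elliptic correspondence of \cite{EGZ11}; the cleanest route is to quote the relevant lemma of the companion paper \cite{EGZ14} rather than reprove it. Everything else — uniform convergence of the slices, weak continuity of the Monge--Amp\`ere operator, and the equivalence between maximality and vanishing Monge--Amp\`ere mass — is routine.
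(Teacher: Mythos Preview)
Your proposal follows the same contrapositive strategy as the paper: show that each slice $\phi_t$, for small $t>0$, is maximal $\omega_t$-psh on $D$, then let $t\to 0^+$ and use continuity of the Monge--Amp\`ere operator to conclude that $\f_0$ is maximal on $D$. The limiting argument you sketch is essentially the paper's, only spelled out in more detail.

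The only substantive difference is in how the key step---passing from the parabolic supersolution property with $\mu\equiv 0$ to maximality of the slices---is handled. You correctly flag this as the obstacle (one cannot simply freeze $t$ in the test functions when $\phi$ is merely continuous) and propose to resolve it by citing a lemma from \cite{EGZ14}. The paper does not quote such a lemma directly; instead it runs an inf-convolution argument: approximate $\phi$ from below by semi-concave functions $\phi_j$, which are twice differentiable a.e.\ and satisfy the same equation, so that $(\omega+dd^c\phi_j(t,\cdot))^n=0$ pointwise a.e.\ in $(t,x)$; for a.e.\ fixed $t$ this upgrades to a viscosity statement on $D$ (here \cite{EGZ14} is invoked), and monotone convergence of the Monge--Amp\`ere operator along $\phi_j\nearrow\phi$ gives maximality of $\phi_t$ for a.e.\ $t$. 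Your hope for a clean off-the-shelf citation is a bit optimistic: the inf-convolution regularization is precisely the ``care'' you allude to, and it yields the conclusion only for almost every $t$ (which is all that is needed for the limit). Otherwise your outline is correct.
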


  Recall that a continuous $\omega$-psh function $u$ is {\it maximal}  in $D$ if it satisfies the 
homogeneous complex Monge-Amp\`ere equation $(\omega + dd^ c u)^ n = 0$  there.

\begin{proof}  
Assume that the Cauchy problem (\ref{eq:CPMA})  with initial data $\f_0$ has a solution 
$\f$ in $[0,\delta] \times X$. Since $\mu = 0$ in $[0,\delta] \times D$,
 it follows that $\f$ is a solution to the degenerate parabolic equation $(\omega_t + dd^c \f_t)^n = 0$
in $D \times [0,\delta]$.
 
  We claim that for almost every $t > 0$, the function $\f_t$ is a continuous 
$\omega_t$-psh function on $X$, which is a viscosity solution of the elliptic  equation
$$
(\omega_t + dd^c \f_t)^n= 0.
$$

 This is clear if $ \f$ is  a classical solution. To treat the general case we use inf convolution to approximate $\f$ by an increasing sequence $(\f_j)$ of semi-concave functions which satify 
 the same equation on a slightly smaller domain  that we still denote by $[0,\delta] \times D$
for simplicity. The functions $\f_j$ admit a $(1,2)$-Taylor expansion almost everywhere, hence for a.e.  $(t,x)$, 
$$
(\omega + dd^c \f_j (t,x))^n= 0.
$$
Fixing one such $t$, it follows that for almost every $x$, 
$$
(\omega + dd^c \f_j (t,\cdot))^n= 0.
$$
It follows that the latter actually holds everywhere in $D$ in the viscosity sense (see \cite{EGZ14}).

 Since $\f_j$ increases to $\f $, it follows from the continuity of the complex Monge-Amp\`ere operator along montone sequences that for almost every $t$  the function  $\f_t$ satisfies 
 $(\omega + dd^c \f (t,\cdot))^n= 0$  in $D$.

 Note finally that $\f_t \to \f_0$ uniformly, hence $\f_0$ is maximal in $D$. 
 \end{proof}

\subsection{Canonical vanishing: existence of solutions}

We now restrict our attention to semi-positive measures 
$$
\mu(x,t)=e^{u (x)} f(x,t) dV(x),
$$
where $f>0$ is a positive continuous density and $u$ is  quasi-plurisubharmonic function that is
  {\it exponentially continuous} (i.e. such that $e^u$ is continuous).
The measure $\mu$ is thus allowed to vanish only along the closed pluripolar set
$(u=-\infty)$, in a time independent fashion.

\begin{lem} \label{SuperBar}
 For any $\e > 0$ there exists a lower semi-continuous function $ w : [0,T[ \times X \longrightarrow \R$, which is an $\e$-superbarrier  to the Cauchy problem (\ref{eq:CPMA}) at any boundary point $(0,x_0)$ with $u (x_0) > - \infty$.
\end{lem}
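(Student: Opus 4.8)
The plan is to mimic the superbarrier construction of Proposition~\ref{prop:bar}.2, but replacing the smooth $\Theta$-psh approximant $\tilde\f_0$ of $\f_0$ by a carefully chosen $\Theta$-psh function that is only \emph{continuous at $x_0$} and whose complex Monge--Amp\`ere mass is controlled by $\mu$ rather than by Lebesgue measure. Concretely, I would first regularize $\f_0$ from above: since $\omega_0\le\Theta$ with $\Theta$ K\"ahler, $\f_0$ is $\Theta$-psh, so by \cite{Dem92,BK07} there is a smooth $\Theta$-psh $g$ with $\f_0\le g\le \f_0+\e/2$ on $X$. The function $g$ is globally smooth, hence $(\Theta+dd^c g)^n\le B\,dV$ for some constant $B$; but near the pluripolar locus $(u=-\infty)$ the density $e^u f$ of $\mu$ is not bounded below, so $(\Theta+dd^c g)^n\le e^C\mu$ \emph{fails} there. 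This is exactly the obstruction, and I expect it to be the main difficulty.

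To get around it I would use the elliptic pluripotential theory: by \cite{EGZ09,EGZ11} (continuity of solutions to degenerate complex Monge--Amp\`ere equations with exponentially continuous densities — the very result quoted after Theorem~D), there is a continuous $\Theta$-psh function $h$ on $X$ solving $(\Theta+dd^c h)^n=c\,e^{u}f(\cdot,0)\,dV$ for a normalizing constant $c>0$, and more generally one solves $(\Theta+dd^c h)^n=c\,e^{h}\mu(\cdot,0)$ to get a continuous $\Theta$-psh $h$ with $(\Theta+dd^c h)^n\le e^{C}\mu$ pointwise for some constant $C$ (using that $\mu$ is, up to the time-independent factor $e^u$, a bounded continuous density times $dV$, and that $t\mapsto\mu(t,\cdot)$ is continuous so $\sup_{[0,T']}\mu\le e^{u}\,\tilde f\,dV$ with $\tilde f$ bounded continuous). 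Now I would form, for a small parameter $\eta=\eta_\e\in(0,1)$,
$$
w(t,x):=(1-\eta)\,g(x)+\eta\,h(x)+Ct,
$$
which is $\Theta$-psh (hence $\omega_t$-psh) in $x$ for each $t$. Its Monge--Amp\`ere mass satisfies, by concavity of $M\mapsto(\det M)^{1/n}$,
$$
(\Theta+dd^c w_t)^n\ \le\ \big[(1-\eta)(\Theta+dd^c g)^{1}+\eta(\Theta+dd^c h)^{1}\big]^{\otimes n}\ \le\ \ldots
$$
— more simply, since $(\Theta+dd^c g)^n\le B\,dV$ and $e^u\ge 0$, I instead choose $w$ to be a small \emph{perturbation} of $h$ alone plus a quasi-psh correction forcing $w\ge\f_0$; the cleanest route is $w(t,x)=h(x)+(\sup_X(\f_0-h))+Ct$, check $w(0,\cdot)\ge\f_0$, and check $(\Theta+dd^c h)^n\le e^{C}\mu$ gives $(\omega_t+dd^c w_t)^n\le e^C\mu\le e^{\partial_t w}\mu$, so that by Lemma~\ref{lem:pluripotvisc} $w$ is a supersolution. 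The point where $\e$ enters, and where continuity of $h$ at $x_0$ is crucial, is the boundary estimate: one needs $w^*(0,x_0)=w(0,x_0)\le\f_0(x_0)+\e$. Since $h$ is merely continuous (not smooth), $\sup_X(\f_0-h)$ need not be attained at $x_0$, so a global shift is too crude; instead I would localize, replacing the constant shift by $h+\chi_\eta$ where $\chi_\eta$ is chosen (using that $\f_0$ and $h$ are both continuous \emph{at} $x_0$, and $u(x_0)>-\infty$) so that $h+\chi_\eta\ge\f_0$ globally but $(h+\chi_\eta)(x_0)\le\f_0(x_0)+\e$ — e.g. take $\chi_\eta$ a standard quasi-psh bump that is $\ge \sup_X(\f_0-h)$ outside a small ball $B(x_0,\delta)$ and $\le \e$ at $x_0$, with $\delta$ small enough that the oscillation of $\f_0-h$ on $B(x_0,\delta)$ is $<\e/2$ by continuity at $x_0$. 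Finally, intersecting with the global supersolution $\rho_2+C_2 t$ of Lemma~\ref{Sub-Super}$(\dagger)$ (which applies since $f>0$ so $\int_X e^u f\,dV>0$ — here one needs $u\not\equiv-\infty$, true as $u$ is quasi-psh) via $w_\e:=\inf\{w,\ \rho_2+C_2 t\}$ keeps the supersolution property and the two boundary inequalities, yielding the desired $\e$-superbarrier at $(0,x_0)$.

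The step I expect to be the genuine obstacle is precisely \textbf{the existence of a continuous $\Theta$-psh $h$ with $(\Theta+dd^c h)^n\le e^C\mu$}: this is \emph{not} a consequence of the Bedford--Taylor $L^\infty$ theory because $\mu$'s density $e^u$ is unbounded from below along $(u=-\infty)$, and it is exactly here that one must invoke the results of \cite{EGZ09,EGZ11} on degenerate complex Monge--Amp\`ere equations with (exponentially) continuous right-hand side, together with the continuity of $t\mapsto\mu(t,\cdot)$ to pass from the initial slice to a time-uniform bound on $[0,T']$. Everything else — the viscosity subsolution/supersolution bookkeeping, the time-Lipschitz property, the passage through Lemma~\ref{lem:pluripotvisc}, and the localized bump correction handling the $\e$-boundary condition at a point where $u>-\infty$ — is routine once $h$ is in hand.
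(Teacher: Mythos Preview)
Your diagnosis of the obstacle is exactly right: with $\mu=e^{u}f\,dV$ vanishing along $(u=-\infty)$, no candidate with \emph{constant} time-derivative can satisfy $(\omega_t+dd^c w_t)^n\le e^{\partial_t w}\mu$ unless its Monge--Amp\`ere mass already vanishes there. But your proposed cure does not close. The pure-$h$ route $w=h+\sup_X(\f_0-h)+Ct$ gives a supersolution, yet you have no control on $w(0,x_0)-\f_0(x_0)$; the global shift is, as you note, too crude. Your convex combination $(1-\eta)g+\eta h$ fails for a structural reason: at a generic point of $(u=-\infty)$ the smooth form $\Theta+dd^c g$ need not degenerate, so $(1-\eta)(\Theta+dd^c g)+\eta(\Theta+dd^c h)\ge (1-\eta)(\Theta+dd^c g)$ has \emph{strictly positive} Monge--Amp\`ere there, while $e^C\mu=0$; hence no choice of $C$ makes it a supersolution. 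The ``localized bump'' fix is not viable either: adding a bump $\chi_\eta$ that is large outside a small ball destroys the $\Theta$-plurisubharmonicity (or the Monge--Amp\`ere bound), and you give no mechanism to preserve both simultaneously.

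The paper's idea is to absorb the singularity of $\mu$ into the \emph{time derivative} rather than into the space potential. With $u\le 0$ assumed $\Theta$-psh, set
\[
v(t,x):=\tilde\f_0(x)\;-\;t\,u(x)\;+\;C\,t
\]
on $\{u>-\infty\}$, where $\tilde\f_0$ is the smooth Demailly approximant with $\f_0\le\tilde\f_0\le\f_0+\e$. Then $\partial_t v=C-u$ blows up precisely where $\mu$ vanishes, so that $e^{\partial_t v}\mu=e^{C}f\,dV$ is uniformly bounded \emph{below}. On the other side, $\Theta+dd^c v_t=\Theta+dd^c\tilde\f_0-t\,dd^c u$ is bounded above by a fixed smooth form because $dd^c u+\Theta\ge 0$ (rewrite it as $(2\Theta+dd^c\tilde\f_0)-t(\Theta+dd^c u)+(t-1)\Theta$); hence $(\Theta+dd^c v_t)^n\le e^{C'}dV$, and for $C$ large the supersolution inequality follows via Lemma~\ref{lemVSUP}. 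Since $v(0,\cdot)=\tilde\f_0$, the $\e$-closeness $v^*(0,x_0)\le\f_0(x_0)+\e$ and the lower bound $v(0,\cdot)\ge\f_0$ come for free. Finally one takes $w=\inf\{v,\overline v\}$ with the global supersolution $\overline v$ of Lemma~\ref{Sub-Super} to extend $w$ as a bounded l.s.c.\ function across $(u=-\infty)$. The missing idea in your plan is precisely this coupling $-t\,u$ between time and the quasi-psh weight; no auxiliary elliptic solve is needed.
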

\begin{proof}
We can assume without loss of generality that $u \leq 0$ is a $\Theta$-psh function on $X$. Fix $\e > 0$. 
From the approximation theorem of Demailly (see \cite{Dem92}, \cite{BK07}), it follows  
that there exists a smooth $\Theta$-psh function $\tilde \f_0$ in $X$ such that
$ \f_0 \leq \tilde \f_0 \leq \f_0 + \e$ in $X$.  Set
$$
v (t,x) := \tilde \f_0 - t u + C t, \, \, (t,x) \in [0,T[ \times \Omega,
$$
where $\Omega := \{ x \in X \, | \, u (x) > - \infty\}$ is open and $C > 0$ is a constant to be chosen later. 
Observe that $v$ is continuous in $[0,T[ \times \Omega$ and satisfies 
$$ 
\Theta + dd^c v_t =  2 \Theta + dd^c \tilde \f_0 - t (dd^c u + \Theta)  + (t- 1) \Theta, 
$$
in the sense of currents in $\Omega$.
Since $dd^c u + \Theta \geq 0$, for $0 < t \leq T$, we have 
$$
\Theta + dd^c v_t \leq  2 \Theta + dd^c \tilde \f_0
$$
 in the sense of currents in $ \Omega$.

We choose $C > 1$ so big that  $(2 \Theta + dd^c \tilde \f_0)^n \leq e^C d V$. 
This we can do since $2 \Theta + dd^c \tilde \f_0$ is a smooth positive form on $X$. 

Note that $e^{\partial_t v } = e^{C-u}$ thus it follows from Lemma~\ref{lemVSUP} that $v$ satisfies the viscosity parabolic differential inequality $(\Theta + dd^c v_t)^n \leq e^{\partial_t v } \mu$ in $[0,T[ \times \Omega$.
As $\omega_t \leq \Theta$, Lemma~\ref{lemVSUP}  also implies that $v$ is a supersolution to the parabolic Monge-Amp\`ere equation  (\ref{eq:CPMA}) in $[0,T[ \times \Omega$.

On the other hand we know that there exists a (continuous) supersolution $\overline v$ to the parabolic Monge-Amp\`ere equation (\ref{eq:CPMA}) in $\R^+ \times X$ such that $\overline v_0 \geq \f_0$ in $X$.
The function $w := \inf \{v, \overline v\}$  a bounded, lower semi-continuous in $[0,T[ \times X$ and continuous in $[0,T[ \times \Omega$.  It can thus be extend as a lower semi-continuous function on $[0,T[ \times X$ by setting  
$$
w (0,x_0) := \inf \{ \tilde \f_0 (x_0) , \overline v_0 (x_0)\}
$$
for  any point $(0,x_0) $ with $u (x_0) = - \infty$. 
We let the reader check that this extension, which we still denote by $w$, is a supersolution to the parabolic Monge-Amp\`ere equation (\ref{eq:CPMA}) in $]0,T[ \times X$ such that $\f_0 \leq w_0 \leq \f_0 + \e$ in $X$.

Fix a point $x_0 \in X$ such that $u (x_0) > - \infty$.  Then $w^*(0,x_0) = w (0,x_0) \leq \f_0 (x_0) + \e$,
hence $w$ is an $\e$-superbarrier at such a point.
\end{proof}
In the proof above, we have used the following technical result:

\begin{lem} \label{lemVSUP} 
Let $\mu \geq 0$ be a continuous volume form on some domain $D$.
Let $\p$ be a bounded lower semi-continuous function in $D \subset X$ and $\rho$ a $C^ 2$-smooth function in $D$ such that $dd^ c \p \leq dd^ c \rho$ in the sense of currents. 
Then $(dd^c \p)^n \leq (dd^ c \rho)_+^n$ in the viscosity sense in $D$. 

If  $\Theta_1$ and $\Theta_2$ are smooth closed real $(1,1)$-forms in $X$ such that
$\Theta_1 \leq \Theta_2$ 
and  $(\Theta_2 + dd^c \p)^n \leq \mu$ in the viscosity sense, 
then $(\Theta_1 + dd^ c \p)^n \leq \mu$   in the viscosity sense.
\end{lem}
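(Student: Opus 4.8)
The plan is to deduce both statements directly from the definition of a viscosity supersolution of a complex Monge-Amp\`ere equation $(dd^c u)^n = g$ (with $g\ge 0$ continuous, see \cite{CIL92,EGZ11}), using two elementary facts: \emph{(i)} a function on a domain that is upper semi-continuous and locally bounded and whose $dd^c$ is a positive current is plurisubharmonic; equivalently, if a $C^2$ function $Q$ touches a psh function from above at a point $x_0$, then $dd^c Q(x_0)\ge 0$ as a Hermitian matrix (this is part of the pluripotential/viscosity dictionary, cf. \cite[Theorem 1.9]{EGZ11}); and \emph{(ii)} monotonicity of the determinant on the cone of nonnegative Hermitian matrices: $0\le H_1\le H_2$ implies $0\le\det H_1\le\det H_2$. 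Recall that the condition defining a supersolution of $(dd^c u)^n=g$ only constrains $C^2$ functions $q$ touching from below at a point $x_0$ with $dd^c q(x_0)\ge 0$, and then requires $(dd^c q(x_0))^n\le g(x_0)$; accordingly $(dd^c\rho)^n_+$ is read as the continuous function equal to $(dd^c\rho)^n$ wherever $dd^c\rho\ge 0$ (and to the determinant of the positive part otherwise, a case we will see to be vacuous).

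For the first assertion I would fix $x_0\in D$ and a $C^2$ function $q$ with $q\le\psi$ near $x_0$, $q(x_0)=\psi(x_0)$ and $dd^c q(x_0)\ge 0$. Since $\psi$ is bounded and lower semi-continuous and $\rho\in C^2(D)$, the function $\rho-\psi$ is locally bounded and upper semi-continuous, and $dd^c(\rho-\psi)\ge 0$ in the sense of currents by hypothesis; hence $\rho-\psi$ is psh by \emph{(i)}. The $C^2$ function $\rho-q$ touches $\rho-\psi$ from above at $x_0$, so \emph{(i)} gives $dd^c(\rho-q)(x_0)\ge 0$, i.e. $0\le dd^c q(x_0)\le dd^c\rho(x_0)$. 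In particular $dd^c\rho(x_0)\ge 0$, and \emph{(ii)} yields $(dd^c q(x_0))^n\le\det(dd^c\rho(x_0))=(dd^c\rho(x_0))^n_+$. This is exactly the supersolution inequality, so $\psi$ is a viscosity supersolution of $(dd^c u)^n=(dd^c\rho)^n_+$ on $D$.

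For the second assertion the claim is local, so I would work on a coordinate ball on which $\Theta_i=dd^c g_i$ for smooth local potentials $g_i$, $i=1,2$. By definition, $(\Theta_i+dd^c\psi)^n\le\mu$ in the viscosity sense means that $\psi+g_i$ is a viscosity supersolution of $(dd^c u)^n=\mu$, so I must pass from this property for $\psi+g_2$ to the same for $\psi+g_1$. Given a $C^2$ function $q$ touching $\psi+g_1$ from below at $x_0$ with $dd^c q(x_0)\ge 0$, the $C^2$ function $q+(g_2-g_1)$ touches $\psi+g_2$ from below at $x_0$, and its complex Hessian there equals $dd^c q(x_0)+\Theta_2(x_0)-\Theta_1(x_0)$, which is $\ge dd^c q(x_0)\ge 0$ because $\Theta_2\ge\Theta_1$. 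Using \emph{(ii)} and the supersolution property of $\psi+g_2$ we then get $(dd^c q(x_0))^n\le(dd^c q(x_0)+\Theta_2(x_0)-\Theta_1(x_0))^n\le\mu(x_0)$, which is what is needed; hence $\psi+g_1$ is a supersolution, i.e. $(\Theta_1+dd^c\psi)^n\le\mu$ in the viscosity sense.

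I expect no genuine analytic obstacle here: the only mildly delicate points are the bookkeeping of which test functions actually enter the definition (only those with nonnegative complex Hessian at the contact point, which is exactly why the ``$+$'' in $(dd^c\rho)^n_+$ is harmless), and checking in each case that the auxiliary function to which fact \emph{(i)} is applied --- namely $\rho-\psi$, respectively $\psi+g_2$ viewed through the shifted test function --- is upper semi-continuous and locally bounded, so that it is genuinely plurisubharmonic. The substance of the argument is the classical equivalence \emph{(i)} between plurisubharmonicity and the viscosity characterization via $C^2$ upper test functions, together with the determinant monotonicity \emph{(ii)}.
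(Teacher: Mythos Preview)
Your proof is correct and follows essentially the same approach as the paper: for the first assertion you show $\rho-\psi$ is plurisubharmonic, deduce that any lower $C^2$ test function $q$ for $\psi$ yields an upper $C^2$ test function $\rho-q$ for $\rho-\psi$, hence $dd^c q(x_0)\le dd^c\rho(x_0)$, and conclude by determinant monotonicity; for the second assertion the paper merely says ``the same lines'', and your explicit argument via local potentials and shifted test functions is exactly what is intended. Your write-up is in fact more careful about the bookkeeping (upper semicontinuity and boundedness of $\rho-\psi$, the precise role of the condition $dd^c q(x_0)\ge 0$) than the paper's, but the mathematical content is identical.
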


Recall that  $(dd^ c \rho)_+$ is the $(1,1)$-form defined pointwise by   
$(dd^ c \rho)_+ (x_0) := dd^ c \rho (x_0)$ if $dd^ c \rho (x_0) \geq 0$ and $0$ otherwise.

\begin{proof}
If $q$ a $C^2$ lower test function for $\p$ at a point $x_0 \in D$, i.e. $q \leq_{x_0} \p$,
 then $\rho - \p \leq_{x_0} \rho - q$.
Since $dd^ c \p \leq dd^ c \rho$, it follows that $\rho - \p$ is plurisubharmonic in $D$. 
Hence  $dd^c (\rho - q) (x_0) \geq 0$, i.e.
$dd^c \rho (x_0) \geq dd^c q (x_0)$.
If $dd^c q (x_0) \geq 0$ it follows that $dd^c \rho (x_0) \geq 0$ and $(dd^c q (x_0))^n \leq (dd^c \rho (x_0))^n$.
This proves the first statement. 

The proof of the second statement goes along the same lines.
\end{proof}

 \begin{defi}\label{defi:oregular2} Say $t\mapsto \omega_t$ is very regular if it is regular in the sense of definition \ref{defi:oregular} and there exists $\eta >0$, a function of class $C^1$
$\epsilon:[0,T[\to [0,1-\eta]$ such that $\epsilon(0)=0$ and $\omega_t\ge (1-\epsilon(t))\omega_0$.
 \end{defi}
 
 As we will see in the next section, this condition is satisfied in many geometric situations and the following result will be important for our applications.

\begin{theo}  \label{thm:can}
Assume that $\mu = e^{u} f d V$ is as above and $t \mapsto \omega_t$ is non decreasing or is very regular in the sense of Definition \ref{defi:oregular2}.
Then the maximal subsolution $\f$ constructed in Proposition~\ref{perron} is a unique viscosity solution to the Cauchy problem (\ref{eq:CPMA}).
\end{theo}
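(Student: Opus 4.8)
The plan is to combine the parabolic comparison principles of Section~2 with the barriers of Section~3. The starting observation is formal: since $f>0$ is continuous, the parabolic equation in (\ref{eq:CPMA}) (with $F(t,x,r)=\al r$ and $\mu=e^{u}f\,dV$) is \emph{the same equation}, term by term, as the flow associated with the time-independent measure $\mu_0:=e^{u}\,dV$ and the nonlinearity $\tilde F(t,x,r):=\al r+\log f(t,x)$, which is continuous, non-decreasing and $\al$-Lipschitz in $r$. Hence the two flows have exactly the same viscosity sub- and supersolutions, and for the second one Corollary~\ref{Cor3} (when $t\mapsto\omega_t$ is non-decreasing) or Corollary~\ref{Cor4} (when $t\mapsto\omega_t$ is very regular, hence regular in the sense of Definition~\ref{defi:oregular}) applies: condition (\ref{Condition}) holds ($X$ is Kähler, $\theta\le\omega_t$ with $\{\theta\}^{n}>0$, $\tilde F$ is $\al$-Lipschitz in $r$, and $\tilde F(\cdot,\cdot,0)=\log f$ is bounded above since $\mu$ is bounded), and $\mu_0$ does not depend on $t$. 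It follows that for any bounded subsolution $\f$ and bounded supersolution $\p$ of (\ref{eq:CPMA}) one has the parabolic comparison inequality $\max_{X_T}(\f-\p)\le\max_{x\in X}(\f(0,x)-\p(0,x))_+$; in particular there is at most one viscosity solution of the Cauchy problem.

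Next, the condition $(\dagger)$ of Lemma~\ref{Sub-Super} is satisfied (on each $[0,T'[$, $T'<T$: $\int_X e^{u}\,dV>0$ and $f$ is there bounded below by a positive constant), so Lemma~\ref{Sub-Super} together with Corollary~\ref{lem:Perron} (whose hypotheses are met through the reformulation above) apply: the Perron envelope $\f$ of (\ref{eq:MaxSub}) has $\f^{*}$ a subsolution and $\f_{*}$ a supersolution on $]0,T[\times X$, and $\f^{*}(t,x)-\f_{*}(t,x)\le\max_{x\in X}(\f^{*}(0,x)-\f_{*}(0,x))$. Everything then reduces to the boundary estimate $\f^{*}(0,\cdot)\le\f_0\le\f_{*}(0,\cdot)$ on $X$. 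The lower inequality follows from subbarriers: part~1 of Proposition~\ref{prop:bar} produces, for each $\e>0$, a continuous $\e$-subbarrier $U_\e$ with $\underline u\le U_\e\le\overline v$ when $\omega_0\le\omega_t$; in the very regular case one runs the same construction with the static potential $\rho$ replaced by $(1-\e(t))\rho$, using $\omega_t\ge(1-\e(t))\omega_0$ to keep $U_\e(t,\cdot)$ $\omega_t$-plurisubharmonic and the $C^{1}$ regularity of $t\mapsto\e(t)$ to keep $\partial_t U_\e$ bounded. As $U_\e$ belongs to the admissible family, $U_\e\le\f$, hence $\f_{*}(0,\cdot)\ge U_\e(0,\cdot)\ge\f_0-\e$, and $\e\to0$ gives $\f_{*}(0,\cdot)\ge\f_0$.

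For the upper inequality, fix $x_0$ with $u(x_0)>-\infty$: Lemma~\ref{SuperBar} gives a bounded $\e$-superbarrier $w$ at $(0,x_0)$ with $w(0,\cdot)\ge\f_0$, so every subsolution in the admissible family, having initial value $\le\f_0\le w(0,\cdot)$, is $\le w$ by the comparison principle; thus $\f\le w$ and $\f^{*}(0,x_0)\le w^{*}(0,x_0)\le\f_0(x_0)+\e$, whence (letting $\e\to0$) $\f^{*}(0,\cdot)\le\f_0$ on the open set $\Om:=\{u>-\infty\}$, whose complement is closed and pluripolar. Upgrading this to every point is the part I expect to be the main obstacle: on $\{u=-\infty\}$ no superbarrier is available, so one argues along a sequence $(t_k,x_k)\to(0,x_0)$ with $t_k>0$ realising $\f^{*}(0,x_0)$ that the slices $\f^{*}_{t_k}$ are $\omega_{t_k}$-plurisubharmonic and uniformly bounded above (using that viscosity subsolutions are fibrewise $\omega_t$-psh, \cite{EGZ14}), that by regularity $\omega_{t_k}$ is squeezed between $(1\pm E(t_k))\omega_0\to\omega_0$, so a subsequence yields $g:=(\limsup_k\f^{*}_{t_k})^{*}\in\mathrm{PSH}(X,\omega_0)$ with $g\le\f_0$ off a pluripolar set — hence $g\le\f_0$ everywhere, since $\omega_0$-plurisubharmonic functions are unchanged off pluripolar sets (standard pluripotential theory, see \cite{GZ05}) — and finally that $\f^{*}(0,x_0)=\lim_k\f^{*}(t_k,x_k)\le g(x_0)\le\f_0(x_0)$ by the Hartogs-type semicontinuity of such families (the case where infinitely many $t_k=0$ is immediate from $\f(0,\cdot)\le\f_0$ and continuity of $\f_0$). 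This is precisely where the regularity of $t\mapsto\omega_t$ enters in an essential way.

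Finally, inserting $\f^{*}(0,\cdot)\le\f_0\le\f_{*}(0,\cdot)$ into the inequality $\f^{*}-\f_{*}\le\max_X(\f^{*}(0,\cdot)-\f_{*}(0,\cdot))$ gives $\f^{*}\le\f_{*}$ on $X_T$; combined with the automatic $\f_{*}\le\f\le\f^{*}$ this forces $\f=\f^{*}=\f_{*}$, so $\f$ is continuous, is simultaneously a sub- and a supersolution, and satisfies $\f(0,\cdot)=\f_0$; thus $\f$ is a viscosity solution of the Cauchy problem, and it is the unique one by the comparison inequality of the first paragraph (applied in both directions to any other solution, which is at once a bounded sub- and supersolution with initial value $\f_0$). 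If $T=+\infty$ or $f$ degenerates as $t\to T$, one runs this argument on each $[0,T'[$ with $T'<T$ and glues the resulting solutions by uniqueness.
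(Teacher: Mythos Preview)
Your proposal is correct and follows essentially the same architecture as the paper's proof: comparison principle $\Rightarrow$ Perron envelope is a discontinuous solution $\Rightarrow$ barriers force $\f^*(0,\cdot)\le\f_0\le\f_*(0,\cdot)$ $\Rightarrow$ $\f^*=\f_*=\f$. Two points of comparison are worth noting.

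First, your opening move---absorbing $\log f(t,x)$ into $F$ so that the residual measure $\mu_0=e^{u}\,dV$ is time-independent---is a genuine clarification. Corollaries~\ref{Cor3} and~\ref{Cor4} require $\mu(t,x)=\mu(x)$, while the setup of Section~3.4 allows $f=f(x,t)$; the paper applies these corollaries without making this reduction explicit, and your reformulation fills that gap cleanly.

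Second, for the extension of $\f^*(0,\cdot)\le\f_0$ from $\{u>-\infty\}$ to all of $X$, your route is more elaborate than necessary. The paper simply observes that $\f^*(0,\cdot)$ is itself $\omega_0$-plurisubharmonic (because each slice $\f^*(t,\cdot)$, $t>0$, is $\omega_t$-psh and $\f^*$ is usc on $[0,T[\times X$); since an $\omega_0$-psh function that is $\le\f_0$ off a Lebesgue-null set is $\le\f_0$ everywhere (by upper semicontinuity against the continuous $\f_0$), the conclusion is immediate. Your Hartogs/compactness argument along a realising sequence $(t_k,x_k)$ reaches the same endpoint and is correct (once one passes to an $L^1_{\mathrm{loc}}$-convergent subsequence and uses Hartogs' lemma against the continuous majorant $\f_0+\e$), but it reproves in effect the plurisubharmonicity of the boundary trace; the paper's shortcut is more economical. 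Your treatment of the subbarrier in the very regular case matches the paper's: the formula there is $u(t,x)=(1-\eta-\epsilon(t))\f_0(x)+\eta w_0(x)-Ct$, which is exactly the modification you sketch.
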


\begin{proof}   
We first assume $t \mapsto \omega_t$ is non decreasing. 

By Proposition~ \ref{prop:bar},  given $\e > 0$ there exists a continuous $\e$-subbarrier $U$ at any point $(0,x_0) \in \{0\} \times X$ i.e. 
$U \leq \f$ and $U (0, x_0) \geq \f_0 (x_0) - \e$.  Since $U$ is continuous, it follows that $U \leq \f_*$ in $\R^+ \times X$, hence $\f_* (0,x_0) \geq \f_0 (x_0)$ for any $x_0 \in X$. 
This shows that $\f_*$ is a supersolution to the Cauchy problem (\ref{eq:CPMA}).

We claim that $\f^* (0,\cdot) \leq \f_0$ in $X$. 
Indeed if we fix $\e > 0$,  by Lemma~\ref{SuperBar} there exists an  $\e$-superbarrier $w$ to the Cauchy problem (\ref{eq:CPMA}) in $[0,T[ \times X$ at any point $(0,x_0)$ with
$u (x_0) > - \infty$. 

Since $w$ is a supersolution to the Cauchy problem (\ref{eq:CPMA}) in $[0,T[ \times X$, it follows from the comparison principle and the continuity of $w$ that  
$\f  \leq w$ in  $]0,T[ \times X$. Since $w$ is continuous up to the boundary, 
$$\f^ * (0,x_0) \leq w (0,x_0) \leq \f_0 (x_0) + \e
$$
 for any $x_0 \in X$ with $u (x_0) > - \infty$. 

Therefore  $\f^* (0,\cdot) \leq \f_0$ almost everywhere in $X$, since the set $\{u = - \infty\}$  
has Lebesgue measure $0$. 
Since the slice function $\f^* (t,\cdot)$ is $\omega_t$-plurisubharmonic  for all $t>0$ (\cite{EGZ14}, Theorem 2.5), and $f^*$ is upper semicontinuous on $[0,T[\times X$ it follows that 
$\f^*_0$ is $\omega_0$-plurisubharmonic. Hence $\f^*(0,x) \leq \f_0(x) $ for all $x \in X$.

We have proven that $\phi^*_0 \le \phi_0 \le \phi^*$.
It follows therefore from Lemma~\ref{lem:Perron}  that $\f^* = \f_* = \f = \p$ in $[0,T[ \times X$ is the unique solution to the Cauchy problem (\ref{eq:CPMA}). 

Definition \ref{defi:oregular2} is an ad hoc definition whose only virtue is to allow the construction of a subbarrier in Proposition \ref{prop:bar} be carried out by:

$$
u(t,x):= (1-\eta-\epsilon(t))\phi_0(x) +\eta w(x) - Ct.
$$

The superbarrier construction is completely insensitive to this difficulty and the theorem follows. 
\end{proof}

\subsection{Comparison with the vanishing viscosity method}

We consider in this section the following $\e$-perturbation of Cauchy problem (\ref{eq:CPMA}) on $X_T$ with canonical vanishing given by a quasi-plurisubharmonic  function $w$:
\begin{equation*} 
\left\{
\begin{array}{ll}
 e^{ \partial_t \f + \al \f} e^{w} f d V - (\e \Theta+\omega_t+dd^c \f_t)^n = 0 \\
  \f (0,x) = \f_0 (x), \, \, \, \, (0, x) \in  \{0\} \times X,
\end{array}
\right.
\end{equation*}  
where $\f_0$ is a given continuous $\omega_0$-plurisubharmonic function on $X$. 

Here $\e \ge 0$ is a non negative constant and $\Theta$ is a K\"ahler form. Then, if $t\mapsto \omega_t$ is very regular, $t\mapsto \e\Theta + \omega_t$ is very regular too. 
In particular, Theorem \ref{thm:can} applies and for every $\e \ge 0$ we have a viscosity solution $\phi(\e)$ of the above $\e$-perturbed complex Monge-Amp\`ere flow. 

\begin{prop}\label{lem:vanish}
 $\phi(\e)$ converges locally uniformly to $\phi(0)$ in $\R^+ \times X$ as $\e \to 0$. 
\end{prop}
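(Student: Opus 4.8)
The plan is to combine the monotone dependence of $\phi(\e)$ on $\e$ with a standard half-relaxed limit argument and the comparison principle already available for the unperturbed problem. First I would observe that adding the semipositive form $\e\Theta$ to the background class can only enlarge the set of subsolutions: if $0\le\e\le\e'$ and $\psi$ is a viscosity subsolution of the $\e$-perturbed equation, then at every point where a $C^{2}$ function touches $\psi$ from above one has $\e\Theta+\omega_t+dd^c\psi\ge0$, hence $\e'\Theta+\omega_t+dd^c\psi=(\e\Theta+\omega_t+dd^c\psi)+(\e'-\e)\Theta\ge0$, and by monotonicity of the determinant on semipositive Hermitian matrices $(\e'\Theta+\omega_t+dd^c\psi)^{n}\ge(\e\Theta+\omega_t+dd^c\psi)^{n}\ge e^{\partial_t\psi+\al\psi}\mu$; so $\psi$ is a subsolution of the $\e'$-perturbed equation. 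Applying this with $\psi=\phi(\e)$, whose Cauchy datum is $\phi_0$, and invoking the comparison principle used in the proof of Theorem~\ref{thm:can} (Corollary~\ref{Cor3} or~\ref{Cor4}, which applies because $\e'\Theta+\omega_t$ is again very regular), I get $\phi(\e)\le\phi(\e')$. In particular $\phi(0)$, being a subsolution of every $\e$-perturbed Cauchy problem, satisfies $\phi(0)\le\phi(\e)$, so $\{\phi(\e)\}_{0\le\e\le1}$ is trapped between $\phi(0)$ and $\phi(1)$ and is uniformly bounded on each slab $[0,S]\times X$.

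Next I would fix a sequence $\e_k\downarrow0$ and set $\phi:=\lim_k\phi(\e_k)=\inf_k\phi(\e_k)$. This function is bounded, upper semi-continuous (a decreasing limit of continuous functions), satisfies $\phi\ge\phi(0)$, and restricts to $\phi_0$ on $\{0\}\times X$. Because the sequence is monotone, $\phi$ coincides with the relaxed upper limit $\limsup{}^{*}\phi(\e_k)$; and since the perturbed Monge-Amp\`ere operators converge locally uniformly (in the Hessian variable) to the unperturbed one as $\e\to0$, the stability of viscosity subsolutions under relaxed limits (\cite{CIL92}; in the complex Monge-Amp\`ere setting \cite{EGZ11,EGZ14}) shows that $\phi$ is a viscosity subsolution of~(\ref{eq:CPMA}), hence a subsolution of the unperturbed Cauchy problem since $\phi|_{\{0\}\times X}=\phi_0$. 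Comparing $\phi$ with the solution $\phi(0)$ via the comparison principle of Theorem~\ref{thm:can} yields $\phi\le\phi(0)$, so $\phi=\phi(0)$. As this holds for every sequence $\e_k\downarrow0$ and the family is monotone, $\phi(\e)$ decreases pointwise to $\phi(0)$ as $\e\to0$; Dini's theorem applied on each compact slab $[0,S]\times X$ (all functions continuous, $X$ compact) then upgrades this to locally uniform convergence on $\R^{+}\times X$.

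The main obstacle is the stability step: one must be sure that the decreasing limit of the $\phi(\e_k)$ remains a viscosity subsolution of the genuinely degenerate limit equation. Since the complex Monge-Amp\`ere operator is only degenerate elliptic and subsolutions are constrained to lie in the semipositive cone, this is not purely formal, but it is exactly the type of statement established in \cite{EGZ11,EGZ14}, and here it is eased both by the monotonicity of the family—so the relaxed upper limit coincides with the pointwise limit—and by the fact that the $\e$-dependence of the operator is a harmless additive semipositive perturbation. A secondary, routine point is to check that the monotonicity and uniform bounds of the first step genuinely place the pair $(\phi,\phi(0))$ in the hypotheses of the comparison principle of Theorem~\ref{thm:can}; this follows from $\phi|_{\{0\}\times X}=\phi_0$ and the two-sided bound $\phi(0)\le\phi(\e)\le\phi(1)$.
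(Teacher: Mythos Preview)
Your argument is correct and follows essentially the same route as the paper: monotonicity in $\e$ (you phrase it via subsolutions, the paper via supersolutions---dual arguments yielding the same inequality $\phi(0)\le\phi(\e)\le\phi(\e')$), then stability of viscosity (sub)solutions under the monotone limit as $\e\to 0$ (the paper cites \cite[\S6]{CIL92} and \cite[Lemma~1.7]{EGZ14} for this), then the comparison principle to identify the limit with $\phi(0)$. Your explicit use of Dini's theorem on compact slabs is a convenient way to extract locally uniform convergence from the monotone pointwise limit; the paper leaves this implicit in the half-relaxed limit machinery of \cite{CIL92}.
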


\begin{proof}
Since $\phi (\e')$ is a supersolution of  $\e$-perturbed complex Monge-Amp\`ere flow
whenever $\e' \ge \e$, the comparison principle implies that 
$$
\phi (0) \leq \phi(\e)\le \phi(\e') \text{ if } 0 \leq \e \le \e'.
$$
 Using \cite[section 6]{CIL92} (see also \cite[Lemma 1.7]{EGZ14})
we conclude with  the comparison principle for $\e$-perturbed complex Monge-Amp\`ere flows.  
\end{proof}

\begin{rem}
 One could also perturb $\mu$ to a smooth positive volume  form. 
\end{rem}

\section{Applications}

In this section we show that our hypotheses are satisfied when studying the (Normalized) K\"ahler-Ricci flow on a variety with canonical singularities.
We prove the existence and study the behaviour of the normalized K\"ahler-Ricci flow (NKRF for short) on such varieties
starting from an arbitray closed positive current with continuous potential.

\subsection{The normalized K\"ahler-Ricci  flow on varieties with canonical singularities}

Let $Y$ be an irreducible compact K\"ahler normal complex analytic space with only canonical singularities.  Let $\chi_0$ be a K\"ahler form on $Y$. 
We  study the existence of the normalized K\"ahler-Ricci flow on
$Y$,
$$
\frac{\partial \omega_t}{\partial t}=-\rm{Ric}(\omega_t)-\omega_t,
$$
starting from an  initial data $\omega_0 = \chi_0 + dd^c \phi_0$ with $\phi_0$ being a continuous potential which is plurisubharmonic with respect to the given K\"ahler form $\chi_0$ on $Y$. 
At the cohomological level, this yields
a first order ODE showing that the cohomology class of $\omega_t$ evolves as
$$
\{ \omega_t \} =e^{-t} \{ \omega_0 \} +(1-e^{-t} ) K_Y.
$$

We thus  defined  by
$$
T_{max} :=\sup\{ t>0,  \ e^{-t}\{\omega_0\}+(1-e^{-t}) K_{Y} \in \mathcal{K}(Y) \}
$$
the maximal time of existence of the flow. 

Recall that given a K\"ahler class on $Y$ with a smooth positive representative $\chi_0$ and $\phi_0\in PSH(Y,\chi_0)$ 
a continuous function,  the Cauchy problem with initial data $S_0 :=  \chi_0+dd^c\phi_0$
for the normalized K\"ahler-Ricci flow  is defined after a
desingularization $\pi:X \to Y$ as the Cauchy problem with initial data $\f_0 := \pi^*\phi_0$ for the flow
$(CMAF)_{X,\omega_{NKRF}, \mu_{NKRF}, r}$ (see Definition \ref{def:nkrf}). 
 We prove the following general version of  Tian-Zhang's  existence theorem for the K\"ahler-Ricci flow:

\begin{theo}\label{thm:nkrf}
 The Cauchy problem with initial data $ S_0 := \chi_0+dd^c\phi_0$
for the normalized K\"ahler-Ricci flow on $Y$ has  a unique viscosity solution defined
on $ [0,T_{max}[ \times Y$.
\end{theo}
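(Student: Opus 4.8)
The plan is to reduce Theorem~\ref{thm:nkrf} to Theorem~\ref{thm:can} applied to the pulled-back flow $(CMAF)_{X,\omega_{NKRF},\mu_{NKRF},r}$ on a log-resolution $\pi:X\to Y$, where $X$ is a compact K\"ahler manifold. First I would fix such a resolution and recall from the preliminaries that $\mu_{NKRF}$ has local density $v_{NKRF}=\prod_E|f_E|^{2a_E}v$ with $v>0$ smooth; hence $\mu_{NKRF}=e^{u}fdV$ with $f>0$ continuous and $u=\sum_E 2a_E\log|f_E|$ a quasi-psh function that is exponentially continuous (indeed $e^u=\prod_E|f_E|^{2a_E}$ is continuous since $a_E\ge 0$). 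So the measure is exactly of the type allowed in Theorem~\ref{thm:can}, vanishing only along the pluripolar set $\bigcup_E E$ in a time-independent way. The Cauchy datum is $\f_0=\pi^*\phi_0$, which is continuous on $X$ and $\omega_0$-psh for $\omega_0=\pi^*\chi_0$ (with $\chi_0$ smooth K\"ahler on $Y$), so $\f_0$ is an admissible Cauchy datum.

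The next step is to verify the hypotheses on $t\mapsto\omega_t$. Here $\omega_t=\pi^*\chi_t$ with $\chi_t=e^{-t}\chi_0+(1-e^{-t})\chi$, $\chi$ a fixed smooth representative of $K_Y$. On $[0,T_{max}[$ the class $\{\chi_t\}$ stays in the K\"ahler cone $\mathcal K(Y)$, so for every $T<T_{max}$ there is a smooth K\"ahler form $\Theta_Y$ on $Y$ and a constant so that $c^{-1}\Theta_Y\le\chi_t\le c\,\Theta_Y$ uniformly for $t\in[0,T]$; pulling back by $\pi$ gives the semipositivity bounds $\theta\le\omega_t\le\Theta$ required in the running assumption of the barrier section. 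For the regularity of $t\mapsto\omega_t$: since $\chi_t$ is a smooth family of K\"ahler forms on $Y$ with $\{\chi_t\}$ in a compact subset of $\mathcal K(Y)$, the Lemma just after Corollary~\ref{Cor4} (pullback of a continuous family of smooth K\"ahler forms is regular in the sense of Definition~\ref{defi:oregular}) applies. What must be checked for Theorem~\ref{thm:can} is either monotonicity or \emph{very} regularity in the sense of Definition~\ref{defi:oregular2}: one needs $\eta>0$ and a $C^1$ function $\epsilon:[0,T[\to[0,1-\eta]$ with $\epsilon(0)=0$ and $\omega_t\ge(1-\epsilon(t))\omega_0$. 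From $\chi_t=e^{-t}\chi_0+(1-e^{-t})\chi$ one directly gets $\chi_t\ge e^{-t}\chi_0$, so setting $\epsilon(t):=1-e^{-t}$ works on any $[0,T]$ with $T<\log\frac{1}{\eta}$ for suitable small $\eta$; combined with the already-noted regularity this gives very regularity on a maximal subinterval, and a standard exhaustion $T\uparrow T_{max}$ together with uniqueness lets us patch the solutions. (Alternatively, when $\chi$ dominates $\chi_0$ the family is genuinely non-decreasing, which is the other branch of Theorem~\ref{thm:can}.)

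With these verifications in hand, Theorem~\ref{thm:can} yields, for each $T<T_{max}$, a unique viscosity solution $\f^{(T)}$ of $(CMAF)_{X,\omega_{NKRF},\mu_{NKRF},r}$ with $\f^{(T)}|_{t=0}=\f_0$ on $[0,T[\times X$; uniqueness forces compatibility, so these glue to a unique $\f$ on $[0,T_{max}[\times X$. Finally I would invoke Lemma~\ref{reso}: the viscosity solution $\f$ descends to $Y_{T_{max}}$, i.e.\ $\f=\pi^*\phi$ for a continuous $\phi$ on $[0,T_{max}[\times Y$, and the resulting element $\omega+dd^c\phi\in H^0(Y,\mathcal C^0_{Y_T}/\mathcal{PH}_{Y_T/[0,T[})$ is independent of the choice of resolution $\pi$ and of the auxiliary metric $h$; by Definition~\ref{def:nkrf} this is precisely the solution of the normalized K\"ahler-Ricci flow on $Y$ starting at $S_0$, and its uniqueness on $Y$ follows from the uniqueness upstairs. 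The main obstacle is the verification that $t\mapsto\omega_t$ is very regular near $t=0$ (Definition~\ref{defi:oregular2}) for all geometrically relevant initial classes, together with the bookkeeping needed to run the flow up to the true maximal time $T_{max}$ rather than an \emph{a priori} smaller time; everything else is an application of the machinery already set up.
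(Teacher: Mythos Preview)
Your overall strategy---reduce to Theorem~\ref{thm:can} on a log-resolution, verify the structure of $\mu_{NKRF}$, check very regularity, glue by uniqueness, and descend via Lemma~\ref{reso}---matches the paper's. The gap is in the verification of the positivity hypotheses on the background family.

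You work with the specific representatives $\chi_t=e^{-t}\chi_0+(1-e^{-t})\chi$, where $\chi$ is merely a smooth form in $K_Y$. From ``$\{\chi_t\}\in\mathcal K(Y)$'' you conclude ``$c^{-1}\Theta_Y\le\chi_t\le c\,\Theta_Y$'', but this is a non-sequitur: a K\"ahler \emph{class} need not be represented by a positive \emph{form}, and $\chi$ can be very negative if $K_Y$ is not nef. For the same reason, the inequality $\chi_t\ge e^{-t}\chi_0$ is false in general (it needs $\chi\ge 0$), so your verification of Definition~\ref{defi:oregular2} fails, and the Lemma after Corollary~\ref{Cor4} does not apply since $\chi_t$ is not a family of K\"ahler forms. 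Consequently neither condition~(\ref{Condition}b), nor hypothesis~(\ref{eq:H}), nor very regularity is established, and Theorem~\ref{thm:can} cannot be invoked.

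The paper repairs exactly this point: for each $T<T_{max}$ one chooses a smooth family of \emph{genuine} K\"ahler forms $(\chi_t)_{t\in[0,T]}$ with $\{\chi_t\}=\{\omega_t\}$ (possible because the cohomology path stays in the open cone $\mathcal K(Y)$). This amounts to replacing the original background by $\omega'=\omega_{NKRF}+dd^c\Psi$ for a suitable smooth $\Psi$, which, by the reformulation recorded after Definition~\ref{def:nkrf}, changes the flow to $(CMAF)_{X,\omega',\mu'}$ with $\mu'=e^{-\Psi-\partial_t\Psi}\mu_{NKRF}$, still of the admissible form $e^u f\,dV$. With this choice $\theta_t=\pi^*\chi_t$ is the pullback of a smooth family of K\"ahler forms, so the Lemma after Corollary~\ref{Cor4} gives condition~(\ref{Condition}) and regularity, and very regularity follows from uniform K\"ahlerity of $\chi_t$ on $[0,T]$. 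The rest of your argument (apply Theorem~\ref{thm:can}, glue over $T\uparrow T_{max}$ by uniqueness, push down) then goes through unchanged.
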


\begin{proof}
Fix $T < T_{max}$. Since for any $t \in [0,T]$, $e^{-t}\{\omega_0\}+(1-e^{-t}) K_{Y} \in  \mathcal{K}(Y)$, one can show that there exists a smooth family
of K\"ahler forms  $(\chi_t)_{0 \leq t\leq T}\in \mathcal{K}(Y)$ such that for any $t \in [0,T]$, $\{\chi_t\} = \{\omega_t\}$.
Observe that if $\mathcal{K}_Y$ is semi-ample then $T_{max} = + \infty$ and we can take $\chi_t := e^{-t} \chi_0 +(1-e^{-t})\chi$, where $\chi$ is a smooth semi-positive representative of the canonical class $ \mathcal{K}_Y$.

In any case  we can write $\omega_t=\chi_t + dd^c \phi_t$, where $\phi$ is a solution to the corresponding Monge-Amp\`ere flow at the level of potentials,
\begin{equation} \label{eq:PMAF}
(\chi_t  +dd^c \phi_t)^n=e^{\partial_t \phi+\phi_t} dV_Y,
\end{equation}
on $Y_T$ for some admissible volume form $dV_Y$ on $Y$,  or equivalently
$$
(\theta_t+dd^c \f_t)^n=e^{\partial_t \f +\f_t} \mu_{NKRF},
$$
on a log resolution $\pi:X \rightarrow Y$, where $\mu_{NKRF}$ is a volume form on $X$ with canonical vanishing i.e. locally $\mu_{NKRF}=\Pi_E |f_E|^{2a_E} dV_X$. 
Here we write  $\f := \pi^* \phi$ and $\theta_t := \pi^* \chi_t$.

Since $(\chi_t)_{0 \leq t\leq T}$ is a smooth family of K\"ahler forms on $Y$, it follows that the family of forms $[0,T[ \ni t \longmapsto \theta_t $ is very regular on $X$ in the sense of Definition \ref{defi:oregular2}. Therefore we can apply Theorem \ref{thm:can} to get a unique solution to the Monge-Amp\`ere flow on $X_T$ for any fixed $T < T_{max}$ starting at $\f_0$. By uniqueness all these solutions glue into a unique solution of the Monge-Amp\`ere flow on $[0,T_{max}[ \times X$ starting at $\f_0$. Pushing this solution down to $Y$ we obtain a solution to the NKRF starting at $S_0$.
\end{proof}

We have recovered by a zeroth order method one of the main results  in \cite{ST2}. 
Our viscosity solution can be identified with their weak solution thanks to 
Proposition \ref{lem:vanish}.

If $Y$ is minimal, i.e.: $K_Y$ is nef, the flow is defined up to existence time $T=+\infty$, and it is natural 
to enquire about its long-term behaviour. The sequel of this and the following section  will be mainly devoted to the 
study of this problem.

Turning briefly our attention to the case when $-K_Y$ is ample, 
it follows from Lemma \ref{chgvar} that a similar result holds when $Y$ is a $\Q$-Fano variety.
We refer the reader to \cite{BBEGZ11} for background on $\Q$-Fano varieties.
The Normalized K\"ahler-Ricci flow is here
$$
\frac{\partial \omega_t}{\partial t}=-\rm{Ric}(\omega_t)+\omega_t.
$$
 and the cohomology class is again constant (equal to $c_1(Y)$) if we start from an initial data
$S_0= \chi +dd^c \phi_0$, where $\chi$ is a K\"ahler form representing $c_1(Y)$.
The flow can be written, at the level of potentials,
$$
(\chi +dd^c \phi_t)^n=e^{\partial_t \phi-\phi_t} dV_Y
$$
for some admissible volume form $dV_Y$, or equivalently
$$
(\theta_0+dd^c \f_t)^n=e^{\partial_t \f-\f_t} \mu_{NKRF},
$$
on a log resolution $\pi:X \rightarrow Y$, where $\theta_0 := \pi^* (\chi)$ and $\mu_{NKRF}$ is a volume form with canonical vanishing i.e. locally $\mu_{NKRF} =\Pi_E |f_E|^{2a_E} dV_X$.

Theorem \ref{thm:nkrf} then
guarantees that this complex Monge-Amp\`ere flow can be started from an
arbitrary continuous $\theta_0$-psh potential $\f_0$ and exists for all times $t>0$.
The long term behavior is however much more difficult to understand on 
$\Q$-Fano varieties and is related to the (mildly) singular version of the Yau-Tian-Donaldson conjecture
(see \cite{BBEGZ11,CDS,Tian12}).

\subsection{Canonically polarized varieties}

We work in this section on a minimal model of general type,
i.e. $Y$ has canonical singularities and $K_Y$ is   big and nef (hence semi-ample by a classical result of Kawamata).
This contains in particular the case when $Y$ is a canonical model, i.e. a general type
projective algebraic variety with only canonical singularities 
such that $K_Y$ is ample  (see \cite{BCHM} for the existence of a unique canonical model in every birational class of complex projective manifolds of the general type).

\subsubsection{Starting from the canonical class} In this paragraph, we assume $K_Y$ is ample. 
If we start the normalized K\"ahler-Ricci flow from an initial data $S_0= \chi_0 +dd^c \phi_0$ whose cohomology class
$\{\chi_0\}=c_1(K_Y)$ is the canonical class, then $\{ \omega_t\} \equiv c_1(K_Y)$ is constantly
equal to the canonical class of $Y$. Thus $\omega_t=\chi_0 + dd^c \phi_t$ and the flow can be written,
at the level of potentials,
$$
(\chi_0  + dd^c \phi_t)^n=e^{\partial_t \phi+\phi_t} dV_Y
$$
on $\R^+ \times Y$ for some admissible volume form $dV_Y$.

Theorem \ref{thm:nkrf} gives a unique viscosity solution to this 
complex Monge-Amp\`ere flow with initial data 
$\phi_0  \in PSH(X,\chi_0) \cap {\mathcal C}^0(X)$.
This shows in particular that the K\"ahler-Ricci flow can be run on $Y$ from an 
initial data $S_0$  which is an arbitrary positive current in $c_1(K_Y)$
with continuous potentials.

\smallskip

It follows from \cite[Theorem 7.8]{EGZ09} that $Y$ admits a unique singular 
K\"ahler-Einstein current $S_{KE} \in c_1(K_Y)$, which is a smooth
bona fide K\"ahler-Einstein metric on the regular part $Y_{reg}$ of $Y$, and admits
globally continuous potentials at singular points $Y_{sing}$ \cite{EGZ11}.

\begin{theo} \label{eq:cvnkrf}
Given any initial data $S_0$  which is an arbitrary positive current with continuous potentials
in $c_1(K_Y)$, the normalized K\"ahler-Ricci flow 
$$
\frac{\partial \omega_t}{\partial t}=-\rm{Ric}(\omega_t)-\omega_t
$$
can be run from $S_0$ and converges, as $t \rightarrow +\infty$, towards $S_{KE}$.
\end{theo}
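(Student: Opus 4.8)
The plan is to reduce the convergence statement to the known static result: by \cite{EGZ09} there is a unique singular K\"ahler-Einstein current $S_{KE}=\chi_0+dd^c\phi_{KE}\in c_1(K_Y)$, whose potential $\phi_{KE}$ is continuous by \cite{EGZ11} and satisfies $(\chi_0+dd^c\phi_{KE})^n = e^{\phi_{KE}}\,dV_Y$ in the weak sense on $Y$ (equivalently, $\pi^*\phi_{KE}$ solves the stationary equation upstairs with measure $\mu_{NKRF}$). First I would pull everything back by a log-resolution $\pi:X\to Y$, so that the NKRF becomes the flow $(\chi_0+dd^c\varphi_t)^n = e^{\partial_t\varphi+\varphi_t}\mu_{NKRF}$ on $\R^+\times X$, started from $\varphi_0=\pi^*\phi_0$ continuous, with the viscosity solution $\varphi$ furnished by Theorem~\ref{thm:nkrf}. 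The goal becomes: $\varphi_t \to \pi^*\phi_{KE}$ uniformly on $X$ as $t\to+\infty$.

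\textbf{Barriers via translated stationary solution.} The key observation is that the function $\psi_{KE}(t,x):=\pi^*\phi_{KE}(x)$ is a (time-independent) viscosity solution of the flow, since $\partial_t\psi_{KE}=0$ and $(\chi_0+dd^c\psi_{KE})^n = e^{\psi_{KE}}\mu_{NKRF}=e^{\partial_t\psi_{KE}+\psi_{KE}}\mu_{NKRF}$. More generally, for any constant $c$ one checks that
$$
\psi_c(t,x):= \pi^*\phi_{KE}(x) + c\,e^{-t}
$$
is a supersolution when $c\ge 0$ and a subsolution when $c\le 0$: indeed $\partial_t\psi_c = -c e^{-t}$, so $e^{\partial_t\psi_c+\psi_c}\mu_{NKRF} = e^{-c e^{-t}+c e^{-t}} e^{\psi_{KE}}\mu_{NKRF}=(\chi_0+dd^c\psi_c)^n$, and the monotonicity/sign of the exponent gives the required inequality after using that $F(t,x,r)=r$ is non-decreasing. (This is exactly the standard ODE comparison for $\dot a + a = $ const applied fibrewise; the change-of-time Lemma~\ref{chgvar} is the conceptual reason it works.) Now choose $c_+\ge 0$ and $c_-\le 0$ so that $\pi^*\phi_{KE}+c_- \le \varphi_0 \le \pi^*\phi_{KE}+c_+$ on $X$, which is possible since all three functions are continuous (and bounded). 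By the parabolic comparison principle — here Corollary~\ref{Cor4}, applicable because $\theta_t=\pi^*\chi_0$ is pulled back from a smooth K\"ahler form hence regular in the sense of Definition~\ref{defi:oregular}, and $\mu_{NKRF}$ is time-independent — one gets
$$
\pi^*\phi_{KE} + c_-\,e^{-t} \;\le\; \varphi_t \;\le\; \pi^*\phi_{KE} + c_+\,e^{-t}
$$
for all $t\ge 0$. Letting $t\to+\infty$ yields $\sup_X|\varphi_t-\pi^*\phi_{KE}|\le \max(|c_+|,|c_-|)\,e^{-t}\to 0$, i.e. exponentially fast uniform convergence at the level of potentials. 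Pushing down to $Y$, the currents $\omega_t=\chi_0+dd^c\phi_t$ converge to $S_{KE}$ (weakly, and in fact uniformly on potentials), which is the assertion.

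\textbf{Main obstacle.} The only genuinely delicate point is the verification that the stationary K\"ahler-Einstein potential, when viewed upstairs, is an \emph{honest viscosity} sub/supersolution of the \emph{parabolic} flow — not merely a pluripotential solution of the elliptic equation. This is where Lemma~\ref{lem:pluripotvisc} enters: since $\pi^*\phi_{KE}$ is continuous, $\chi_0$-psh, admits $\partial_t(\pi^*\phi_{KE}+c e^{-t})=-c e^{-t}$ continuous, and satisfies the Monge-Amp\`ere inequality in the pluripotential sense on each slice with the matching exponent, Lemma~\ref{lem:pluripotvisc} upgrades it to a viscosity sub/supersolution. A secondary technical check is that the hypotheses of Corollary~\ref{Cor4} (condition~(\ref{Condition}) plus regularity of $t\mapsto\theta_t$) are met, which follows from the Lemma in Section~2 on pullbacks of smooth K\"ahler forms together with the continuity and canonical-vanishing structure of $\mu_{NKRF}$; and that the viscosity solution $\varphi$ of Theorem~\ref{thm:nkrf} genuinely satisfies $\varphi_0=\pi^*\phi_0$ on the parabolic boundary, which is part of that theorem's conclusion. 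Everything else is the elementary ODE comparison packaged as a comparison principle.
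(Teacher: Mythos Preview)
Your argument is correct and reaches the same exponential estimate $\|\varphi_t-\varphi_{KE}\|_{L^\infty}\le e^{-t}\|\varphi_0-\varphi_{KE}\|_{L^\infty}$ as the paper. The underlying idea is identical: exploit that the stationary K\"ahler--Einstein potential is a fixed point of the flow and use the comparison principle to control the distance to it.

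The packaging differs slightly. You write down the one-parameter family $\psi_c(t,x)=\varphi_{KE}(x)+ce^{-t}$ directly; your own computation shows these are in fact \emph{exact} solutions of the flow for every $c$ (not merely sub/supersolutions depending on the sign of $c$, so the remark about ``monotonicity/sign of the exponent'' is superfluous), and Lemma~\ref{lem:pluripotvisc} upgrades them to viscosity solutions. Then Corollary~\ref{Cor2} (constant $\omega_t$) or~\ref{Cor4} applies immediately. The paper instead performs the change of variable $u_t=e^t\varphi_t$, obtains a twisted flow $(e^t\theta_0+dd^cu_t)^n=e^{e^{-t}\partial_t u_t+nt}\mu_{NKRF}$ with non-decreasing background form, and invokes the twisted comparison principle of Remark~\ref{twist-PMA}. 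Your route avoids the twisted equation and the extra remark, at the cost of checking Lemma~\ref{lem:pluripotvisc} for $\psi_c$; both are equally short, and the conclusions coincide.
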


The convergence is uniform at the level of (properly normalized) potentials.
One can further show that the convergence holds in the ${\mathcal C}^{\infty}$-sense
in $Y_{reg}$ (see \cite{ST1}), if $S_0$ is a smooth K\"ahler form on $Y$.

\begin{proof}
We work on a log resolution $\pi:X \rightarrow Y$. Let $\theta_0 := \pi^* (\chi_0)$.
 Recall from \cite{EGZ09,EGZ11} that 
$$
\pi^*S_{KE}=\theta_0+dd^c \f_{KE},
$$
where $\f_{KE} \in PSH(X,\theta_0) \cap {\mathcal C}^0(X)$ is a viscosity/pluripotential solution
of the elliptic degenerate complex Monge-Amp\`ere equation
$$
(\theta_0+dd^c \f_{KE})^n=e^{\f_{KE}} \mu_{NKRF}.
$$
Thus $\f_{KE}$ is a fixed point (= static solution) of the NKRF and the comparison principle yields
$$
\| \f_t-\f_{KE} \|_{L^{\infty}(\R^+ \times X)} \leq \| \f_0-\f_{KE} \|_{L^{\infty}(X)}. 
$$

We can actually reinforce this uniform control by applying
 the comparison principle to the functions $u (t,x) =e^t \f(t,x)$ and $u_{KE} (t,x) =e^t \f_{KE} (x)$
which are $e^t \theta_0$-psh in $X$: observe indeed that $t \mapsto e^t \theta_0$ is non decreasing
and the $u_t$'s satisfy the twisted parabolic Monge-Amp\`ere equation
$$
(e^t \theta_0+dd^c u_t)^n=e^{e^{-t} \partial_t{u_t}+nt} \mu_{NKRF}.
$$
It follows therefore from Remark \ref{twist-PMA}
that for all $t>0$,
$$
\| \f_t-\f_{KE} \|_{L^{\infty}( X)} \leq e^{-t} \| \f_0-\f_{KE} \|_{L^{\infty}(X)},
$$
from which the conclusion follows.
\end{proof}

\subsubsection{Starting from an arbitrary class} Here we come back to the general case when $K_Y$ is nef and big.

\begin{theo}
Given any initial data $S_0$ which is an arbitrary positive current with continuous potentials in the K\"ahler class $\{\chi_0\}$, the K\"ahler-Ricci flow 
\begin{equation} \label{eq:NKRF}
\frac{\partial \omega_t}{\partial t}=-\rm{Ric}(\omega_t)-\omega_t
\end{equation}
can be run from $S_0$ and converges, as $t \rightarrow +\infty$, towards $S_{KE}$.
\end{theo}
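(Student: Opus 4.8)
The plan is to push the problem down to a log resolution $\pi:X\to Y$, where it becomes a complex Monge--Amp\`ere flow with canonical vanishing, and then to combine the long--time contraction coming from the rescaling $u=e^t\varphi$ with a comparison against the static K\"ahler--Einstein potential. First I would fix $\pi$ and a \emph{smooth semipositive} representative $\chi$ of $c_1(K_Y)$, which exists since $K_Y$ is nef and big, hence semi-ample by Kawamata, and set $\theta_t:=\pi^*\big(e^{-t}\chi_0+(1-e^{-t})\chi\big)$, $\theta_\infty:=\pi^*\chi$, together with $\mu_{NKRF}$ as in Section~1. Because $K_Y$ is nef one has $T_{\max}=+\infty$, so Theorem~\ref{thm:nkrf} (equivalently Theorem~\ref{thm:can} applied on each $[0,T]$, using that $t\mapsto\theta_t$ is very regular on every bounded interval) produces a unique bounded continuous viscosity solution $\varphi$ of $(\theta_t+dd^c\varphi_t)^n=e^{\partial_t\varphi+\varphi_t}\mu_{NKRF}$ on $[0,+\infty[\times X$ with $\varphi|_{t=0}=\pi^*\phi_0$. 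I would also recall from \cite{EGZ09,EGZ11} that $\pi^*S_{KE}=\theta_\infty+dd^c\varphi_{KE}$, where $\varphi_{KE}\in\mathrm{PSH}(X,\theta_\infty)\cap C^0(X)$ is the unique solution of the elliptic equation $(\theta_\infty+dd^c\varphi_{KE})^n=e^{\varphi_{KE}}\mu_{NKRF}$, so that establishing $\omega_t\to S_{KE}$ amounts to proving $\|\varphi_t-\varphi_{KE}\|_{L^\infty(X)}\to 0$.

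The second step is the change of unknown already used in Theorem~\ref{eq:cvnkrf}. With $u:=e^t\varphi$ one has $e^t\theta_t=\pi^*\chi_0+(e^t-1)\theta_\infty$, which is \emph{non-decreasing} in $t$ because $\chi\ge 0$, and $u$ solves the twisted parabolic equation $(e^t\theta_t+dd^c u_t)^n=e^{\,e^{-t}\partial_t u_t+nt}\,\mu_{NKRF}$ of the type considered in Remark~\ref{twist-PMA}. Since the background form is non-decreasing, the comparison principle (the twisted analogue of Corollary~\ref{Cor3}) applies; comparing the solutions $u,u'$ attached to two continuous initial data in the class $\{\chi_0\}$ gives $\|\varphi_t-\varphi'_t\|_\infty\le e^{-t}\|\varphi_0-\varphi'_0\|_\infty$. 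Hence it suffices to prove the convergence $\varphi_t\to\varphi_{KE}$ for a single convenient initial datum, and the dependence on the initial current is automatically uniform with exponential rate.

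For the convergence itself I would sandwich $u$ between barriers built from $\varphi_{KE}$. A direct computation gives $e^t\theta_t+dd^c(e^t\varphi_{KE})=e^t\,\pi^*S_{KE}+(\pi^*\chi_0-\theta_\infty)$, with $(\pi^*S_{KE})^n=e^{\varphi_{KE}}\mu_{NKRF}$. When $\chi_0\ge\chi$ pointwise on $Y$, $\varphi_{KE}$ is $\theta_t$-plurisubharmonic for every $t$ and $u_{KE}:=e^t\varphi_{KE}$ is a \emph{subsolution} of the twisted equation, so the twisted comparison principle yields $\varphi_t\ge\varphi_{KE}-e^{-t}\sup_X(\varphi_{KE}-\varphi_0)_+$, whence $\liminf_{t\to\infty}(\varphi_t-\varphi_{KE})\ge 0$; for the matching upper bound one compares instead with $e^t\varphi_{KE}+Ct$, which is a \emph{supersolution} once the mixed densities $(\pi^*S_{KE})^k\wedge(\pi^*\chi_0)^{n-k}/\mu_{NKRF}$ ($k<n$) are bounded, and lets $t\to\infty$. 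In the general case, where the Kähler class $\{\chi_0\}$ bears no prescribed relation to $c_1(K_Y)$, neither step is immediate, because $\varphi_{KE}$ need not be $\theta_0$-plurisubharmonic. Here I would run the argument after the $\varepsilon$-perturbation of Proposition~\ref{lem:vanish} (replacing the background family by $\varepsilon\Theta+\omega_t$ and, on the side of the limit equation, using the static solutions of $(\theta_\tau+dd^c\,\cdot\,)^n=e^{\,\cdot\,}\mu_{NKRF}$, which by the stability of the elliptic Kähler--Einstein solution \cite{EGZ09,EGZ11} converge uniformly to $\varphi_{KE}$ as $\tau\to\infty$ and serve, via Lemma~\ref{lemVSUP} and the regularity of $t\mapsto\omega_t$, as barriers on the short time windows $[\tau,\tau+1]$), obtain the two-sided estimate for the perturbed flow $\varphi(\varepsilon)$, and pass to the limit $\varepsilon\to 0$ using the locally uniform convergence $\varphi(\varepsilon)\to\varphi(0)$. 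Once $\varphi_t\to\varphi_{KE}$ uniformly on $X$, pushing down to $Y$ gives $\omega_t=\theta_t+dd^c\varphi_t\to\theta_\infty+dd^c\varphi_{KE}=\pi^*S_{KE}$, i.e. $\omega_t\to S_{KE}$.

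The main obstacle is precisely this mismatch between the initial cohomology class $\{\chi_0\}$ and the limiting class $c_1(K_Y)$: unlike the canonically polarized case of Theorem~\ref{eq:cvnkrf}, where $\varphi_{KE}$ is automatically a static solution of the flow and a one-line comparison suffices, one must here manufacture barriers that are positive for the \emph{initial} background form while still converging to the \emph{limiting} static solution, which is what forces the detour through the vanishing-viscosity perturbation (or, equivalently, through the static solutions of the nearby equations). Making either route robust --- in particular controlling the relevant mixed Monge--Amp\`ere densities, or the regularity of $t\mapsto\omega_t$ near $t=\tau$ uniformly in $\tau$ --- is the technical heart of the proof; everything else is a repackaging of the comparison principles of Section~2 and the Perron construction of Proposition~\ref{perron}.
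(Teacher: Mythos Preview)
Your setup is correct and matches the paper: reduce to the Monge--Amp\`ere flow on a log resolution, invoke Theorem~\ref{thm:nkrf} for long-time existence, and identify the target $\f_{KE}$ via the elliptic equation. The exponential contraction coming from $u=e^t\f$ is also valid and is exactly what the paper uses in the special case $\{\chi_0\}=c_1(K_Y)$.

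The genuine gap is in your upper bound. Your proposed supersolution $e^t\f_{KE}+Ct$ requires the ratios $(\pi^*S_{KE})^k\wedge(\pi^*\chi_0)^{n-k}/\mu_{NKRF}$ to be bounded for $k<n$, but this is typically \emph{false}: $\mu_{NKRF}$ vanishes along the exceptional divisors of $\pi$ while $\pi^*\chi_0$ need not, so these ratios blow up. Your fallback through Proposition~\ref{lem:vanish} does not repair this: that proposition gives $\f(\e)\to\f(0)$ only locally uniformly in $t$, and your ``short time windows $[\tau,\tau+1]$'' idea would need the barriers and the viscosity limit to be uniform as $\tau\to\infty$, which you do not establish. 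Likewise, your lower bound via $u_{KE}=e^t\f_{KE}$ only works under the extra hypothesis $\chi_0\ge\chi$, which is not assumed.

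The paper handles both bounds differently and more directly. For the lower bound it takes the \emph{convex combination}
\[
u(t,x)=e^{-t}\f_0(x)+(1-e^{-t})\f_{KE}(x)+h(t)e^{-t},
\]
which is automatically $\theta_t$-psh (since $\theta_t=e^{-t}\theta_0+(1-e^{-t})\theta_\infty$) and is a subsolution for a suitable $h(t)=O(t)$, with no hypothesis on $\chi_0$ versus $\chi$. For the upper bound it avoids mixed-density issues altogether: one fixes an auxiliary \emph{K\"ahler} form $\beta$ on $X$ (ultimately $\beta_\e=\theta_\infty+\e\eta$), solves the static equation $(\beta+dd^c\p)^n=e^{\p}\mu_{NKRF}$, and checks that $v(t,x)=(1+e^{-t})\p(x)+h(t)e^{-t}+Be^{-t}$ is a supersolution of the flow with background $e^{-t}\theta_0+(1-e^{-t})\beta$ after a time shift ensuring $\theta_0\le 2\beta$; since this perturbed background dominates $\theta_t$, $\f$ is a subsolution of that flow and comparison gives $\f_t\le \p^\e+O(te^{-t})$. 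Letting $\e\to 0$ and using the elliptic stability $\p^\e\to\f_{KE}$ (uniform by Dini) gives the upper bound. So the key manoeuvre you are missing is to perturb the \emph{limiting} form to a genuine K\"ahler form on $X$ and build the supersolution from the corresponding static solution, rather than trying to force $\f_{KE}$ itself (or $e^t\f_{KE}+Ct$) to serve as a barrier for the original flow.
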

Again the convergence is uniform at the level of (properly normalized) potentials.
One can further show that the convergence holds in the ${\mathcal C}^{\infty}$-sense
in $Y_{reg}$ (see \cite{ST1}), if $S_0$ is a smooth K\"ahler form on $Y$.

\begin{proof}  
Theorem~\ref{thm:nkrf} implies that  the equation (\ref{eq:NKRF}) has a unique solution starting from $S_0$.
It is clear that at the level of cohomology classes $\{ \omega_t\} \to c_1 (K_Y)$ as $t \to + \infty$.
We want to show that this is the case for the flow itself. This can be done using the comparison principle at the level of potentials.

We work on a log resolution $\pi:X \rightarrow Y$ so that (\ref{eq:NKRF}) is equivalent to the following Monge-Amp\`ere flow:
$$
(\theta_t+dd^c \f_t)^n=e^{\partial_t \f+\f_t} \mu_{NKRF},
$$
 where $\theta_t := \pi^* (\chi_t)$ and $\mu_{NKRF}$ is a volume form with canonical vanishing i.e. locally $\mu_{NKRF} =\Pi_E |f_E|^{2a_E} dV_X$.
 
 Let $\phi_{KE}$ be the potential of the singular K\"ahler-Einstein metric $S_{KE}$ on $Y$ given by \cite{EGZ09} i.e. $S_{KE} = \chi + dd^c \phi_{KE}$ and $(\chi + dd^c \phi_{KE})^n = e^{\phi_{KE}} d V_Y$.
 Define $\theta_{\infty} := \pi^*(\chi)$ and $\f_{KE} := \pi^* (\phi_{KE})$. Then the K\"ahler-Einstein equation can be written as 
 $$
 (\theta_{\infty}  + dd^c \f_{KE})^n = e^{\f_{KE}}  \mu_{NKRF}.
 $$

 The proof will be completed in three steps.
 
 \smallskip
\noindent {\bf Step 1:} We first establish a lower bound for the solution $ \f$
by finding an appropriate subsolution to the Cauchy problem for the flow (\ref{eq:PMAF}). Consider
$$
u (t,x) := e^{- t} \f_0 + (1-e^{- t}) \f_{KE} + h (t)e^{- t},
$$
on $\R^+ \times Y$,  where $h$ is a $C^1$ function in $\R^+$ to be chosen so that $u$ is a subsolution to the Cauchy problem for the flow (\ref{eq:PMAF}).

Observe that $ u (0,x) = \f_0$ if $h (0) = 0$ and for all 
$t > 0$, 
$$
\theta_t + dd^c u_t = e^{- t} (\theta_0 + dd^c \f_0) + 
(1-e^{- t}) (\theta_{\infty} + dd^c \f_{KE}) \geq 0
$$ 
in the weak sense of currents, hence  $u_t $ is $\theta_t$-psh and satisfies the inequality
$$
(\theta_t + dd^c u_t)^n \geq (1-e^{- t})^n (\theta_{\infty} + dd^c \f_{KE})^n = (1-e^{- t})^n e^{\f_{KE}} d V_Y.
$$
in the pluripotential sense on $X$. 

On the other hand $\partial_t u + u = \f_{KE} + h'(t) e^{- t}$  thus $u$ is a subsolution if 
$ (1-e^{- t})^n \leq e^{h'(t) e^{- t}}$. We therefore choose $h$ to be the unique solution of the ODE $h'(t)  =  n e^t \log (1-e^{- t})$ with $h (0) = 0$. We let the reader check that
$$
h(t)=n \left\{ (e^t-1)\log (e^t-1)-e^t \log (e^t) \right\}
=O (t)
\text{ as } t \to + \infty.
$$
 
It follows therefore  from Lemma  \ref{lem:pluripotvisc} that
 $u$  is a subsolution to the Cauchy problem for the normalized Monge-Amp\`ere flow (\ref{eq:PMAF}). By the comparison principle  we have $u\leq \phi$ in $ \R^+ \times X$ i.e. 
 \begin{equation} \label{eq:lb}
  \f_{KE} (x) - \f (t,x) \leq h (t) e^{-t} = O (t e^{-t}),
  \end{equation}
  for all $(t,x) \in \R^+ \times X$.

 \medskip

 The proof of the upper bound is  done by constructing an appropriate supersolution to the 
Cauchy problem. The construction is more involved and uses our earlier results in the degenerate elliptic case. We proceed in two steps

\smallskip
  
 \noindent {\bf Step 2:} We first assume that $K_Y$ is ample.
Fix $\beta$ an arbitrary K\"ahler form on $X$ and set $\theta_t := e^{-t} \theta_0 + (1 - e^{- t}) \beta$.  
Let $\f$ be the solution to the Monge-Amp\`ere flow 
 \begin{equation} \label{eq:PMAFbis}
 (\theta_t + dd^c \f_t)^n = e^{\partial_t \f + \f} \mu_{NKRF},
\end{equation}
 and let $ \p$ be the solution to the degenerate elliptic equation 
 \begin{equation} \label{eq:beta}
 (\beta + dd^c \p)^n = e^{\p} \mu_{NKRF}.
\end{equation}
  
Assume moreover that $\theta_0 \leq \beta$ and consider the function
$$
 v (t,x) := \p + C e^{- t},
$$
defined on $\R^+ \times X$, where $C := \max_X (\phi_0 - \p) > 0$ is chosen so that 
$v_0 = C + \p \geq \phi_0$ in $X$. 
This implies that $dd^c v_t + \theta_t \leq dd^c \p + \beta$ hence for all $t > 0$, 
$$
(dd^c v_t + \theta_t)^n \leq ( dd^c \p + \beta)^n = e^{\p} = e^{\partial_t v + v} \mu_{NKRF},
$$
in the sense of viscosity on $X$. 

Therefore $v$ is a supersolution to the flow (\ref{eq:PMAFbis}) and the comparison principle
yields the upper bound
$$
 \f (t,x) \leq \p (x)  + e^{- t} \max_X (\f_0 - \p)
$$

When $\beta$ is an arbitrary K\"ahler form on $X$, it follows from the definition of $\theta_t$ that there exists $T >> 1$ such that $\theta_t \leq 2 \beta$ for $t \geq T$. The K\"ahler-Ricci flow  starting from the current $\theta_T + dd^ c \f_T$ has a unique solution given by $ \phi (t,x) := \f (t + T,x)$ for $(t,x) \in \R^+ \times X$. Translating in time we can thus assume that $\theta_0 \leq 2 \beta$. Set
$$
v (t,x) := ( 1 + e^{-t}) \p (x) + h (t) e^{- t} + B e^{-t},
$$
where $h$ is a smooth function, $ h (0) = 0$ and  $B := \max_X (\f_0 - 2 \p)$ so that $v_0 \geq \f_0$. 
We want $v$ to be a supersolution of the flow (\ref{eq:PMAFbis}). 
Since $dd^c v_t + \theta_t \leq (1 + e^{-t}) (dd^c \p + \beta)$ we get
$$
(dd^c v_t + \theta_t)^n \leq (1 + e^{-t})^n (dd^c \p + \beta)^n = (1 + e^{-t})^n e^{\p}.
$$
Since $\partial_t v + v  = \p - h' (t) e^{- t}$ we impose $- h' (t) e^ {-t} = n \log (1 +  e^{-t})$. 
Observe again that $h (t) = O (t)$. By the comparison principle we conclude that $\f (t,x) \leq v (t,x)$ hence
\begin{equation} \label{eq:ub1}
  \f (t,x) \leq \p +  (\max_X (\f_0 - 2 \p) + \max_X \p) e^{- t} +   h (t)  e^{-t}.
 \end{equation}
 From (\ref{eq:lb}) and (\ref{eq:ub1}) we conclude, when $K_Y$ is ample
that $\vert \f_t - \f_{KE} \vert= O (t e^ {- t})$ as $t \to + \infty$.

\medskip

\noindent {\bf Step 3:} We now establish the upper bound when $K_Y$ is merely nef and big.
We set $\beta = \theta_{\infty} := \pi^* (\chi),$ where $\chi$ is semi-positive and big and represents the canonical class $K_Y$. The solution to the corresponding (\ref{eq:beta}) is the function function $\p = \f_{KE}$. 

We approximate $\beta$ by K\"ahler forms $\beta_\e := \beta + \e \eta$ for $\e > $ small enough, where $\eta > 0$ is a fixed K\"ahler form on $X$.
Set $\theta^\e _t:= e^{-t} \theta_0 + (1-e^{- t}) \beta_\e$ and solve as in Step 2 the corresponding complex Monge-Amp\`ere flow
\begin{equation} \label{eq:PMAFe}
(\theta^\e_t  + dd^c \f^\e_t)^n=e^{\partial_t \f^\e+\f^\e_t} \mu_{NKRF},
\end{equation}
with Cauchy data $\f^\e_0 = \f_0$ which is $\theta^\e_0$-psh in $X$ since $\theta^\e_0 = \theta_0$.
Let $\p^\e$ be the  continous $\beta_\e$-psh solution of the degenerate elliptic equation
$$
(\beta_\e + dd^c \p^\e)^n = e^{\p^\e} \mu_{NKRF},
$$
which exists by \cite{EGZ09}.
It follows from Step 2  that there exists $t_\e > 1$ such that for $t \geq t_\e$ and $x \in X$,
$$
 \f^\e (t,x) \leq  \p^\e (x) + e^{- t} \max_X (\f(t_\e,x) - 2 \p^\e(x)) +  h (t) e^{-t},
$$
where $h$ is a smooth function satisfying the $h' (t) e^ {-t} = n \log (1 + 2 e^{-t})$ with $h (0) = 0$.

Since $\theta \leq \theta^\e$, the function $\f$ is a supersolution to the parabolic equation (\ref{eq:PMAFe}) with the same Cauchy condition.
Moreover the family $t \longmapsto \theta^\e_t$ is very regular in the sense of Definition \ref{defi:oregular2}.
The comparison principle yields $ \f \leq \f^\e$ on $\R^+ \times X$.
Therefore  
\begin{eqnarray}
\label{eq:ub2}
 \f (t,x) - \f_{KE}(x)  &\leq & \p^\e (x) - \f_{KE}(x) \\
 \nonumber & + & \max_X (\f(t_\e,x) - 2 \p^\e(x)) \\
\nonumber & +& \max_X \p^\e + h (t)) e^{-t}.
\end{eqnarray} 
for $t \geq t_\e$ and $x \in X$.
The comparison principle shows that the family $(\p_\e)_{\e > 0}$ is non increasing and   $\p^\e \to \f_{KE}$ pointwise in $X$  as $\e \to 0$ (see \cite{EGZ11}).
The convergence $\p^\e \to \f_{KE}$ is uniform on $X$, as follows from Dini's lemma.

By using (\ref{eq:lb}) and (\ref{eq:ub2}), we conclude that $\f_t \to \f_{KE}$ uniformly on $X$ as $t \to + \infty$. Thus $\theta_t + dd^c \f_t \to \theta_{\infty} + dd^c \f_{KE}$. Pushing down to $Y$ we conclude that $\omega_t \to S_{KE}$ weakly on $Y$.
\end{proof}

\subsection{Calabi-Yau varieties}

Let $Y$ be a $\Q$-Calabi-Yau variety, i.e. a Gorenstein K\"ahler space of finite index with trivial first Chern class (see \cite[Definition 7.4]{EGZ09}). 

Fix $\chi_0$ a K\"ahler form on $Y$ and $S_0=\chi_0+dd^c \phi_0$ a positive closed current  with a continuous potential 
$\phi_0 \in PSH(Y,\chi_0) \cap {\mathcal C}^0(Y)$. The  K\"ahler-Ricci flow
$$
\frac{\partial \omega_t}{\partial t}=-\rm{Ric}(\omega_t)
$$
preserves the cohomology class $\{ \chi_0 \}$ since $c_1(Y)=0$. Thus
$\omega_t=\chi_0+dd^c \phi_t$ and the KRF 
can be written at the level of potentials as the complex Monge-Amp\`ere flow
$$
(\chi_0 +dd^c \phi_t)^n=e^{\partial_t \phi} dV_Y
$$
for some admissible volume form $dV_Y$

It follows  from Theorem \ref{thm:can} that the corresponding
complex Monge-Amp\`ere flow on a log resolution $\pi : X \longrightarrow Y$ with initial data $\f_0 := \phi_0 \circ \pi$ has a unique viscosity solution $\f$.
This shows in particular that the K\"ahler-Ricci flow in the sense of Definition \ref{krf} can be run on $Y$ from an 
initial data $S_0$  which is an arbitrary positive current with continuous potentials. 
The solution exists for all times $t>0$. Again, we recover
one of the main results of \cite{ST2}.   

\smallskip

It follows from \cite[Theorem 7.5]{EGZ09} that $Y$ admits a unique singular Ricci flat 
K\"ahler-Einstein current $S_{KE}$ in the K\"ahler class $\{\theta_0\}$, which is a smooth
bona fide K\"ahler-Einstein metric on the regular part $Y_{reg}$ of $Y$, and admits
globally continuous potentials at singular points $Y_{sing}$, thanks to \cite{EGZ11}.

\begin{theo}
Let $Y$ be a $\Q$-Calabi-Yau variety and fix $\al_0 \in {\mathcal K}(Y)$ a K\"ahler class.
Given any initial data $S_0 \in \al_0$  which is an arbitrary positive current with continuous potentials on $Y$,
the K\"ahler-Ricci flow 
\begin{equation} \label{eq:unkrf}
\frac{\partial \omega_t}{\partial t}=-\rm{Ric}(\omega_t)
\end{equation}
can be run from $S_0$ and converges, as $t \rightarrow +\infty$, towards the singular Ricci flat 
K\"ahler-Einstein current $S_{KE} \in \al_0$.
\end{theo}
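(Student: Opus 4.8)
\emph{Setup.} Working on a log resolution $\pi\colon X\to Y$ and writing $\theta_0:=\pi^*\chi_0$, the flow (\ref{eq:unkrf}) becomes, at the level of potentials, the \emph{autonomous} complex Monge-Amp\`ere flow $(\theta_0+dd^c\varphi_t)^n=e^{\partial_t\varphi}\,\mu_{NKRF}$, with $\theta_0$ time-independent, $\mu_{NKRF}$ of canonical-vanishing type, $F\equiv 0$, and Cauchy datum $\varphi_0:=\pi^*\phi_0$; its existence and uniqueness are the content of Theorem \ref{thm:can} (applicable since $t\mapsto\theta_0$ is constant, hence non-decreasing), and by Lemma \ref{reso} the solution descends to $Y$ with $\omega_t=\chi_0+dd^c\phi_t$. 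By \cite[Theorem 7.5]{EGZ09} one has $S_{KE}=\chi_0+dd^c\phi_{KE}$ with $\phi_{KE}\in\mathrm{PSH}(Y,\chi_0)\cap C^0(Y)$ representing $\alpha_0=\{\chi_0\}$, and $\varphi_{KE}:=\pi^*\phi_{KE}$ solves $(\theta_0+dd^c\varphi_{KE})^n=c\,\mu_{NKRF}$ for a constant $c>0$. Rescaling $\mu_{NKRF}$ by a positive constant only adds a function of $t$ alone to $\varphi$ and so does not change $\omega_t$; hence we may assume $c=1$, and then, by Lemma \ref{lem:pluripotvisc}, $\varphi_{KE}$ — and every $\varphi_{KE}+\mathrm{const}$ — is a static classical solution of the flow.

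\emph{Uniform bound and monotonicity of the extremes.} Hypothesis (\ref{Condition}) holds (take $\theta=\theta_0$, which is semi-positive with $\{\theta_0\}^n=\{\chi_0\}^n>0$), so Corollary \ref{Cor2} applies. Comparing $\varphi$ with the static solution $\varphi_{KE}$ in both directions gives $\|\varphi_t-\varphi_{KE}\|_{L^\infty(X)}\le\|\varphi_0-\varphi_{KE}\|_{L^\infty(X)}$ for all $t$. Set $M(t):=\max_X(\varphi_t-\varphi_{KE})$ and $m(t):=\min_X(\varphi_t-\varphi_{KE})$. Since the equation is autonomous, $(s,x)\mapsto\varphi_{t_0+s}(x)$ is again a solution; comparing it, via Corollary \ref{Cor2}, with the static solutions $\varphi_{KE}+M(t_0)$ and $\varphi_{KE}+m(t_0)$ yields $M(t_0+s)\le M(t_0)$ and $m(t_0+s)\ge m(t_0)$. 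Thus $M$ is non-increasing, $m$ is non-decreasing, both are bounded, and they converge: $M(t)\searrow M_\infty$ and $m(t)\nearrow m_\infty$ with $M_\infty\ge m_\infty$. Granting $M_\infty=m_\infty$, we get $\varphi_t\to\varphi_{KE}+M_\infty$ uniformly, hence $\omega_t=\theta_0+dd^c\varphi_t\to\pi^*S_{KE}$; pushing down, $\omega_t\to S_{KE}$, and since the cohomology class is constantly $\alpha_0$ this is the current $S_{KE}\in\alpha_0$.

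\emph{Decay of the oscillation — the crux.} I would show $\max_X(\varphi_t-\varphi_{KE})-\min_X(\varphi_t-\varphi_{KE})\to 0$ via a Monge-Amp\`ere energy monotonicity combined with pluripotential stability. First, $\varphi$ is locally Lipschitz in $t$: combine the comparison estimate $\|\varphi_{t+h}-\varphi_t\|_{L^\infty(X)}\le\|\varphi_h-\varphi_0\|_{L^\infty(X)}$ with the sub/super-barriers of Lemma \ref{Sub-Super} and Proposition \ref{prop:bar}, which are Lipschitz in $t$ and hence bound the right-hand side by $C|h|$. Consequently, for a.e.\ $t$ the derivative $\partial_t\varphi_t$ exists and, by the viscosity/pluripotential identification of the time-slices (\cite{EGZ14}, using \cite[Theorem 1.9]{EGZ11}), $(\theta_0+dd^c\varphi_t)^n=e^{\partial_t\varphi_t}\mu_{NKRF}$ holds in the pluripotential sense. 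Then $\mathcal E(t):=\int_X(\varphi_t-\varphi_{KE})\,\mu_{NKRF}$ is Lipschitz with $\mathcal E'(t)=\int_X\log\bigl((\theta_0+dd^c\varphi_t)^n/\mu_{NKRF}\bigr)\,\mu_{NKRF}\le 0$ by Jensen's inequality, the two measures having the same total mass $\{\theta_0\}^n$. Therefore $\mathcal E$ decreases to a finite limit and $\int_0^{+\infty}(-\mathcal E'(t))\,dt<\infty$; since $-\mathcal E'(t)$ is, up to a fixed factor, the relative entropy of $\mu_{NKRF}$ with respect to $(\theta_0+dd^c\varphi_t)^n$, there is a sequence $t_j\to+\infty$ along which it tends to $0$, hence — by Pinsker's inequality — $(\theta_0+dd^c\varphi_{t_j})^n\to\mu_{NKRF}$ weakly. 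The $\varphi_{t_j}$ being uniformly bounded $\theta_0$-plurisubharmonic functions, a subsequence converges to some $\varphi^\flat$; the stability theory of \cite{EGZ09} gives $(\theta_0+dd^c\varphi^\flat)^n=\mu_{NKRF}$, so $\varphi^\flat=\varphi_{KE}+c_0$ for a constant $c_0$ by the uniqueness part of \cite[Theorem 7.5]{EGZ09}, and the convergence $\varphi_{t_j}\to\varphi_{KE}+c_0$ is uniform by Dini's lemma. Then $M(t_j)\to c_0$ and $m(t_j)\to c_0$, so $M_\infty=m_\infty=c_0$ by the monotonicity just established.

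\emph{Main obstacle.} The real difficulty is this last step. Comparison-principle arguments alone deliver the uniform bound and the monotone convergence of $M$ and $m$, but not $M_\infty=m_\infty$: unlike in the canonically polarized case, the limiting elliptic equation $(\theta_0+dd^c u)^n=\mu_{NKRF}$ admits a one-parameter family of solutions and satisfies no comparison principle, so nothing pins $\varphi_t$ down. One must therefore introduce an a priori monotone functional (the energy $\mathcal E$), which forces the Lipschitz-in-$t$ regularity of the viscosity solution and the identification of its slices with pluripotential (sub/super)solutions to be established first, and one must then upgrade subsequential convergence to convergence of the whole family — precisely where the monotonicity of $M$ and $m$ re-enters. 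The pluripotential input, that weak convergence of the Monge-Amp\`ere measures forces uniform convergence of the bounded potentials once the limit is continuous, rests on \cite{EGZ09,EGZ11}. (A Barles--Perthame half-relaxed-limit argument identifies the upper and lower relaxed limits of $\varphi_t$ as bounded solutions of the stationary equation, but still leaves the constant gap open, so the energy monotonicity — or some equivalent input — cannot be dispensed with.)
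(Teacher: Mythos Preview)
Your route is genuinely different from the paper's and has an attractive structure (autonomy $\Rightarrow$ monotonicity of $M(t),m(t)$; entropy decay to close the gap), but the ``crux'' step rests on two claims that are not justified by the tools in this paper.

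\textbf{The Lipschitz-in-$t$ bound fails as argued.} You deduce $\|\varphi_{t+h}-\varphi_t\|_\infty\le\|\varphi_h-\varphi_0\|_\infty$ from autonomy (fine), and then want $\|\varphi_h-\varphi_0\|_\infty\le C|h|$ from the barriers. But the $\e$-subbarrier of Proposition~\ref{prop:bar} has slope $C(\eta)\ge -n\log\eta\to+\infty$ as $\e\to 0$, and the superbarrier for canonical vanishing (Lemma~\ref{SuperBar}) is $v(t,x)=\tilde\varphi_0-t\,u(x)+Ct$ with $u$ unbounded below on the exceptional set, hence \emph{not} uniformly Lipschitz in $t$. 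So the barriers only give $\|\varphi_h-\varphi_0\|_\infty\le C_\e h+\e$ with $C_\e\to\infty$, i.e.\ continuity at $t=0$, not a Lipschitz bound. Without a uniform bound on $\partial_t\varphi_t$ the whole energy computation collapses: you cannot write $\mathcal E'(t)=\int\log\bigl((\theta_0+dd^c\varphi_t)^n/\mu_{NKRF}\bigr)\mu_{NKRF}$, and even if you could, the density $e^{\partial_t\varphi_t}$ would not be bounded, so the passage from $L^1$ to $L^p$ convergence (needed for stability) breaks down.

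\textbf{The pluripotential identification of time-slices is not available.} You need that, for a.e.\ $t$, the viscosity slice $\varphi_t$ satisfies $(\theta_0+dd^c\varphi_t)^n=e^{\partial_t\varphi_t}\mu_{NKRF}$ in the \emph{pluripotential} sense; you cite \cite{EGZ14} and \cite[Theorem~1.9]{EGZ11}. But Lemma~\ref{lem:pluripotvisc} here only goes the other way, and the paper explicitly states (just before that lemma) that ``it is not clear to us how to interpret viscosity solutions of Complex Monge-Amp\`ere flows in terms of pluripotential theory.'' So the very step that lets you integrate the flow equation against $\mu_{NKRF}$ is not established. (A minor point: the appeal to Dini's lemma for the subsequence $\varphi_{t_j}$ is also misplaced, since there is no monotonicity.)

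\textbf{How the paper circumvents this.} The paper never tries to control $\partial_t\varphi$ or pass to pluripotential slices. Instead it perturbs the flow to
\[
(\theta_0+dd^c\phi_t)^n=e^{\partial_t\phi+\e(\phi-M_0)}\mu_{NKRF}
\quad\text{and}\quad
(\theta_0+dd^c\psi_t)^n=e^{\partial_t\psi+\e(\psi-m_0)}\mu_{NKRF},
\]
for which $\varphi$ is a sub-, respectively super-, solution (by the uniform bound $m_0\le\varphi\le M_0$). Each perturbed flow has $\alpha=\e>0$, hence a \emph{strict} static attractor $u^\e$ (resp.\ $v^\e$) and an exponential contraction $e^{-\e t}$ from the twisted comparison principle (Remark~\ref{twist-PMA}); elliptic stability from \cite{EGZ11} gives $u^\e,v^\e\to u$ uniformly. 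Squeezing yields $\varphi_t\to u$ uniformly, entirely within the viscosity framework. This is exactly the mechanism that resolves the ``constant gap'' you correctly identified, without invoking energy, entropy, or slice regularity.
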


The convergence here is uniform on $Y$ at the level of (properly normalized) potentials. A parabolic version
of Yau's ${\mathcal C}^2$-estimate, together with Tsuji's trick and 
parabolic Evan's-Krylov+Schauder theory allow to show that the convergence 
holds in the ${\mathcal C}^{\infty}$-sense
in $Y_{reg}$ (see \cite{ST1}) when $S_0$ is a smooth K\"ahler form on $Y$.

\begin{proof}
The  K\"ahler-Ricci flow (\ref{eq:unkrf}) is equivalent to the following  complex Monge-Amp\`ere flow on 
$X$, a log resolution $\pi : X \longrightarrow Y$
\begin{equation} \label{eq:unMAF}
(\theta_0 +dd^c \f_t)^n=e^{\partial_t \f} \mu_{NKRF},
\end{equation}
starting at $\f_0$ with the usual notations.

By Theorem~\ref{thm:can}, this flow has a unique solution $\f$ defined in $\R^ + \times X$. Observe that the solution $\f$ is uniformly bounded in $\R^+ \times X$. Indeed let $\rho$ be a solution to the degenerate elliptic equation $(\theta_0 + dd^c \rho)^n = d V_Y$ on $Y$ normalized by $\max_X (\f_0 - \rho) = 0$, which exists by \cite{EGZ09}.
The function $\p (t,x) := \rho(x)$ is a solution to the Monge-Amp\`ere flow (\ref{eq:unMAF}) with Cauchy condition $\p_0 = \rho$.
By the comparison principle we conclude that for any $(t,x) \in \R^+ \times X$, we have 
$$
  \rho (x) - \max_X (\rho - \f_0) \leq \f (t,x)  \leq  \rho (x).
$$
This shows that there exists uniform constants $m_0 , M_0$ such that $m_0 \leq \f (t,x) \leq M_0$ for all $(t,x) \in \R^ + \times X$.

The proof of the convergence theorem goes by approximating by  perturbed complex Monge-Amp\`ere  flows and by using the comparison principle as in the proof of \cite[Theorem 5.2]{EGZ14}.

\smallskip

We first prove an upper bound.
Consider the  flows
\begin{equation} \label{eq:MAFe}
(\theta_0+dd^c \phi_t)^n=e^{\partial_t \phi+\e (\phi - M_0)} d V_Y,
\end{equation}
starting at $\f_0$, where $\e>0$ is a parameter that we shall eventually let converge to zero.

By Theorem~\ref{thm:can}, the flow (\ref{eq:MAFe}) has a unique viscosity solution 
$\f^\e$ on $\R^+ \times X$.
Observe that $\f$ is a subsolution to this flow by the choice of $M_0$. 
The comparison principle thus insures
$$
\f (t,x) \leq \f^\e (t,x), \, \, \text{in} \, \, \, \R^+ \times X.
$$
It remains to estimate $\f^\e$ from above. 
For $\e>0$ fixed, the solution of the perturbed flow uniformly converges, as $t \rightarrow +\infty$,
to the solution of the static equation
$$
(\theta_0+dd^c u^\e)^n=e^{\e (u^\e - M_0)}  d V_Y,
$$
using a similar reasoning as in the previous section.

By the strong version of the comparison principle for the equation (\ref{eq:MAFe}) as in the proof of Theorem~\ref{eq:cvnkrf}, we have
$$
\max_{\R^ + \times X} \vert \phi^\e (t,x) - u^\e (x) \leq e^ {- \e t} \max_X \vert \f_0 (x) - u^\e (x)\vert.
$$

Moreover by stability of solutions to degenerate complex Monge-Amp\`ere equations established in \cite{EGZ11} we know that $u^\e \to u$ uniformly on $X$ to the solution $u$ of the equation $(\theta_0+dd^c u)^n=  d V_Y,$ normalized by the condition $\int_Y u d V_Y = 0$. We infer
\begin{equation} \label{eq:ubd}
\f (t,x) - u (x) \leq e^{- \e t}  \max_X \vert \f_0 (x) - u^\e (x)\vert + \max_X \vert u^\e (x) - u (x)\vert.
\end{equation}

\smallskip

We now take care of the lower bound. Consider for $\e > 0$
\begin{equation} \label{eq:MAFe}
(\theta_0+dd^c \psi_t)^n=e^{\partial_t \psi+\e (\psi - m_0)} d V_Y,
\end{equation}
starting at $\f_0$.
Observe that $\f$ is a supersolution to this flow by the choice of $m_0$.
Theorem~\ref{thm:can} guarantees that this flow has a unique viscosity solution $\psi^\e$. The comparison principle thus yields
$$
\psi^\e(t,x)  \leq \f (t,x), \, \, \text{in} \, \, \, \R^+ \times X.
$$

We now estimate $\p^\e$ from below. 
For $\e>0$ fixed, the solution of the perturbed flow uniformly converges, as $t \rightarrow +\infty$,
to the solution of the static equation
$$
(\theta_0+dd^c v^\e)^n=e^{\e (v^\e} - m_0)  d V_Y,
$$
Again by stability of solutions to degenerate complex Monge-Amp\`ere equations established 
in \cite{EGZ11} we know that $v^\e \to u$ uniformly on $X$, where $u$ is the unique solution  of the equation $(\theta_0+dd^c u)^n=  d V_Y,$ normalized by the condition $\int_X u \mu_{NKRF} = 0$.
As above we obtain the lower bound 
\begin{equation} \label{eq:lbd}
u (x) - \f (t,x) \leq \e^{- \e t}  \max_X \vert v^\e (x) - \f_0 (x)\vert + \max_X \vert u (x) - v^\e (x)\vert.
\end{equation}
It is now clear from (\ref{eq:ubd}) and (\ref{eq:lbd}) that $\f_t \to u$ uniformly in $X$ as $t \to + \infty$.

 Pushing down everything to $Y$ we see that  $\omega_t =\theta_0 + dd^c \phi_t \to \theta_0 + dd^c u = S_{KE}$, as $t \to + \infty$,  as claimed.
\end{proof}

\subsection{Smoothing properties of the K\"ahler-Ricci flow}

Smoothing properties of the K\"ahler-Ricci flow have been observed and used by many authors in the last thirty years (see e.g. \cite{BM87,Tian97,PSSW08}). 

Attempts to run the K\"ahler-Ricci flow from a degenerate initial data have
motivated several recent works \cite{CD07,CT08,CTZ11,ST2,SzTo}. 
The best result (before \cite{GZ13}) is that of Song and Tian \cite{ST2}
who showed that  on a projective variety $Y$ with canonical singularities,
the K\"ahler-Ricci flow 
$$
\frac{\partial \omega_t}{\partial t}=-\rm{Ric}(\omega_t)
$$
can be run from an initial data $T_0=\chi_0+dd^c \phi_0$ which is a positive current
with continuous potentials.\footnote{The precise assumption in \cite{ST2} is a bit more restrictive but
can easily be extended to this statement as observed in \cite{BG13}.}
It is then classical that the flow exists on a maximal interval of time $[0,T_{max}[$, where
$$
T_{max}=\sup \{t>0 \, | \, \{ \omega_0\}-tc_1(Y) \text{ is K\"ahler } \}.
$$

The parabolic viscosity approach we have developed in this article allows us to show that
the potentials constructed in all these works are globally continuous on $ [0,T_{max}[ \times Y$.

\begin{theo}
Let $Y$ be a projective variety with at worst canonical singularities. Fix 
$\chi$ a smooth closed form representing $c_1(K_Y)$, $\chi_0$ a K\"ahler form on $Y$
and let  $S_0=\chi_0+dd^c \phi_0$ be a positive current with a continuous potential on $Y$.
The K\"ahler-Ricci flow  with initial data $S_0$
$$
\frac{\partial \omega_t}{\partial t}=-\rm{Ric}(\omega_t)
$$
 admits a unique solution $\omega_t=\chi_0 + t\chi+ dd^c \phi_t$, with 
\begin{itemize}
\item for all $0<t<T_{max}$, the function $x \mapsto \f_t(x)$ is a $\chi_t$-psh function on $Y$
which is smooth in $Y_{reg}$;
\item $(t,x) \mapsto \f(t,x)$ continuous on $[0,T_{max}[ \times Y$.
\end{itemize}
\end{theo}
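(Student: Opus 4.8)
The plan is to first produce the solution, together with its uniqueness and global continuity, from the abstract machinery of Sections~2--3, and then to upgrade it to interior smoothness by comparison with the regularized flows of \cite{ST2}. I would work on a log resolution $\pi:X\to Y$ taken to be an isomorphism over $Y_{reg}$, so that $\mathrm{Exc}(\pi)$ lies over $Y_{sing}$ and $K_X\equiv\pi^*K_Y+\sum_E a_E E$. After the harmless replacement of $\chi_0+t\chi$ by a smooth (affine) family $\chi_t^{\mathrm K}$ of K\"ahler forms on $Y$ in the classes $\{\chi_0\}+t\,c_1(K_Y)$ (which are K\"ahler for $t<T_{max}$), pulling back turns the K\"ahler--Ricci flow into $(CMAF)_{X,\theta_t,\mu_{NKRF},0}$, where $\theta_t:=\pi^*\chi_t^{\mathrm K}$ and $\mu_{NKRF}$ has \emph{canonical vanishing}, i.e. locally $\mu_{NKRF}=\prod_E|f_E|^{2a_E}v$ with $v>0$ smooth; thus $\mu_{NKRF}=e^{u}f\,dV$ with $e^{u}=\prod_E|f_E|^{2a_E}$ continuous and $f\,dV$ a smooth positive volume form, so the standing hypothesis of Theorem~\ref{thm:can} holds. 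Exactly as in the proof of Theorem~\ref{thm:nkrf}, $t\mapsto\theta_t$ satisfies $(\ref{Condition})$ and is very regular in the sense of Definition~\ref{defi:oregular2} on each $[0,T']$ with $T'<T_{max}$. Theorem~\ref{thm:can} with $\alpha=0$ then gives a unique bounded viscosity solution on $[0,T'[\times X$ with datum $\pi^*\phi_0$, and by uniqueness these glue to a unique viscosity solution $\varphi$ on $[0,T_{max}[\times X$, continuous there; by Lemma~\ref{reso} it descends, $\varphi=\pi^*\phi$. This yields existence and uniqueness of the flow from $S_0$, the fact that the maximal interval of existence is $[0,T_{max}[$ (forced by $t\mapsto\{\omega_t\}$), the global continuity of $\phi$ on $[0,T_{max}[\times Y$ (second bullet), and, via the Proposition following Lemma~\ref{lem:pluripotvisc} together with Lemma~\ref{reso}, that $\phi_t\in\mathrm{PSH}(Y,\chi_t)$ for every $t$.

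For the interior smoothness I would identify $\phi$ with the weak K\"ahler--Ricci flow of \cite{ST2} --- whose potentials are known to be smooth on $Y_{reg}$ for $t>0$ --- and transfer that regularity. The Song--Tian flow arises as a limit of genuinely non-degenerate smooth flows: take smooth $\theta_0$-psh functions $\phi_0^j\downarrow\phi_0$ (Demailly's regularization, cf.~\cite{Dem92}), smooth positive volume forms $\mu_\delta\downarrow\mu_{NKRF}$, and the K\"ahler forms $\e\Theta+\theta_t$; for $\e,\delta>0$ the flow $(\e\Theta+\theta_t+dd^c\psi)^n=e^{\partial_t\psi}\mu_\delta$ with $\psi_0=\phi_0^j$ has a unique classical solution $\psi^{\e,\delta,j}$ on $[0,T^\e[\times X$ (Cao, Tian--Zhang), with $T^\e\nearrow T_{max}$ as $\e\to 0$. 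The comparison principle orders these solutions monotonically, and Proposition~\ref{lem:vanish} (together with the remark allowing one also to perturb $\mu$) controls the limit, so that $\psi^{\e,\delta,j}\to\varphi$ locally uniformly as $j\to\infty$, then $\delta\to 0$, then $\e\to 0$. On the other hand the interior a priori estimates used in \cite{ST2} --- parabolic $C^0$, the parabolic Aubin--Yau $C^2$ estimate via Tsuji's trick, parabolic Evans--Krylov and Schauder bootstrap --- bound $\|\psi^{\e,\delta,j}\|_{C^k([\sigma,T']\times K)}$ uniformly in $\e,\delta,j$ for every $K\Subset X\setminus\mathrm{Exc}(\pi)$, every $\sigma>0$ and every $T'<T_{max}$. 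Passing to the limit gives $\varphi\in C^\infty(]0,T_{max}[\times(X\setminus\mathrm{Exc}(\pi)))$, hence $\phi\in C^\infty(]0,T_{max}[\times Y_{reg})$ through the isomorphism $\pi$, which is the first bullet.

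The hard part will be making the identification of $\varphi$ with the Song--Tian solution fully rigorous: Proposition~\ref{lem:vanish} as stated perturbs only $\omega_t$, so one must check that the simultaneous regularizations $\mu\leadsto\mu_\delta$ and $\phi_0\leadsto\phi_0^j$ still produce a monotone family of classical solutions whose limit is characterized by the parabolic comparison principle of Section~2 --- essentially a stability statement for the Cauchy problem under regularization of the data. Granting this, the comparison principle of Theorem~\ref{thm:can} pins the limit down to $\varphi$, and the uniform interior estimates --- which degenerate only as $K$ meets $\mathrm{Exc}(\pi)$ or as $\sigma\to 0$, both irrelevant here --- do the rest. A conceptually cleaner alternative would be to prove a parabolic analogue of the elliptic interior regularity theorem of \cite{EGZ11} --- a bounded admissible viscosity solution of a non-degenerate complex Monge--Amp\`ere flow with smooth coefficients is smooth --- but that would require developing parabolic interior Schauder and Evans--Krylov theory on the viscosity side, which is why I would route the argument through \cite{ST2}.
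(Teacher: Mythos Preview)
Your argument for existence, uniqueness, and global continuity is essentially the paper's own, and is correct. The paper's proof is in fact even shorter than yours: rather than replacing $\chi_0+t\chi$ by a genuine family of K\"ahler forms and invoking the argument of Theorem~\ref{thm:nkrf}, it works directly with $\theta_t=\theta_0+t\chi$ and observes that since $\chi_0$ is K\"ahler on $Y$ there is $A>0$ with $-\chi\le A\chi_0$, whence $\theta_t\ge(1-At)\theta_0$ and $t\mapsto\theta_t$ is very regular in the sense of Definition~\ref{defi:oregular2}; Theorem~\ref{thm:can} then applies immediately. Your detour through $\chi_t^{\mathrm K}$ is unnecessary here (though harmless), since very regularity does not require $\theta_t$ to be K\"ahler.

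Where you do substantially more than the paper is on the smoothness claim. The paper simply does not prove the first bullet: its entire proof consists of the very-regularity check above and the sentence ``the result is thus an immediate consequence of Theorem~\ref{thm:can}'', which of course yields only continuity. The smoothness on $Y_{reg}$ is tacitly attributed to \cite{ST2}, as the paragraph following the proof makes clear (``The continuity of $\phi$ at singular points of $Y_{sing}$ is the novelty here''). Your sketch --- identifying the viscosity solution with the Song--Tian weak solution via a vanishing-viscosity/regularization argument and then importing their interior $C^\infty$ estimates --- is the natural way to fill this in, and the difficulties you flag (simultaneous regularization of $\mu$ and $\phi_0$, and the approximation of $\phi_0$ by \emph{smooth} $\theta_0$-psh functions when $\theta_0$ is only semipositive) are real but manageable. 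In short: for the point the paper actually proves, you match it; for the smoothness, you are writing out what the paper leaves to the literature.
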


\begin{proof} 
When $K_Y$ is semi-ample, we can assume $\chi \geq 0$ hence 
$t \mapsto \theta_t =\theta_0 +t\chi$ is non-decreasing.
In the general case since $\theta_0 > 0$ is K\"ahler, there exists a constant $A > 0$ such that $- \chi \leq  A \theta_0$ in $Y$.
Therefore the family $t \longmapsto \theta_t =\theta_0 +t\chi$ is very regular in the sense of Definition \ref{defi:oregular2}.
The result is thus  an immediate consequence of Theorem \ref{thm:can}.
\end{proof}

The continuity of $\f$ at singular points of $Y_{sing}$ is the novelty here:
this is the parabolic analogue of the main application of \cite{EGZ11}.

 For complex Monge-Amp\`ere flows starting from even more degenerate initial data, we refer
the reader to \cite{GZ13}, where the work of Song-Tian is extended so as to allow the  
K\"ahler-Ricci flow to be run from a positive current with zero Lelong numbers.
Our viscosity approach can also be used in this latter context to show that
the maximal solution of  the K\"ahler-Ricci flow becomes immediately smooth on $Y_{reg}$,
for $t>0$,
with globally continuous potentials on $Y$.

\section{Concluding remarks: the K\"ahler-Ricci flow over flips} \label{overflip}

The extinction time of the KRF on $Y$ can be expressed as 

$$
T_0= \sup\{ t>0, \{\omega_0\}+t K_Y\} \in \mathcal{K}(Y).
$$
Let us assume that $(Y,\{\omega_0\})$ satisfies the following assumptions: 
\begin{itemize}
 \item 
 $Y$ has terminal singularities. 
\item 
$T_0<\infty$. 
\item 
$\{\omega_{T_0}\}=\{\omega_0 \} +T_0 K_Y$ 
is {\em a non trivial pull back from a K\"ahler class}, i.e.: that there exists a non-biholomorphic proper bimeromorphic
holomorphic map $\psi^{-}: Y\to Z$  such that $Z$ is a normal K\"ahler complex space
 and
$\{\omega_0\}+T_0 K_Y \in (\psi^{-})^*\mathcal{K}(Z)$.
\item For $N\in \N^*$ divisible enough the sheaf of graded algebras  $$\mathcal{P}(Y/Z):=\bigoplus_{n\in \N}\psi^{-}_* O_Y(nN K_Y)$$
is locally finitely generated over $O_Z$. 
\end{itemize}

The last condition is fulfilled thanks to \cite[Thm 1.2 (3)]{BCHM} if $Y$ and $Z$ are projective varieties. 
We then denote by $\psi ^+: Y^+\to Z $ the relative canonical model 
of  $\psi^{-}: Y \to Z$, namely $Y^+:=\mathrm{Proj}( \mathcal{P}(Y/Z))$ . 
It is known thanks to the classical work of M. Reid that $Y^+$ is normal (and has canonical singularities)
and it is trivial to see that $\psi^+$ is a proper bimeromorphic mapping.

It follows from \cite[Lemma 3.38]{KM}\footnote{Stated for algebraic varieties. 
The proof however goes through in the complex analytic category
since $\psi^{-}$ is a projective morphism due to the fact that $-K_Y$ is $\psi^{-}$-ample.} that $Y^+$ has terminal 
singularities. Also, if $Y$, $Z$
are projective and $Y$ is  $\Q$-factorial, then $Y^+$ is $\Q$-factorial.
 One can  construct a diagram:
$$
\xymatrix@1{
& X \ar[dl]_{\pi^-} \ar[dr]^{\pi^+} & \\
Y\ar[dr]_{\psi^-} & & Y^+\ar[dl]^{\psi^+} \\
& Z & }
$$
where $X$ is smooth,  $\pi^{-}$, $\pi^{+}$ are log-resolutions
such that $\mathrm{Exc}(\pi^+)\cup \mathrm{Exc}(\pi^+)$ is a divisor with simple normal crossings,
 $\psi^+$ $\psi^{-}$ are proper bimeromorphic holomorphic map.  By construction, 
$-K_{Y}$ is $\psi^{-}$-ample, $K_{Y^+}$ is $\psi^+$-ample
and one has the following properties:
\begin{lem}

There exists a real number $\epsilon>0$ such that for $t\in ]T_0,  T_0+\epsilon[$,
$$
 \{\omega_{T_0}\}+ (t-T_0) K_{Y^{+}} \in \mathcal{K}(Y^{+}).
$$ 
 \end{lem}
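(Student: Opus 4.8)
The plan is to reduce everything to the single fact that $K_{Y^+}$ is $\psi^{+}$-ample, which is built into the construction of the relative canonical model $Y^+=\mathrm{Proj}(\mathcal P(Y/Z))$ and was already recorded in the diagram above. First I would fix notation: by the third of the standing hypotheses there is a K\"ahler class $\alpha\in\mathcal K(Z)$ with $(\psi^{-})^*\alpha=\{\omega_{T_0}\}$ on $Y$, and the statement uses the usual abuse of writing $\{\omega_{T_0}\}$ for the pull-back class $(\psi^{+})^*\alpha$ once read on $Y^+$. Thus the assertion is equivalent to
$$\textstyle (\psi^{+})^*\alpha+s\,c_1(K_{Y^+})\in\mathcal K(Y^+)\quad\text{for all sufficiently small }s>0,$$
and one may then take $\epsilon$ to be any value of $s$ for which this membership holds.

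The central step is to produce one such $s$. I would choose $N$ divisible enough that $N K_{Y^+}$ is Cartier (recall $Y^+$ has terminal, hence $\mathbb Q$-Gorenstein, singularities) and $O_{Y^+}(NK_{Y^+})$ is $\psi^{+}$-very ample, so that $\psi^{+}$ factors as a closed immersion of $Y^+$ over $Z$ into the projectivization of a coherent sheaf on $Z$, with $O_{Y^+}(NK_{Y^+})$ pulled back from the relative hyperplane bundle. Since $Z$ is a compact K\"ahler space, so is this projectivized sheaf, and the relative hyperplane bundle carries a smooth Hermitian metric whose curvature is positive along the fibres over $Z$; adding a sufficiently large multiple of the pull-back of a K\"ahler form $\omega_Z\in\alpha$ yields a genuine K\"ahler form on the total space. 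Restricting it to $Y^+$ produces a K\"ahler form representing $N\,c_1(K_{Y^+})+C\,(\psi^{+})^*\alpha$ for some constant $C>0$; equivalently, $(\psi^{+})^*\alpha+s_0\,c_1(K_{Y^+})\in\mathcal K(Y^+)$ with $s_0:=N/C$.

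Finally, for $0<s<s_0$ I would write $(\psi^{+})^*\alpha+s\,c_1(K_{Y^+})$ as $(1-s/s_0)(\psi^{+})^*\alpha+(s/s_0)\bigl((\psi^{+})^*\alpha+s_0\,c_1(K_{Y^+})\bigr)$: the first class is represented by the closed semi-positive form $(\psi^{+})^*\omega_Z$ and the second by a K\"ahler form, so their sum is a K\"ahler form on $Y^+$ and the class lies in $\mathcal K(Y^+)$; taking $\epsilon:=s_0$ completes the argument. The main obstacle is the middle step, namely the implication that $\psi^{+}$-ampleness of $K_{Y^+}$ forces $(\psi^{+})^*\alpha+s\,c_1(K_{Y^+})$ to be a K\"ahler class: when $Y^+$ and $Z$ are projective this is the classical statement that a relatively ample bundle twisted by the pull-back of an ample bundle is ample (see e.g.\ \cite{KM}), but in the general compact K\"ahler-analytic category one has to invoke both that the projectivization of a coherent sheaf over a compact K\"ahler space is again K\"ahler and that a relatively ample bundle becomes positive after adding a large pull-back of a K\"ahler form, which is a compactness argument once a fibrewise positively curved metric is at hand. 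Everything else is bookkeeping with Bott--Chern classes and the openness and convexity of the K\"ahler cone.
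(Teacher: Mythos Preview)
Your proposal is correct and follows exactly the approach of the paper, which simply states ``Immediate consequence of the fact that $K_{Y^+}$ is $\psi^+$-ample.'' You have spelled out in detail what the paper leaves implicit, including the convex-combination argument to pass from a single $s_0$ to all $0<s<s_0$ and the care needed in the K\"ahler (as opposed to projective) setting.
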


 \begin{proof}
Immediate consequence of the fact that $K_{Y^+}$ is $\psi^+$-ample.
\end{proof}

\begin{lem}
 The exceptional divisors of $\pi^-$ are 
 exceptional for $\pi^+$. 
\end{lem}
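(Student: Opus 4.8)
The plan is to chase, for an exceptional prime divisor $E\subset X$, its image through the three bimeromorphic contractions down to $Z$. Since the diagram commutes, write $\pi:=\psi^-\circ\pi^-=\psi^+\circ\pi^+\colon X\to Z$; then, with $n=\dim X$, a prime divisor $E\subset X$ is $\pi^-$-exceptional precisely when $\dim\pi^-(E)\le n-2$, and $\pi^+$-exceptional precisely when $\dim\pi^+(E)\le n-2$. What must be shown is the implication $\dim\pi^-(E)\le n-2\ \Longrightarrow\ \dim\pi^+(E)\le n-2$.

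The single structural input needed is that $\psi^-$ and $\psi^+$ are \emph{small}, i.e. isomorphisms in codimension one. This is precisely the defining property of a flipping contraction $\psi^-$ and its flip $\psi^+$, so I would record it as part of the flip construction recalled above (compare \cite{KM},\cite{BCHM}). If one wants to see it from the stated data, one may apply the Negativity Lemma \cite[Lem.~3.39]{KM} to $N:=K_{Y^+}-(\psi^+)^{*}K_Z$, which is $\psi^+$-exceptional and $\psi^+$-ample, hence $\le 0$, while it is effective because $Z$ inherits canonical (indeed terminal) singularities through the small $K_Y$-negative contraction $\psi^-$ of the terminal variety $Y$; thus $N=0$ and $\psi^+$ contracts no divisor, and symmetrically for $\psi^-$. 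But the quickest route is to invoke the smallness built into the notion of a flip.

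Granting smallness, the core of the proof is brief. Let $E$ be $\pi^-$-exceptional and suppose, towards a contradiction, that it is \emph{not} $\pi^+$-exceptional, so that $D^+:=\pi^+(E)$ is a prime divisor of $Y^+$. Since $\psi^+$ is small, $D_Z:=\psi^+(D^+)$ is a prime divisor of $Z$. But $\psi^-(\pi^-(E))=\pi(E)=\psi^+(\pi^+(E))=D_Z$, so the image $\pi^-(E)$ is dense over the $(n-1)$-dimensional variety $D_Z$ under $\psi^-$; hence $\dim\pi^-(E)\ge n-1$, which (as $\dim\pi^-(E)\le\dim E=n-1$ and $\dim Y=n$) forces $\pi^-(E)$ to be a prime divisor of $Y$ and contradicts the $\pi^-$-exceptionality of $E$. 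Exchanging the roles of $\pi^-$ and $\pi^+$ gives the reverse inclusion, so in fact $\mathrm{Exc}(\pi^-)=\mathrm{Exc}(\pi^+)$.

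I expect the smallness of $\psi^\pm$ to be the only genuine hurdle; once it is in hand the center-chasing is automatic. In the self-contained justification of smallness the delicate link is the claim that $Z$ has terminal singularities (and even that $K_Z$ is $\Q$-Cartier), which itself rests on $\psi^-$ being small; to avoid circling on this I would, in the write-up, simply take the smallness of the flipping contraction and its flip from the theory already cited in this section rather than belabour the point.
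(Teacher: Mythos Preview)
Your argument is correct in the flip setting, but it is organised differently from the paper's and rests on a slightly stronger input.

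The paper's proof is a single line: the birational map $Y^+ \dashrightarrow Y$ contracts no divisor, because $Y^+\to Z$ is the log-canonical model of $Y\to Z$ and a log-canonical model is a \emph{contraction} in the sense of \cite{BCHM}. Once this is said, the lemma is immediate: if $E$ is not $\pi^+$-exceptional then $D^+:=\pi^+(E)$ is a prime divisor on $Y^+$, and since $Y^+\dashrightarrow Y$ does not contract $D^+$, its image $\pi^-(E)$ is a divisor on $Y$, so $E$ is not $\pi^-$-exceptional.

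You instead route the comparison through $Z$: push $D^+$ down to a divisor $D_Z\subset Z$ using smallness of $\psi^+$, then argue that $\pi^-(E)$ must dominate $D_Z$ via $\psi^-$. This is fine, and the center-chase is clean. The difference is the structural input: you use ``$\psi^+$ is small'', which you justify by appealing to flip theory, whereas the paper uses ``$Y^+\dashrightarrow Y$ contracts no divisor'', which follows directly from the Proj construction of $Y^+$ as a relative canonical model and does not require $\psi^-$ itself to be small. Your input is in fact the stronger one (smallness of $\psi^+$ implies that $Y^+\dashrightarrow Y$ is non-contracting, but not conversely in general), and your own discussion of the Negativity-Lemma route correctly flags that establishing it from scratch risks circularity. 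The paper sidesteps this entirely.

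One further point: the paper states and proves only the one inclusion $\mathrm{Exc}(\pi^-)\subset\mathrm{Exc}(\pi^+)$. Your claimed equality $\mathrm{Exc}(\pi^-)=\mathrm{Exc}(\pi^+)$ genuinely needs both $\psi^\pm$ small; in the divisorial-contraction case (which the paper's formal hypotheses do not exclude, despite the section title) the reverse inclusion fails.
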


\begin{proof}
The bimeromorphic map $\xymatrix{Y^{+} \ar@{-->}[rr]^{(\psi^-)^{-1} \circ \psi^+}&&  Y}$
contracts no divisor, since a log-canonical model is a contraction and $Y^+\to Z$ is the log-canonical model of $Y\to Z$ see \cite[section 3]{BCHM}. 
\end{proof}

Furthermore,  for an exceptional divisor $E$ of $\pi^+$,  we have $a_E\le a_E(Y^+):=a_E^+$
where $a_E=0$ if $E$ is not $\pi^-$-exceptional
by \cite[Lemma 3.38]{KM} and  we define $\delta_E=a^+_E-a_E \geq 0$. 

\medskip

We define a measurable
volume form with semi positive continuous density on $X$ by 
$$
\mu= \left(\mathbb{I}_{t\le T_0}+ \mathbb{I}_{\{t> T_0\}}(\prod_E  |s_E |_{h_E}^{2\delta_E}) \right)\mu_{NKRF}(h^-)
$$
and $\bar\omega \in H^0(X, \mathcal{Z}^{1,1}_{X_{T_0+\epsilon}/[0,T_0+\epsilon[})$ by 
$$ 
\bar \omega_t= \omega_0+\int_0^t du \ (dd^c\log(\mu) - a_E[E] -\mathbb{I}_{\{u> T_0\}} \delta_E [E]).
$$

The fact that $\bar \omega$ has continuous local potentials is straightforward.
The pair $(\bar \omega,\mu)$ defines a K\"ahler Ricci flow on $Y=Y^-$ for $t<T_0$ 
and a K\"ahler Ricci flow on $Y^+$ for $t>T_0$. On the other hand 
the flow $(CMAF)_{\bar\omega, V}$ does not satisfy condition (\ref{vreg}) at $T_0$. Indeed in every coordinate system
one can find a potential in such a way that this flow has the following expression:
$$ (dd^c \phi)^n=e^{\frac{\partial \phi}{\partial t}} |z_E|^{2a_E + 2\mathbb{I}_{\{t> T_0\}} \delta_E}.
$$

We believe a large part of the theory developed here should hold in spite of the breakdown of condition (\ref{vreg}) but we
shall not treat any further this topic in the present article and hope to return to that problem in a  later work.


\end{document}